\documentclass[USenglish, a4paper, 10pt]{article}
\usepackage[utf8]{inputenc} %pour avoir un bon codage d'entrée (caractères accentués). utf-8 c'est le système de codage
\usepackage{babel} %pour respecter les usages de la langue du document (specifiée dans documentclass)
\usepackage{lmodern} %police : famille de fonts LatinModern
\usepackage[T1]{fontenc} %codage latin
\usepackage{amssymb, amsmath, mathrsfs, mathabx, bbm} %inclusion de différentes types de fonts
 %pour utiliser le style \mathbb{} avec les lettres greques. Il faut utiliser la commande \mathbbold{}
\DeclareMathAlphabet{\mathbbold}{U}{bbold}{m}{n}%pour utiliser le style \mathbb{} avec les lettres greques. Il faut utiliser la commande \mathbbold{}
\usepackage{stmaryrd}
\usepackage[a4paper]{geometry} %pour controler les dimensions et l'orientation des pages du document
	\geometry{tmargin=16mm,bmargin=16mm, lmargin=20mm,rmargin=20mm}
\setlength{\parskip}{1ex plus 0.5ex minus 0.2ex}
\usepackage{titling} %inclusion du titre, auteur et resume comme dans les articles scientifiques des revues
\usepackage{titlesec} %pour modifier les parametres des sections, chapitres, etc.
		\titlelabel{\thetitle.\quad} %ajouter un point apres le compteur des sections
		\titleformat*{\section}{\center\large} %on met les titres des section dans le centre de la page
		\titleformat*{\subsection}{\sf\large} %style de la police pour les subsections
		\titleformat*{\subsubsection}{\sf\it} %style de la police pour les subsubsections
		
\usepackage[colorlinks=true,linkcolor=black, citecolor=GreenCite,urlcolor=GreenCite]{hyperref} %inclusion de liens sur internet ou des liens dans le document lui-même. Il faut dire que l'inclusion de ce paquet implique automatiquement la hiperliens dedans le document, en particulier on peut cliquer sur les titres de la table des matières ou dans les reférences aux formules ou aux livres de la bibliographie. On écrit \url{direction web} ou \href{direction web}{NOM alternative pour le lien} Les paramètres indiqués signifient que :
		%``colorlinks=false'', on aurait un cadre dans le lien
		%``colorlinks=true'', on n'aurait un cadre, mais des couleurs
		%``linkcolor=...'', on définit la couleur des liens internes dans le document
		%``citecolor=...'', on définit la couleurs des liens aux reférences
		%``urlcolor:...'', on définit la couleurs des liens à internet
\usepackage[usenames,dvipsnames]{color} %inclusion de plusieurs couleurs à part des standars (blue, yellow, black, white, magenta, cyan, green, red)
\definecolor{ToDo}{RGB}{30,144,255}
\definecolor{Provisional}{RGB}{218,165,32}
\definecolor{Question}{RGB}{220,20,60}
\definecolor{GreenCite}{RGB}{47, 79, 79}

\usepackage{verbatim} % commentaires
\usepackage{array} %inclusion de tableaux dans le mode mathématicien
\usepackage[all]{xy} %inclusion de diagrammes (fleches, etc.)
\usepackage[usenames,dvipsnames]{color} %inclusion de plusieurs couleurs à part des standars (blue, yellow, black, white, magenta, cyan, green, red)
\usepackage{enumerate} %configuration des listes
\usepackage{leftidx}
\usepackage{paralist}

\numberwithin{equation}{section} %on numérote les équations selon la section où on se trouve
\usepackage{amsthm} %inclusion des formats pour les théorèmes, remarques, etc.
\swapnumbers %on numérote les énoncés de la forme "2.0 Theorem" au lieu de "Theorem 2.0"
 %on met un carré noir à la fin des démonstrations
\theoremstyle{plain}
\newtheorem{theoSec}{Theorem}[section] %enumération à niveau de section

\newtheorem{lemSec}[theoSec]{Lemma}
\newtheorem{proSec}[theoSec]{Proposition}
\newtheorem{corSec}[theoSec]{Corollary}
\newtheorem{defiSec}[theoSec]{Definition}
\newtheorem{remSec}[theoSec]{Remark}

\newtheorem{exsSec}[theoSec]{Examples}
\theoremstyle{remark}

\theoremstyle{plain}
\newtheorem{theo}{Theorem}[subsection] %enumération à niveau de sous-section

\newtheorem{lem}[theo]{Lemma}
\newtheorem{pro}[theo]{Proposition}

\newtheorem{defi}[theo]{Definition}
\newtheorem{rem}[theo]{Remark}

\newtheorem{ex}[theo]{Example}

\theoremstyle{remark}
\newtheorem{note}[theo]{Note}

\theoremstyle{plain}
 %enumération à niveau de sous-sous-section

\theoremstyle{remark}

%Titre, auteur, date du document:
\title{\textbf{Quantum direct products and the Künneth class}}
\author{\textsc{Rubén Martos}\thanks{Department of Mathematical Sciences, University of Copenhagen (Denmark). R.M. is supported by the European Union's Horizon 2020 research and innovation programme under the Marie Skłodowska-Curie grant agreement No 895141.}}
\date{}
\begin{document}
\maketitle
\renewcommand{\abstractname}{}
\vspace{-2.5cm}
\begin{abstract}
\textsc{Abstract}. We introduce a Künneth class in the quantum equivariant setting inspired by the pioneer work by J. Chabert, H. Oyono-Oyono and S. Echterhoff, which allows to relate the quantum Baum-Connes property with the Künneth formula by generalising some key results of Chabert-Oyono-Oyono-Echterhoff to discrete quantum groups. Finally, we make the observation that the C$^*$-algebra defining a compact quantum group with dual satisfying the strong quantum Baum-Connes property belongs to the Künneth class. This allows to obtain some K-theory computations for quantum direct products based on earlier work by Voigt and Vergnioux-Voigt.
	
\bigskip
\textsc{Keywords.} Baum-Connes conjecture, compact/Discrete quantum groups, K-theory, Künneth formula, quantum direct product, tensor product of C$^*$-algebras, torsion, triangulated categories, universal coefficient theorem.
\end{abstract}

\tableofcontents

\section{\textsc{Introduction}}
	
	Universal Coefficient Theorems in algebraic topology establish a connection between ordinary homology (resp. cohomology) with homology (resp. cohomology) with coefficients. In noncommutative geometry, C$^*$-algebras are viewed as noncommutative analogues of topological spaces and as such it is reasonable to extend ideas from algebraic topology to a noncommutative framework. In this sense, K-theory of C$^*$-algebras is viewed as a homology theory for C$^*$-algebras, which provides important invariants for C$^*$-algebras (e.g. AF-algebras are completely classified through K-theory \cite{ElliotAF}). Dually, K-homology for C$^*$-algebras is a cohomology theory for C$^*$-algebras, which agrees with the $\text{Ext}$-functor in the commutative unital case. K-theory and K-homology can be related to each other by means of the index theory of elliptic pseudo-differential operators (e.g. the Atiyah-Singer's theorem). Kasparov KK-theory is in turn a bivariant K-theory, which gathers together both K-theory and K-homology of C$^*$-algebras. KK-theory plays a central role in the classification program of C$^*$-algebras, but it is also relevant to approach diverse problems outside noncommutative geometry, e.g. the Novikov conjecture.
	
	The Universal Coefficient Theorem (\emph{UCT} for short) of J. Rosenberg and C. Schochet \cite{RosenbergSchochet} approximates the bivariant K-theory of Kasparov in terms of ordinary K-theory. More precisely, given two (separable) C$^*$-algebras $A$ and $B$, there exists a short exact sequence:
	$$\text{Ext}^1_{\mathbb{Z}}(K_*(A), K_*(B))\rightarrowtail KK(A, B)\twoheadrightarrow \text{Hom}(K_*(A), K_*(B)),$$
	provided that $A$ belongs to certain bootstrap class. A remarkable consequence of this is that K-equivalences between C$^*$-algebras satisfying the UCT lift to KK-equivalences. A classical companion to the UCT is the \emph{Künneth theorem} (cf. \cite{RosenbergSchochet}), which asserts that given two (separable) C$^*$-algebras $A$ and $B$, there exists a short exact sequence:
	$$K_*(A)\otimes K_*(B)\rightarrowtail K_*(A\otimes B)\twoheadrightarrow \text{Tor}^{\mathbb{Z}}_1(K_*(A), K_*(B)),$$
	provided that $A$ belongs to certain bootstrap class. Roughly speaking, Künneth theorem allows to compute the K-theory of a tensor product of two C$^*$-algebras in terms of the K-theory of the corresponding factors (e.g. when $K(A)$ or $K(B)$ is torsion-free). When the the first arrow in the above diagram is an isomorphism, we refer to $K_*(A)\otimes K_*(B)\cong K_*(A\otimes B)$ as the \emph{Künneth formula}. Classically, the Künneth formula identifies the (co-)homology of a product of topological spaces with the tensor product of (co-)homologies.
	
	In \cite{ChabertEchterhoffOyono}, J. Chabert, S. Echterhoff and H. Oyono-Oyono establish a connection between the Baum-Connes conjecture for a locally compact group $G$ with coefficients in a C$^*$-algebra $A$ and the Künneth formula for the K-theory of tensor products by the corresponding crossed product $A\underset{r}{\rtimes} G$. One remarkable application of the techniques developed by J. Chabert, S. Echterhoff and H. Oyono-Oyono is a permanence property of the Baum-Connes conjecture for a direct product of locally compact groups with trivial coefficients.
	
	In this article, we establish such a connection in analogy to \cite{ChabertEchterhoffOyono} in the realm of discrete quantum groups and prove a similar permanence property for the quantum counterpart of the Baum-Connes conjecture. It is important to say that such a study was partially initiated by the author in \cite{RubenSemiDirect}, but only under torsion-freeness assumption. This was due mainly to two technical difficulties: the classification of torsion actions for quantum direct products and the formulation of a quantum assembly map for arbitrary discrete quantum groups. Let us give more details about these issues.
	
	The Baum-Connes conjecture has been formulated in 1982 by P. Baum and A. Connes. We still do not know any counter example to the original conjecture but it is known that the one with coefficients is false. For this reason we refer to the Baum-Connes conjecture with coefficients as the \emph{Baum-Connes property} (\emph{BC property} for short). The conjecture aims to understand the link between two operator K-groups of different nature, which would establish a strong connection between geometry and topology in a more abstract and general index-theory context. More precisely, if $G$ is a (second countable) locally compact group and $A$ is a (separable) $G$-$C^*$-algebra, then the BC property for $G$ with coefficients in $A$ claims that the assembly map $\mu^G_A: K^{top}_{*}(G; A)\longrightarrow K_{*}(A\underset{r}{\rtimes}G)$ is an isomorphism, where $K^{top}_{*}(G; A)$ is the equivariant K-homology with compact support of $G$ with coefficients in $A$ and $K_{*}(A\underset{r}{\rtimes}G)$  is the K-theory of the reduced crossed product $A\underset{r}{\rtimes}G$. This property has been proved for a large class of groups (e.g. a-T-menable groups \cite{HigsonKasparovHaagerup} or hyperbolic groups \cite{Lafforgue}).
	
	On the one hand, a major problem when studying the quantum counterpart of the BC property for a discrete quantum group $\widehat{\mathbb{G}}$ is the torsion structure of $\widehat{\mathbb{G}}$. Indeed, if $G$ is a discrete group, its torsion phenomenon is completely described in terms of the finite subgroups of $G$ and encoded in the localizing subcategory of $\mathscr{KK}^{G}$ of \emph{compactly induced $G$-$C^*$-algebras}, denoted by $\mathscr{L}_G$, according to the Meyer-Nest reformulation \cite{MeyerNest}. We say that $G$ satisfies the \emph{strong} BC property if $\mathscr{L}_G=\mathscr{KK}^{G}$, which corresponds, in usual terms, to the existence of a $\gamma$-element that equals $\mathbbold{1}_{\mathbb{C}}$ (cf. \cite{MeyerNest}). This approach yields as well a characterization of the BC property for a discrete group $G$ \emph{only} in terms of finite subgroups, K-theory and crossed products. The notion of torsion for a genuine discrete quantum group, $\widehat{\mathbb{G}}$, has been introduced firstly by R. Meyer and R. Nest \cite{MeyerNestTorsion}, \cite{MeyerNestHomological2} in terms of ergodic actions of $\mathbb{G}$. It has been re-interpreted later by Y. Arano and K. De Commer in terms of fusion rings and module $C^*$-categories \cite{YukiKenny}.
	
	Given a compact quantum group $\mathbb{G}$, an important question in this context is to classify all torsion actions of $\mathbb{G}$ (at least up to equivariant Morita equivalence). Such a classification is known for the most common examples of compact quantum groups, e.g for the q-deformation of $SU(2)$, the free unitary quantum group $U^+(n)$, the free orthogonal quantum group $O^+(n)$ or the quantum permutation group $S^+_N$ with $n, N\in\mathbb{N}$ (cf. \cite{VoigtBaumConnesFree, VoigtBaumConnesAutomorphisms, YukiKenny}). In fact, $\widehat{SU_q(2)}$, $\widehat{U^+(n)}$ and $\widehat{O^+(n)}$ are all torsion-free and the only, up to equivariant Morita equivalence, non-trivial torsion action of $S^+_N$ is its structural action as quantum automorphism group of $\mathbb{C}^N$. However, little is known about the classification of torsion actions for general constructions of quantum groups. In this direction, the only, to the best knowledge to the author, such a general result was given in \cite{RubenAmauryTorsion} by the author in collaboration with A. Freslon. It is shown in \cite{RubenAmauryTorsion} that torsion actions of a free product of compact quantum groups are in one-to-one correspondence, up to equivariant Morita equivalence, with torsion actions of each of the factors. 
	
	It is important to mention that by virtue of the work \cite{YukiKenny} by Y. Arano and K. De Commer, we know that if both $\widehat{\mathbb{G}}$ and $\widehat{\mathbb{H}}$ are torsion-free, then $\widehat{\mathbb{G}\times \mathbb{H}}$ is torsion-free too. The converse is also true because both $\widehat{\mathbb{G}}$ and $\widehat{\mathbb{H}}$ can be viewed as \emph{divisible} discrete quantum subgroups of $\widehat{\mathbb{G}\times \mathbb{H}}$ and torsion-freeness is preserved under divisible discrete quantum subgroups as shown in \cite{RubenTorsionDivisibles}. However, it is an open problem to classify all torsion actions of $\mathbb{G}\times\mathbb{H}$. For example, if $\mathbb{G}=:G$ and $\mathbb{H}=:H$ are classical discrete groups, then it is well-known that the subgroups of their direct product $G\times H$ are \emph{not} always of the form $G'\times H'$ with $G'\leq G$ and $H'\leq H$ subgroups. It suffices to take $G=H$ and consider $G$ as a subgroup of $G\times G$ embedded diagonally. The general classification of the subgroups of a direct product of groups is given by the celebrated \emph{Goursat's lemma} (cf. \cite{Goursat}). In particular, not all finite subgroups of $G\times H$ are classified as direct product of finite subgroups of each factor. This already indicates that the classification of torsion actions of a quantum direct product $\mathbb{G}\times\mathbb{H}$ in terms of torsion actions of $\mathbb{G}$ and torsion actions of $\mathbb{H}$ would be a non trivial task.
	
	One could restrict attention to \emph{projective torsion}. A (non-trivial) projective torsion action of a compact quantum group $\mathbb{G}$ is given by a simple finite dimensional C$^*$-algebra equipped with an ergodic action of $\mathbb{G}$ (not equivariantly Morita equivalent to $\mathbb{C}$). It is shown in \cite{KennyNestRubenBCProjective} by the author in collaboration with K. De Commer and R. Nest that these actions by $\mathbb{G}$ are in bijective correspondence with (measurable) $2$-cocycles on $\mathbb{G}$, i.e. with twisted representations of $\mathbb{G}$. This is in complete analogy with the situation of classical compact groups. Accordingly, the classification of projective torsion of a quantum direct product $\mathbb{G}\times \mathbb{H}$ turns into the classification of (measurable) $2$-cocycles on $\mathbb{G}\times \mathbb{H}$. In the classical setting of discrete groups, such a classification goes back to work by K. Tahara \cite{Tahara}. The author has initiated an investigation in this direction in order to classify projective torsion of a quantum direct product, which will be the subject of a subsequent paper.
	
	On the other hand, in order to apply the Meyer-Nest strategy in the quantum setting, one needs a \emph{complementary pair} of localizing subcategories, $(\mathscr{L}_{\widehat{\mathbb{G}}},\mathscr{N}_{\widehat{\mathbb{G}}})$, where $\mathscr{L}_{\widehat{\mathbb{G}}}$ must encode the torsion phenomenon of $\widehat{\mathbb{G}}$. A candidate was proposed in \cite{MeyerNestTorsion} and \cite{VoigtBaumConnesAutomorphisms} for specific examples (see also \cite[Section 4.1.2]{RubenThesis} for a description for general discrete quantum groups), but it has been an open question whether the corresponding pair is complementary in $\mathscr{KK}^{\widehat{\mathbb{G}}}$, which prevented from having a definition of a quantum assembly map whenever $\widehat{\mathbb{G}}$ is not torsion-free. Recently, Y. Arano and A. Skalski \cite{YukiBCTorsion} have observed that the candidates for $\mathscr{L}_{\widehat{\mathbb{G}}}$ and $\mathscr{N}_{\widehat{\mathbb{G}}}$ form indeed a complementary pair of subcategories in $\mathscr{KK}^{\widehat{\mathbb{G}}}$, which allows to define a quantum assembly map for every discrete quantum group $\widehat{\mathbb{G}}$ (torsion-free or not). Moreover, following a different approach by studying the projective representation theory of a compact quantum group, the same conclusion is reached for permutation torsion-free discrete quantum groups by the author in collaboration with K. De Commer and R. Nest \cite{KennyNestRubenBCProjective}. More details about this formulation can be found in Section \ref{sec.QuantumBC}.
	\bigskip
	
	In the present paper we are able to improve several results in this matter appearing already in \cite{RubenSemiDirect}. For instance, the functor $\mathcal{Z}:\mathscr{KK}^{\widehat{\mathbb{G}}}\times \mathscr{KK}^{\widehat{\mathbb{H}}} \rightarrow \mathscr{KK}^{\widehat{\mathbb{G}\times \mathbb{H}}}$ given by the exterior tensor product of Kasparov triples allows to describe appropriately, \emph{and without any torsion-freeness assumption}, the quantum BC property for $\widehat{\mathbb{G}\times \mathbb{H}}$ in terms of the quantum BC property for $\widehat{\mathbb{G}}$ and $\widehat{\mathbb{H}}$. Namely, if $\widehat{\mathbb{G}}$ and $\widehat{\mathbb{H}}$ satisfy the strong quantum BC property, then we show that $\widehat{\mathbb{G}\times \mathbb{H}}$ satisfies the BC property with coefficients in $A\otimes B$, for all $\widehat{\mathbb{G}}$-C$^*$-algebra $A$ and all $\widehat{\mathbb{H}}$-C$^*$-algebra $B$ (cf. Theorem \ref{theo.StrongBCDirectProd}).  Accordingly, an analogous assertion about the \emph{usual} quantum BC property needs further hypothesis related to the Künneth formula in order to compute the K-theory of a tensor product. Therefore, we are lead to consider a (quantum) equivariant analogue of the Künneth class $\mathcal{N}$, say $\mathcal{N}_{\widehat{\mathbb{G}}}$, containing those $\widehat{\mathbb{G}}$-C$^*$-algebras which make possible such a K-theory computation. This is done in analogy to the work by J. Chabert, S. Echterhoff and H. Oyono-Oyono in \cite{ChabertEchterhoffOyono}. If $\widehat{\mathbb{G}}$ is a classical locally compact group $G$, then $\mathcal{N}_{\widehat{\mathbb{G}}}=\mathcal{N}_G$ as defined in \cite{ChabertEchterhoffOyono}. Furthermore, our approach, as based in the Meyer-Nest categorical framework, yields a characterisation of the objects in the equivariant Künneth class in terms of the non-equivariant one, up to replacing $A$ by a $\mathscr{L}_{G}$-simplicial approximation of $A$.  This study is contained in Section \ref{sec.KunnethFunctors}.
	
	In Section \ref{sec.BCKunneth} we generalise some key results appearing in \cite{ChabertEchterhoffOyono} about the connection between the BC property with the Künneth formula. Namely, let $A$ be a $\widehat{\mathbb{G}}$-C$^*$-algebra and $B$ a C$^*$-algebra. Then we show that $A\in \mathcal{N}_{\widehat{\mathbb{G}}}$ $\Leftrightarrow$ $A\underset{r}{\rtimes}\widehat{\mathbb{G}}\in\mathcal{N}$ provided that $\widehat{\mathbb{G}}$ satisfies the BC property with coefficients in $A\otimes B$ (cf. Proposition \ref{pro.BCKunneth}). One remarkable result in \cite{ChabertEchterhoffOyono} is the following permanence property of the BC property for a direct product of locally compact groups. Let $G$ and $H$ be two locally compact groups satisfying the BC property with trivial coefficients. If $C^*(G)$ or $C^*(H)$ belongs to $\mathcal{N}$, then $G\times H$ satisfies the BC property with trivial coefficients (cf. \cite[Theorem 5.3]{ChabertEchterhoffOyono}). The analogue statement for quantum groups is stated and proven in Theorem \ref{theo.BCDirectProducts}. In the quantum setting further hypotheses are needed concerning the behaviour of $\mathbb{C}$ with respect to the \emph{equivariant} Künneth formula. Namely, we have to assume that $\mathbb{C}\in\mathcal{N}_{\widehat{\mathbb{G}}}$. This supplementary condition is automatically fulfilled in the classical setting because $\mathbb{C}$ is a type I C$^*$-algebra and $\mathcal{N}_G$ contains all type I $G$-C$^*$-algebras by virtue of \cite[Theorem 0.1]{ChabertEchterhoffOyono}. In the quantum setting, a similar related result is Theorem \ref{theo.QuantumGroupCalgebraKunneth}. However, to the best knowledge of the author, it is not known for instance whether $\mathbb{C}\in\mathcal{N}_{\widehat{\mathbb{G}}}$ for every discrete quantum group $\widehat{\mathbb{G}}$. One reason for this is that in our approach the objects in $\mathcal{N}_{\widehat{\mathbb{G}}}$ are characterised in terms of objects in $\mathcal{N}$ up to a \emph{$\mathscr{L}_{\widehat{\mathbb{G}}}$-simplicial approximation}, which entails to study the localisation functor $L$ in relation with crossed products and the \emph{equivariant} Künneth class (cf. Remark \ref{rem.AnalogoyThm01} for an extended discussion). One possibility to do so might be to adapt the \emph{Going-Down technique} from \cite{ChabertEchterhoffOyono} based on Theorem \ref{theo.QuantumGroupCalgebraKunneth}.
	
	Finally, we make the observation that \cite[Theorem 5.2]{YukiBCTorsion} and \cite[Corollary 5.5]{YukiBCTorsion} can be also obtained for the Künneth class instead of the bootstrap class (cf. Theorem \ref{theo.QuantumGroupCalgebraKunneth}). In particular, one obtains that $C(\mathbb{G})\in\mathcal{N}$ as soon as $\widehat{\mathbb{G}}$ satisfies the \emph{strong} quantum BC property. In the classical setting, one can argue as follows. If $G$ satisfies the BC property with coefficients (\emph{a fortriori} when $G$ satisfies the \emph{strong} BC property), then we can apply \cite[Proposition 4.9]{ChabertEchterhoffOyono} (cf. Proposition \ref{pro.BCKunneth} for the quantum counterpart). In particular, since we always have $\mathbb{C}\in\mathcal{N}_G$ as explained above, then \cite[Proposition 4.9]{ChabertEchterhoffOyono} implies that $C^*(G)=\mathbb{C}\underset{r}{\rtimes} G\in\mathcal{N}$. This observation allows to put the Künneth formula to work by computing K-theory groups of the C$^*$-algebras defining quantum direct products in relevant examples based, for instance, on works by Voigt and Vergnioux-Voigt (cf. \cite{VoigtBaumConnesFree}, \cite{VoigtBaumConnesUnitaryFree}, \cite{VoigtBaumConnesAutomorphisms}). See Section \ref{sec.KTheoryComp}.
	
	\bigskip
\textsc{Acknowledgements.} The author is grateful to the anonymous referee for their valuable comments.

\section{\textsc{Preliminaries}}
	\subsection{Notations and conventions}\label{sec.NotationsConventions}
	Let us fix the notations and the conventions that we use throughout the whole article.
	
	Whenever $\mathscr{C}$ denotes a category, we shall assume that $\mathscr{C}$ is essentially small, so morphisms $Hom_{\mathscr{C}}(\cdot, \cdot)$ form sets. Given a category $\mathscr{C}$, we denote by $\mathscr{C}^{op}$ its opposite category. 
	We say that $\mathscr{C}$ is \emph{countable additive} if it is additive and it admits \emph{countable direct sums}. If $F$ is an \emph{additive} functor on an additive category, it is, by definition, compatible with \emph{finite} direct sums. The categories considered in the present paper are assumed to be countable additive. Whenever we require an additive functor to be compatible with \emph{infinite (countable)} direct sums, it will be explicitly indicated. We denote by $\mathscr{A}b$ the abelian category of abelian groups and by $\mathscr{A}b^{\mathbb{Z}/2}$ the abelian category of $\mathbb{Z}/2$-graded abelian groups. We denote by $\text{C}^*\text{-Alg}$ (resp. $\widehat{\mathbb{G}}\text{-C}^*\text{-Alg}$, where $\mathbb{G}$ is a compact quantum group) the category of separable C$^*$-algebras (resp. $\widehat{\mathbb{G}}\text{-C}^*$-algebras) with $*$-homomorphisms (resp. $\widehat{\mathbb{G}}$-equivariant $*$-homomorphisms) as morphisms. We write $\mathscr{D}\subset \mathscr{C}$ whenever $\mathscr{D}$ is a (full) subcategory of $\mathscr{C}$. We use the symbol $0$ to denote either the trivial (abelian) group, the trivial C$^*$-algebra, the trivial category or the zero object of an additive category. The context will distinguish the specific situation.
	
	If $E$ is a $\mathbb{C}$-vector space and $\mathcal{S}$ is a subset of vectors of $E$, then we write $span\, \mathcal{S}$ for the corresponding $\mathbb{C}$-vector subspace generated by $\mathcal{S}$. If $(E,||\cdot||)$ is a normed $\mathbb{C}$-vector space and $\mathcal{S}\subset E$ we write $[span\, \mathcal{S}]:=\overline{span}\,\mathcal{S}$.
	
	Let $H$ be a Hilbert space. We denote by $\mathcal{B}(H)$ (resp.\ $\mathcal{K}(H)$) the space of all linear bounded (resp.\ compact) operators on $H$. If $A$ is a C$^*$-algebra and $H$ a  Hilbert $A$-module, we denote by $\mathcal{L}_{A}(H)$ (resp.\ $\mathcal{K}_A(H)$) the space of all (resp.\ compact) adjointable operators on $H$. Hilbert $A$-modules are considered to be \emph{right $A$-modules}, so that the corresponding inner products are considered to be conjugate-linear on the left and linear on the right.
	
	All our C$^*$-algebras (except for obvious exceptions such as multiplier C$^*$-algebras and von Neumann algebras) are supposed to be \emph{separable} and all our Hilbert modules are supposed to be \emph{countably generated}. If $A$ is a C$^*$-algebra and $\mathcal{S}$ is a subset of elements in $A$, we write $C^*\langle \mathcal{S}\rangle := C^*\langle \mathcal{S}\cup \mathcal{S}^*\rangle $ for the corresponding C$^*$-subalgebra of $A$ generated by $\mathcal{S}$, that is, the intersection of all C$^*$-subalgebras of $A$ containing $\mathcal{S}$. The symbol $\otimes$ stands for the minimal tensor product of C$^*$-algebras and the exterior tensor product of Hilbert modules depending on the context. The symbol $\underset{\max}{\otimes}$ stands for the maximal tensor product of C$^*$-algebras. In any of the previous cases, the \emph{elementary tensors} in the corresponding tensor product are denoted simply by $\otimes$ and the context will distinguish the specific situation. If $A$ and $B$ are two C$^*$-algebras, $\Sigma:A\otimes B\longrightarrow B\otimes A$ denotes the flip map. The symbol $\Sigma$ is used as well for the suspension functor in the framework of triangulated categories. The context will distinguish the specific situation. We use systematically the leg numbering notation, so if $H$ is a Hilbert space then $X_{12} = X\otimes 1 \in \mathcal{B}(H^{\otimes 3})$ for $X\in \mathcal{B}(H^{\otimes 2})$, etc.
	
	If $S, A$ are C$^*$-algebras, we denote by $M(A)=\mathcal{L}_A(A)$ the multiplier algebra of $A$ and we put $\widetilde{M}(A\otimes S):=\{x\in M(A\otimes S)\ |\ x(id_A\otimes S)\subset A\otimes S\mbox{ and } (id_A\otimes S)x\subset A\otimes S\}$, which contains $A\otimes M(S)$. If $H$ is a Hilbert $A$-module, we put $M(H):=\mathcal{L}_A(A, H)$, which contains canonically $H\cong \mathcal{K}_A(A, H)$. We put $\widetilde{M}(H\otimes S):=\{X\in M(H\otimes S)\ |\ X(id_A\otimes S)\subset H\otimes S\mbox{ and } (id_H\otimes S)X\subset H\otimes S\}$, which contains $H\otimes M(S)$.
	
	If $\mathbb{G}=(C(\mathbb{G}), \Delta)$ is a compact quantum group, the set of all unitary equivalence classes of irreducible unitary finite dimensional representations of $\mathbb{G}$ is denoted by $\text{Irr}(\mathbb{G})$. The trivial representation of $\mathbb{G}$ is denoted by $\epsilon$. If $x\in \text{Irr}(\mathbb{G})$ is such a class, we write $u^x\in\mathcal{B}(H_x)\otimes C(\mathbb{G})$ for a representative of $x$ and $H_x$ for the finite dimensional Hilbert space on which $u^x$ acts (we write $dim(x):= n_x$ for the dimension of $H_x$). The matrix coefficients of $u^x$ with respect to an orthonormal basis $\{\xi^x_1,\ldots, \xi^x_{n_x}\}$ of $H_x$ are defined by $u^x_{ij}:=(\omega_{ij}\otimes id)(u^x)$ for all $i,j=1,\ldots, n_x$. The linear span of matrix coefficients of all finite dimensional representations of $\mathbb{G}$ is denoted by $\text{Pol}(\mathbb{G})$, which is a Hopf $*$-algebra with co-multipliction $\Delta$ and co-unit and antipode denoted by $\varepsilon_\mathbb{G}$ and $S_\mathbb{G}$, respectively. Given $x,y\in \text{Irr}(\mathbb{G})$, the tensor product of $x$ and $y$ is denoted by $x\otimes y$.
	
	The Haar state of $\mathbb{G}$ is denoted by $h_{\mathbb{G}}$. The GNS construction corresponding to $h_{\mathbb{G}}$ is denoted by $(L^2(\mathbb{G}), \lambda, \xi_{\mathbb{G}})$. We also write $\Lambda(x) = \lambda(x)\xi_{\mathbb{G}}$ for $x\in C(\mathbb{G})$. We adopt the standard convention for the inner product on $L^2(\mathbb{G})$, which means that $\langle \Lambda(x), \Lambda(y)\rangle:=h_{\mathbb{G}}(x^*y)$ for all $x,y\in C(\mathbb{G})$. We suppress the notation $\lambda$ in computations so that we simply write $x\Lambda(y)=\Lambda(xy)$ for all $x,y\in C(\mathbb{G})$. We will make the standing assumption that $h_{\mathbb{G}}$ is faithful, so we only work with the reduced form of a compact quantum group, hence $C_r(\mathbb{G})=C(\mathbb{G})$ unless the contrary is specified. The maximal form of $\mathbb{G}$ is given by the C$^*$-envelopping algebra of $\text{Pol}(\mathbb{G})$, denoted by $C_m(\mathbb{G})$.
	
	The Haar state extends uniquely to a normal faithful state on $L^{\infty}(\mathbb{G})$, and we denote by $J_{\mathbb{G}}$ the associated modular conjugation on $L^2(\mathbb{G})$. Let $I_0$ be the anti-linear involutive map $\Lambda(\text{Pol}(\mathbb{G})) \rightarrow L^2(\mathbb{G})$ defined by $\Lambda(x) \mapsto \Lambda(S(x)^*)$ for $x\in \text{Pol}(\mathbb{G})$. Then $I_0$ is closeable, and we denote $I = \widehat{J}_{\mathbb{G}}|I|$ for the polar decomposition of its closure. The map $R(x)=\widehat{J}_{\mathbb{G}}x^* \widehat{J}_{\mathbb{G}}$, for all $x\in C(\mathbb{G})$, is a well-defined anti-multiplicative and anti-co-multiplicative map on $C(\mathbb{G})$ preserving $\text{Pol}(\mathbb{G})$, called \emph{unitary antipode}. We put $U_{\mathbb{G}} = J_{\mathbb{G}}\widehat{J}_{\mathbb{G}} = \widehat{J}_{\mathbb{G}}J_{\mathbb{G}}\in \mathcal{B}(L^2(\mathbb{G}))$ for the symmetry of the standard Kac system associated to $\mathbb{G}$. The we put $\rho(a) := U_{\mathbb{G}}\lambda(a)U_{\mathbb{G}}$, for all $a\in C(\mathbb{G})$.
	
	Given a compact quantum group $\mathbb{G}$, we put $c_0(\widehat{\mathbb{G}}):= [\{(id\otimes \eta)(V_{\mathbb{G}})\ |\ \eta\in \mathcal{B}(L^2(\mathbb{G}))_*\}]\subset \mathcal{B}(L^2(\mathbb{G}))$, where $V_\mathbb{G}$ denotes the right regular representation of $\mathbb{G}$ on $L^2(\mathbb{G})$. Recall that $c_0(\widehat{\mathbb{G}})$ is a C$^*$-algebra which defines a locally compact quantum group with co-multiplication $\widehat{\Delta}(x) :=\Sigma V_{\mathbb{G}}^*(1\otimes x)V_{\mathbb{G}}\Sigma $, for all $x\in c_0(\widehat{\mathbb{G}})$. There exists a natural isomorphism $c_0(\widehat{\mathbb{G}})\cong \underset{x\in Irr(\mathbb{G})}{\bigoplus^{c_0}} \mathcal{B}(H_x)$. We denote the identity map by $\widehat{\lambda}: c_0(\widehat{\mathbb{G}}) \rightarrow \mathcal{B}(L^2(\mathbb{G}))$.
	
	If $\mathbb{H}$ is another compact quantum group, we say that $\widehat{\mathbb{H}}$ is a discrete quantum subgroup of $\widehat{\mathbb{G}}$ if one (hence all) of the following conditions hold: $i)$ $Pol(\mathbb{H})$ is a Hopf $*$-subalgebra of $\text{Pol}(\mathbb{G})$, $ii)$ $C_r(\mathbb{H})\overset{\iota}{\subset} C_r(\mathbb{G})$ such that $\iota$ intertwines the co-multiplications, $iii)$ $C_m(\mathbb{H})\overset{\iota}{\subset} C_m(\mathbb{G})$ such that $\iota$ intertwines the co-multiplications; $iv)$ $\mathscr{R}\text{ep}(\mathbb{H})$ is a full subcategory of $\mathscr{R}\text{ep}(\mathbb{G})$ containing the trivial representation and stable by direct sums, tensor product and adjoint operations. See \cite{SoltanSubgroups} for more details. In this case we write $\widehat{\mathbb{H}}<\widehat{\mathbb{G}}$. Note that in this case we have $\epsilon:=\epsilon_\mathbb{G}=\epsilon_{\mathbb{H}}$. The trivial quantum subgroup of $\widehat{\mathbb{G}}$ is denoted by $\mathbb{E}$.

	\subsection{Actions of quantum groups}\label{sec.ActionsQG}
	In this section we recall some terminology about actions of quantum groups. We refer to \cite{KennyActions} for more details. %and the corresponding spectral theory. We refer to \cite{KennyActions}, \cite{BichonVaesRijdt} for more details.
	\begin{defi}
			Let $\mathbb{G}=(C(\mathbb{G}),\Delta)$ be a compact quantum group. A right $\mathbb{G}$-C$^*$-algebra is a C$^*$-algebra $A$ together with an injective non-degenerate $*$-homomorphism $\alpha: A\longrightarrow A\otimes C(\mathbb{G})$ such that: $i)$ $(\alpha\otimes id_{C(\mathbb{G})})\circ\alpha = (id_A\otimes \Delta)\circ\alpha$ and $ii)$ $[\alpha(A)(1\otimes C(\mathbb{G}))]=A\otimes C(\mathbb{G})$. Such homomorphism is called a \emph{right action of $\mathbb{G}$ on $A$} or a \emph{right co-action of $C(\mathbb{G})$ on $A$}.	
	\end{defi}
	
	Similarly, we can define a \emph{left} action of $\mathbb{G}$ on $A$ (or a \emph{left} co-action of $C(\mathbb{G})$ on $A$) as a non-degenerate $*$-homomorphism $\alpha: A\longrightarrow C(\mathbb{G})\otimes A$ satisfying the analogous properties of the preceding definition. In the present article, an action of a compact quantum group $\mathbb{G}$ is supposed to be a \emph{right} one unless the contrary is explicitly indicated. Hence, we refer to such actions simply as \emph{actions of $\mathbb{G}$}. Observe however that if $(A, \alpha)$ is a \emph{right} $\mathbb{G}$-C$^*$-algebra, then $(A^{op}, \overline{\alpha})$ is a \emph{left} $\mathbb{G}$-C$^*$-algebra where $A^{op}$ denotes the opposite C$^*$-algebra of $A$ and 
$\overline{\alpha}: A^{op}\longrightarrow C(\mathbb{G})\otimes A^{op}$ is defined by $\overline{\alpha}:=(R\otimes id)\circ\Sigma\circ\alpha$, where $R$ denotes the unitary antipode of $\mathbb{G}$.

	We also recall the notion of action for discrete quantum groups for the sake of completeness. 
	\begin{defi}
		Let $\mathbb{G}=(C(\mathbb{G}),\Delta)$ be a compact quantum group. A right $\widehat{\mathbb{G}}$-C$^*$-algebra is a C$^*$-algebra $A$ together with an injective non-degenerate $*$-homomorphism $\alpha: A\longrightarrow \widetilde{M}(A\otimes c_0(\widehat{\mathbb{G}}))$ such that: $i)$ $(\alpha\otimes id_{c_0(\widehat{\mathbb{G}})})\circ\alpha = (id_A\otimes \widehat{\Delta})\circ\alpha$ and $ii)$ $[\alpha(A)(1\otimes c_0(\widehat{\mathbb{G}}))]=A\otimes c_0(\widehat{\mathbb{G}})$. Such homomorphism is called a \emph{right action of $\widehat{\mathbb{G}}$ on $A$} or a \emph{right co-action of $c_0(\widehat{\mathbb{G}})$ on $A$}.
		\end{defi}
	Again, one has the analogous notion of a left action of $\widehat{\mathbb{G}}$.  In the following, an action of a discrete quantum group $\widehat{\mathbb{G}}$ is supposed to be a \emph{right} one unless the contrary is explicitly indicated.
	\bigskip

	We also recall the notion of equivariant Hilbert module with respect to a compact quantum group for the sake of completeness.
	\begin{defi}\label{defi.EquivariantModule}
		Let $\mathbb{G}$ be a compact quantum group and $(A, \delta)$ a $\mathbb{G}$-C$^*$-algebra. A right $\mathbb{G}$-equivariant Hilbert $A$-module is a right $A$-module $E$ together with an injective linear map $\delta_E: E\longrightarrow E\otimes C(\mathbb{G})$ such that: $i)$ $\delta_E(\xi\cdot a)=\delta_E(\xi)\circ\delta(a)$ for all $\xi\in E$ and $a\in A$; $ii)$ $\delta\big(\langle \xi, \eta\rangle\big)=\langle \delta_E(\xi), \delta_E(\eta) \rangle$ for all $\xi, \eta\in E$; $iii)$ $(\delta_E\otimes id)\circ \delta_E=(id\otimes \Delta)\circ \delta_E$; $iv)$ $[\delta_E(E)(A\otimes C(\mathbb{G}))]=E\otimes C(\mathbb{G})$. Such map is called a \emph{right action of $\mathbb{G}$ on $E$} or a \emph{right co-action of $C(\mathbb{G})$ on $E$}.
	\end{defi}
	\begin{rem}\label{rem.RepGEquivHilbSpace}
		The notion of representation of a compact quantum group can be viewed alternatively as follows. If $u\in\mathcal{B}(H)\otimes C(\mathbb{G})$ is a (finite dimensional) unitary representation of $\mathbb{G}$ on a (finite dimensional) Hilbert space $H$, then the map $\delta_H: H\rightarrow H\otimes C(\mathbb{H})$, $\xi\mapsto \delta_H(\xi):=u(\xi\otimes id)$, turns $H$ into a $\mathbb{G}$-equivariant Hilbert space. Conversely, if $(H, \delta_H)$ is a (finite dimensional) $\mathbb{G}$-equivariant Hilbert space, then the unitary $u\in\mathcal{B}(H)\otimes C(\mathbb{G})$ defined by $u(\xi\otimes 1)=\delta_H(\xi)$, for all $\xi\in H$ is a (finite dimensional) unitary representation of $\mathbb{G}$ on $H$.
	\end{rem}
	\begin{defi}\label{defi.EquivariantModule}
		Let $\mathbb{G}$ be a compact quantum group and $(A, \delta)$ a $\mathbb{G}$-C$^*$-algebra. Let $(E, \delta_E)$ be a $\mathbb{G}$-equivariant Hilbert $A$-module. We say that $E$ is irreducible if the space of equivariant adjointable operators of $E$, $\mathcal{L}_{\mathbb{G}}(E):=\{T\in\mathcal{L}_A(E)\ |\ \delta_E(T(\xi))=(T\otimes 1)\delta_E(\xi)\mbox{, for all $\xi\in E$}\},$ is one-dimensional.
	\end{defi}
	\begin{rem}\label{rem.AdmissibleUnitary}
		If $(E, \delta_E)$ is a $\mathbb{G}$-equivariant Hilbert $A$-module as above, then $\mathcal{K}_A(E)$ is a $\mathbb{G}$-C$^*$-algebra with action $\delta_{\mathcal{K}_A(E)}$ defined by $\delta_{\mathcal{K}_A(E)}(\theta_{\xi, \eta})=\delta_E(\xi)\delta_E(\eta)^*\in \mathcal{K}_A(E)\otimes C(\mathbb{G})$, for all $\xi, \eta\in E$ where $\theta_{\xi, \eta}$ denotes the corresponding rank one operator in $E$. By abuse of notation, we still denote by $\delta_{\mathcal{K}_A(E)}$ the extension of this homomorphism to $\mathcal{L}_A(E) = M(\mathcal{K}_A(E)) \rightarrow M(\mathcal{K}_{A}(E) \otimes C(\mathbb{G}))$. The latter is however not in general an action of $\mathbb{G}$ on $\mathcal{L}_A(E)$. Recall further that giving an action $\delta_E$ is equivalent to giving a unitary operator $V_E\in\mathcal{L}_{A\otimes C(\mathbb{G})}\big(E\underset{\delta}{\otimes}(A\otimes C(\mathbb{G})), E\otimes C(\mathbb{G})\big)$ such that $\delta_E(\xi)=V_E\circ T_\xi$ for all $\xi\in E$ where $T_\xi\in\mathcal{L}_{A\otimes C(\mathbb{G})}(A\otimes C(\mathbb{G}), E\underset{\delta}{\otimes}(A\otimes C(\mathbb{G})))$ is such that $T_\xi(x)=\xi\underset{\delta}{\otimes} x$, for all $x\in A\otimes C(\mathbb{G})$. One calls $V_E$ the \emph{admissible operator for $(E, \delta_E)$}. Moreover, we have $\delta_{\mathcal{K}_A(E)}=Ad_{V_E}$. We refer to \cite{BaajSkandalisQuantumKK} for more details. Note that $\mathcal{L}_{\mathbb{G}}(E)=\mathcal{L}_A(E)^{Ad_{V_E}}$. So, if $(E, \delta_E)$ is irreducible, then $\mathcal{L}_A(E)=\mathcal{K}_A(E)$ together with $Ad_{V_E}$ defines an ergodic action of $\mathbb{G}$.
	\end{rem}
	
	\begin{defi}
		Let $\mathbb{G}$ be a compact quantum group. Let $(A, \alpha)$ and $(B, \beta)$ be two $\mathbb{G}$-C$^*$-algebras. We say that $A$ and $B$ are $\mathbb{G}$-equivariantly Morita equivalent if there exists a $\mathbb{G}$-equivariant Hilbert $A$-module $(E, \delta_E)$ such that $B\cong \mathcal{K}_A(E)$ as $\mathbb{G}$-C$^*$-algebras. In this case we write $A\underset{\mathbb{G}}{\sim} B$.
	\end{defi}

	If $(A, \alpha)$ is a $\mathbb{G}$-C$^*$-algebra, we put $A^\alpha:=\{a\in A\ |\ \alpha(a)=a\otimes 1_{C(\mathbb{G})}\}$, which is a sub-C$^*$-algebra of $A$. If $A$ is unital, we say that $\alpha$ is ergodic if $A^\alpha=\mathbb{C}\cdot 1_A$. %We denote by $E_\alpha: A\longrightarrow A^\alpha$ the $\alpha$-invariant conditional expectation given by $E_{\alpha}(a)=(id_A\otimes h_{\mathbb{G}})\alpha(a)$, for all $a\in A$. Recall that we assume $\mathbb{G}$ to be a reduced compact quantum group, so $E_{\alpha}$ is automatically faithful. Given a $\mathbb{G}$-C$^*$-algebra $A$, we can equip $A$ with the pre-Hilbert $A^\alpha$-module structure given by $\langle a, b\rangle_{E_\alpha}:=E_\alpha(a^*b)$, for all $a,b\in A$. We denote by $L^2(A, E_\alpha)$ the completion of $A$ with respect to the inner product $\langle\cdot,\cdot\rangle_{E_\alpha}$. When $\alpha$ is ergodic, we have $E_\alpha(a)=\varphi_{\alpha}(a)1_A$ for $\varphi_{\alpha}$ a (unique) $\alpha$-invariant state on $A$. We then write $L^2(A)=L^2(A, \varphi)$ for the Hilbert space completion of $A$ with respect to the inner product $\langle a, b\rangle_{\varphi}:=\varphi(a^*b)$, for all $a,b\in A$.
	
	\begin{defi}
		Let $\mathbb{G}$ be a compact quantum group. A torsion action of $\mathbb{G}$ is a $\mathbb{G}$-C$^*$-algebra $(T, \tau)$ where $T$ is finite dimensional and $\tau$ is ergodic. We say that $\widehat{\mathbb{G}}$ is torsion-free if any torsion action of $\mathbb{G}$ is $\mathbb{G}$-equivariantly Morita equivalent to the trivial $\mathbb{G}$-C$^*$-algebra $\mathbb{C}$. The set of all equivalence classes under equivariant Morita equivalence of non-trivial torsion actions of $\mathbb{G}$ is denoted by $\text{Tor}(\widehat{\mathbb{G}})$.
	\end{defi}
	\begin{note}
		Notice that there is no risk of confusion between the notation $\text{Tor}(\widehat{\mathbb{G}})$ for torsion actions of $\mathbb{G}$ and the usual $\text{Tor}$-functor.
	\end{note}
	
	Finally, let us recall the \emph{two-sided crossed product} construction. It appears already in \cite[Section 2.6]{NikshychVainerman} in the context of quantum groupoids. It is used to formulate a quantum Baum-Connes assembly map in \cite{YukiBCTorsion} and \cite{KennyNestRubenBCProjective}, which we will use later. Let $\mathbb{G}$ be a compact quantum group. If $(B, \beta)$ is a right $\mathbb{G}$-C$^*$-algebra and $(A, \alpha)$ is a left $\mathbb{G}$-C$^*$-algebra, then the two-sided crossed product of $B$ and $A$ by $\mathbb{G}$, denoted by $B\underset{r, \beta}{\rtimes}\mathbb{G}\underset{r, \alpha}{\ltimes}A$, is the C$^*$-algebra defined by:
		$$B\underset{r, \beta}{\rtimes}\mathbb{G}\underset{r, \alpha}{\ltimes}A:=C^*\langle ((id\otimes \lambda)\beta(B)\otimes 1)(1\otimes \widehat{\lambda}(c_0(\widehat{\mathbb{G}}))\otimes 1)(1\otimes (\rho\otimes id)(\alpha(A)))\rangle\subset \mathcal{L}_{B\otimes A}(B\otimes L^2(\mathbb{G})\otimes A).$$
	
	To lighten the notations we will omit the representations $\lambda$, $\widehat{\lambda}$ and $\rho$ in the definition of $B\underset{r, \beta}{\rtimes}\mathbb{G}\underset{r, \alpha}{\ltimes}A$, and note that then $\rho(x) = U_{\mathbb{G}}xU_{\mathbb{G}}$ for $x\in C(\mathbb{G})$. We also write $\alpha_U(x) = (U_{\mathbb{G}}\otimes id)\alpha(x)(U_{\mathbb{G}}\otimes id)$ for $x\in A$. It is easy to show that $B\underset{r, \beta}{\rtimes}\mathbb{G}\underset{r, \alpha}{\ltimes}A=\overline{span}\{(\beta(B)\otimes 1)(1\otimes c_0(\widehat{\mathbb{G}})\otimes 1)(1\otimes \alpha_U(A))\}$. We use these two descriptions of $B\underset{r, \beta}{\rtimes}\mathbb{G}\underset{r, \alpha}{\ltimes}A$ interchangeably.
	
	\subsection{Quantum direct products}\label{sec.QDP}

	Let us recall briefly  the construction of quantum direct product in the sense of S. Wang \cite{WangSemidirect}. 
	
	Let $\mathbb{G}$ and $\mathbb{H}$ be two compact quantum groups. The quantum direct product of $\mathbb{G}$ and $\mathbb{H}$ is a compact quantum group denoted by $\mathbb{F}:=\mathbb{G}\times\mathbb{H}$ with $C(\mathbb{F})=C(\mathbb{G})\otimes C(\mathbb{H})$. The co-multiplication on $\mathbb{F}$ is given by $\Delta_{\mathbb{F}}=(id\otimes \Sigma\otimes id)(\Delta_{\mathbb{G}}\otimes \Delta_{\mathbb{H}})$. The representation theory of $\mathbb{F}$ can be described explicitly as follows. For every irreducible representation $z\in \text{Irr}(\mathbb{F})$, take a representative $u^z\in \mathcal{B}(H_z)\otimes C(\mathbb{F})$. Then there exist unique irreducible representations $x\in \text{Irr}(\mathbb{G})$ and $y\in \text{Irr}(\mathbb{H})$ such that if $u^x\in \mathcal{B}(H_x)\otimes C(\mathbb{G})$ and $u^y\in \mathcal{B}(H_y)\otimes C(\mathbb{H})$ are respective representatives of $x$ and $y$, then we have $u^z\cong u^x_{13}u^y_{24}\in\mathcal{B}(H_x\otimes H_y)\otimes C(\mathbb{F})\mbox{,}$ where $u^x_{13}$ and $u^y_{24}$ are the corresponding legs of $u^x$ and $u^y$, respectively inside $\mathcal{B}(H_x)\otimes\mathcal{B}(H_y)\otimes C(\mathbb{G})\otimes C(\mathbb{H})$. In this case we write $u^{(x,y)}:=u^{z}$.
	
	%\begin{remSec}\label{rem.TensorProdRepsF}
	%		Let $x,x'\in\text{Irr}(\mathbb{G})$ and $y,y'\in\text{Irr}(\mathbb{H})$. We consider the following (irreducible) representations of $\mathbb{F}$, $z:=(x,y)$ and $z':=(x', y')$. It is useful to compute their tensor product as representations of $\mathbb{F}$. A straightforward computation yields that $u^{z\otimes z'}:=u^{z}\otimes u^{z'}=(u^{x}\otimes u^{x'})_{13}(u^{y}\otimes u^{y'})_{24}$.
	%\end{remSec}
	
	This description of $\text{Irr}(\mathbb{F})$ yields that $c_0(\widehat{\mathbb{F}})\cong c_0(\widehat{\mathbb{G}})\otimes c_0(\widehat{\mathbb{H}})$.Therefore, given a $\widehat{\mathbb{G}}$-C$^*$-algebra $(A, \alpha)$ and a $\widehat{\mathbb{H}}$-C$^*$-algebra $(B, \beta)$, the tensor product $A\otimes B$ is a C$^*$-algebra equipped with the following action of $\widehat{\mathbb{F}}$:
	\begin{equation*}
			\begin{split}
				A\otimes B\overset{\alpha\otimes\beta}{\longrightarrow}&\widetilde{M}(c_0(\widehat{\mathbb{G}})\otimes A)\otimes \widetilde{M}(c_0(\widehat{\mathbb{H}})\otimes B)\subset \widetilde{M}(c_0(\widehat{\mathbb{G}})\otimes A\otimes c_0(\widehat{\mathbb{H}})\otimes B)\\
				&\overset{\Sigma_{23}}{\cong} \widetilde{M}(c_0(\widehat{\mathbb{G}})\otimes c_0(\widehat{\mathbb{H}})\otimes A\otimes B)\cong \widetilde{M}(c_0(\widehat{\mathbb{F}})\otimes A\otimes B).	
			\end{split}
		\end{equation*}
		
		By abuse of notation, we denote this composition simply by $\alpha\otimes\beta$. In a similar way, given a $\mathbb{G}$-C$^*$-algebra $(T, \tau)$ and a $\mathbb{H}$-C$^*$-algebra $(S, \sigma)$ we can still define the $\mathbb{F}$-C$^*$-algebra $(T\otimes S, \tau\otimes \sigma)$. In particular, it is clear that if $(T,\tau)$ is a torsion action of $\mathbb{G}$ and $(S, \sigma)$ is a torsion action of $\mathbb{H}$, then $(R, \rho)$ is a torsion action of $\mathbb{F}$.

	Finally, observe that the quantum direct product construction can be done more generally for locally compact quantum groups too. In this respect, the following is an easy observation (see \cite[Corollary 2.3.3]{RubenSemiDirect} for a proof in the case of discrete quantum groups).
	\begin{proSec}\label{pro.DirectProductTensorProduct}
		Let $\mathbb{G}$, $\mathbb{H}$ be two locally compact quantum groups and let $\mathbb{F}:=\mathbb{G}\times \mathbb{H}$ be the corresponding quantum direct product of $\mathbb{G}$ and $\mathbb{H}$. If $(A,\alpha)$ is a $\mathbb{G}$-C$^*$-algebra and $(B,\beta)$ is a $\mathbb{H}$-C$^*$-algebra, then there exists a canonical $*$-isomorphism $C \underset{\delta,r}{\rtimes}\mathbb{F} \cong A\underset{\alpha,r}{\rtimes}\mathbb{G} \otimes B\underset{\beta,r}{\rtimes} \mathbb{H}\mbox{,}$ where $C:=A\otimes B$ is the $\mathbb{F}$-C$^*$-algebra with action $\delta:=\alpha\otimes \beta$.
		
		In particular, if $\mathbb{H}=\mathbb{E}$ is the trivial locally compact quantum group and $B$ is any C$^*$-algebra equipped with trivial action, then there exists a canonical $*$-isomorphism $(A\otimes B) \underset{\delta,r}{\rtimes}\mathbb{G} \cong A\underset{\alpha,r}{\rtimes}\mathbb{G} \otimes B$, where $A\otimes B$ is the $\mathbb{G}$-C$^*$-algebra with action $\delta:=\alpha\otimes id$.
	\end{proSec}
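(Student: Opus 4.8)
The plan is to realise both sides of the asserted isomorphism as concrete C$^*$-algebras of adjointable operators and to match their canonical generators after a single leg-reordering unitary. Recall that, in reduced form, the right-handed reduced crossed product $A\underset{\alpha,r}{\rtimes}\mathbb{G}$ is the C$^*$-subalgebra of $\mathcal{L}_A(A\otimes L^2(\mathbb{G}))$ generated by $(id\otimes\lambda_{\mathbb{G}})\alpha(A)$ and by $1\otimes\widehat{\lambda}_{\mathbb{G}}(c_0(\widehat{\mathbb{G}}))$ --- the one-sided analogue of the two-sided crossed product recalled before the statement --- and similarly $B\underset{\beta,r}{\rtimes}\mathbb{H}$ is the corresponding algebra on $B\otimes L^2(\mathbb{H})$. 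Their minimal tensor product is then spatially represented on $A\otimes L^2(\mathbb{G})\otimes B\otimes L^2(\mathbb{H})$ as the C$^*$-algebra generated by the two families coming from the $\mathbb{G}$- and the $\mathbb{H}$-factor (they commute because they act on disjoint legs), this being the standard spatial description of a minimal tensor product.

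Next I would assemble the left-hand side from the structural facts about $\mathbb{F}$ recorded above. Since the Haar state of $\mathbb{F}$ is the product $h_{\mathbb{G}}\otimes h_{\mathbb{H}}$, we have $L^2(\mathbb{F})\cong L^2(\mathbb{G})\otimes L^2(\mathbb{H})$ with $\lambda_{\mathbb{F}}=\lambda_{\mathbb{G}}\otimes\lambda_{\mathbb{H}}$, while $c_0(\widehat{\mathbb{F}})\cong c_0(\widehat{\mathbb{G}})\otimes c_0(\widehat{\mathbb{H}})$ (as recorded above) with $\widehat{\lambda}_{\mathbb{F}}=\widehat{\lambda}_{\mathbb{G}}\otimes\widehat{\lambda}_{\mathbb{H}}$. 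Thus $C\underset{\delta,r}{\rtimes}\mathbb{F}$ is the C$^*$-subalgebra of $\mathcal{L}_{C}(C\otimes L^2(\mathbb{F}))=\mathcal{L}_{A\otimes B}(A\otimes B\otimes L^2(\mathbb{G})\otimes L^2(\mathbb{H}))$ generated by $(id\otimes\lambda_{\mathbb{F}})\delta(C)$ and $1\otimes\widehat{\lambda}_{\mathbb{F}}(c_0(\widehat{\mathbb{F}}))$. Writing out $\delta=\Sigma_{23}\circ(\alpha\otimes\beta)$ from its definition just before the statement, the leg-swap unitary $\Sigma_{23}$ (interchanging the $B$- and the $L^2(\mathbb{G})$-leg) conjugates the generator $(id\otimes\lambda_{\mathbb{F}})\delta(a\otimes b)$ to $\big[(id\otimes\lambda_{\mathbb{G}})\alpha(a)\big]\otimes\big[(id\otimes\lambda_{\mathbb{H}})\beta(b)\big]$, and $1\otimes\widehat{\lambda}_{\mathbb{F}}(c_0(\widehat{\mathbb{F}}))$ to $\big[1\otimes\widehat{\lambda}_{\mathbb{G}}(c_0(\widehat{\mathbb{G}}))\big]\otimes\big[1\otimes\widehat{\lambda}_{\mathbb{H}}(c_0(\widehat{\mathbb{H}}))\big]$.

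Consequently the $*$-isomorphism $x\mapsto\Sigma_{23}\,x\,\Sigma_{23}^{*}$ of $\mathcal{L}_{A\otimes B}(A\otimes B\otimes L^2(\mathbb{G})\otimes L^2(\mathbb{H}))$ onto $\mathcal{L}_{A\otimes B}(A\otimes L^2(\mathbb{G})\otimes B\otimes L^2(\mathbb{H}))$ restricts to a $*$-isomorphism $C\underset{\delta,r}{\rtimes}\mathbb{F}\cong A\underset{\alpha,r}{\rtimes}\mathbb{G}\otimes B\underset{\beta,r}{\rtimes}\mathbb{H}$, carrying the generators of the former bijectively onto those of the latter described in the first step; it is canonical because $\Sigma_{23}$ does not depend on the coefficient algebras. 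The particular case follows at once by taking $\mathbb{H}=\mathbb{E}$: then $L^2(\mathbb{E})=\mathbb{C}$, $c_0(\widehat{\mathbb{E}})=\mathbb{C}$ and $B\underset{\beta,r}{\rtimes}\mathbb{E}=B$ for the trivial action, whence $(A\otimes B)\underset{\delta,r}{\rtimes}\mathbb{G}\cong A\underset{\alpha,r}{\rtimes}\mathbb{G}\otimes B$ with $\delta=\alpha\otimes id$.

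The only point demanding genuine care is the bookkeeping in the second step: one must verify that the chain of identifications defining $\delta=\alpha\otimes\beta$ is compatible both with the factorisation $\lambda_{\mathbb{F}}=\lambda_{\mathbb{G}}\otimes\lambda_{\mathbb{H}}$ of the regular representation and with $c_0(\widehat{\mathbb{F}})\cong c_0(\widehat{\mathbb{G}})\otimes c_0(\widehat{\mathbb{H}})$, so that the single flip $\Sigma_{23}$ genuinely intertwines the two generating families rather than merely permuting their legs abstractly. In the stated locally compact generality one must moreover check that $L^2(\mathbb{F})$, its regular representation and $c_0(\widehat{\mathbb{F}})$ --- equivalently, the multiplicative unitary of $\mathbb{F}$ --- factorise as the corresponding tensor products; but this is the locally compact analogue of the facts already recorded in the compact/discrete setting and introduces no new phenomenon. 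For discrete quantum groups the result is precisely \cite[Corollary 2.3.3]{RubenSemiDirect}.
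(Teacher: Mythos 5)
Your argument is correct and is essentially the paper's own proof: the paper likewise obtains the isomorphism from the canonical (leg-reordering) map $\mathcal{L}_A(L^2(\mathbb{G})\otimes A)\otimes \mathcal{L}_B(L^2(\mathbb{H})\otimes B)\rightarrow \mathcal{L}_{A\otimes B}(L^2(\mathbb{F})\otimes C)$, which is exactly your $\Sigma_{23}$-conjugation; you have merely spelled out the matching of generators that the paper leaves implicit.
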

	\begin{proof}
		The isomorphism of the statement is simply induced by the canonical map $\mathcal{L}_A(L^2(\mathbb{G})\otimes A)\otimes \mathcal{L}_B(L^2(\mathbb{H})\otimes B)\rightarrow \mathcal{L}_{A\otimes B}\big(L^2(\mathbb{G})\otimes L^2(\mathbb{H})\otimes A\otimes B\big)=\mathcal{L}_{A\otimes B}(L^2(\mathbb{F})\otimes C)$.
	\end{proof}
	
	In relation with the two-sided crossed product construction recalled in Section \ref{sec.NotationsConventions}, the following result (which is a generalisation of Proposition \ref{pro.DirectProductTensorProduct}) is straightforward after a routine computation by applying the definitions and the fact that $c_0(\widehat{\mathbb{F}})\cong c_0(\widehat{\mathbb{G}})\otimes c_0(\widehat{\mathbb{H}})$.
	\begin{proSec}\label{pro.TwoSidedDirectProd}
	Let $\mathbb{G}$ and $\mathbb{H}$ be two compact quantum groups and let $\mathbb{F}:=\mathbb{G}\times\mathbb{H}$ be the corresponding quantum direct product of $\mathbb{G}$ and $\mathbb{H}$. Let $(A,\alpha)$ and $(B,\beta)$ be a right $\mathbb{G}$-C$^*$-algebra and a right $\mathbb{H}$-C$^*$-algebra, respectively. Let $(T, \tau)$ and $(S, \sigma)$ be a left $\mathbb{G}$-C$^*$-algebra and a left $\mathbb{H}$-C$^*$-algebra. Then $(A\otimes B)\underset{r, \alpha\otimes\beta}{\rtimes}\mathbb{F}\underset{r, \tau\otimes \sigma}{\ltimes}(T\otimes S)\cong (A\underset{r, \alpha}{\rtimes}\mathbb{G}\underset{r, \tau}{\ltimes}T)\otimes (B\underset{r, \beta}{\rtimes}\mathbb{H}\underset{r, \sigma}{\ltimes}S)$.
	
	In particular, if $\mathbb{H}=\mathbb{E}$ is the trivial compact quantum group and $B$ is any C$^*$-algebra equipped with trivial action, then $(A\otimes B)\underset{r, \alpha\otimes id}{\rtimes}\mathbb{G}\underset{r, \tau}{\ltimes}T\cong (A\underset{r, \alpha}{\rtimes}\mathbb{G}\underset{r, \tau}{\ltimes}T)\otimes B$.
	\end{proSec}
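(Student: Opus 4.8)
The plan is to unfold each two-sided crossed product as the closed linear span of products of its three families of generators, to realise the three algebras involved inside one ambient algebra of adjointable operators after a permutation of tensor legs, and to check that the generators of the $\mathbb{F}$-crossed product factorise exactly as tensor products of the generators of the $\mathbb{G}$- and $\mathbb{H}$-crossed products. The asserted $*$-isomorphism then follows by taking closed spans and passing to the generated C$^*$-algebras.

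First I would record the structural identities for $\mathbb{F}=\mathbb{G}\times\mathbb{H}$ on which the whole argument rests. From $C(\mathbb{F})=C(\mathbb{G})\otimes C(\mathbb{H})$ and the factorisation of the Haar state $h_{\mathbb{F}}=h_{\mathbb{G}}\otimes h_{\mathbb{H}}$ one obtains $L^2(\mathbb{F})=L^2(\mathbb{G})\otimes L^2(\mathbb{H})$ with $\xi_{\mathbb{F}}=\xi_{\mathbb{G}}\otimes\xi_{\mathbb{H}}$ and $\lambda_{\mathbb{F}}=\lambda_{\mathbb{G}}\otimes\lambda_{\mathbb{H}}$, together with the isomorphism $c_0(\widehat{\mathbb{F}})\cong c_0(\widehat{\mathbb{G}})\otimes c_0(\widehat{\mathbb{H}})$ recalled in Section \ref{sec.QDP}, compatible with $\widehat{\lambda}$. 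The key additional point is the factorisation of the Kac-system symmetry $U_{\mathbb{F}}=U_{\mathbb{G}}\otimes U_{\mathbb{H}}$: the modular conjugations of the tensor-product state split as $J_{\mathbb{F}}=J_{\mathbb{G}}\otimes J_{\mathbb{H}}$ and $\widehat{J}_{\mathbb{F}}=\widehat{J}_{\mathbb{G}}\otimes\widehat{J}_{\mathbb{H}}$ (the latter because the antipode and GNS map of $\mathbb{F}$ factorise on $\text{Pol}(\mathbb{F})=\text{Pol}(\mathbb{G})\otimes\text{Pol}(\mathbb{H})$), whence $U_{\mathbb{F}}=J_{\mathbb{F}}\widehat{J}_{\mathbb{F}}=U_{\mathbb{G}}\otimes U_{\mathbb{H}}$ and therefore $\rho_{\mathbb{F}}=\rho_{\mathbb{G}}\otimes\rho_{\mathbb{H}}$. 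This last identity is precisely what forces $(\tau\otimes\sigma)_U$ to decompose, on the appropriate legs, into $\tau_U$ on the $\mathbb{G}$-side and $\sigma_U$ on the $\mathbb{H}$-side.

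Next I would introduce the leg-permutation unitary
$$W:(A\otimes B)\otimes L^2(\mathbb{F})\otimes(T\otimes S)\longrightarrow \big(A\otimes L^2(\mathbb{G})\otimes T\big)\otimes\big(B\otimes L^2(\mathbb{H})\otimes S\big),$$
which reshuffles the six legs $A,B,L^2(\mathbb{G}),L^2(\mathbb{H}),T,S$ and implements, on coefficient algebras, the flip $A\otimes B\otimes T\otimes S\cong A\otimes T\otimes B\otimes S$. Conjugation by $W$ yields an isomorphism of the corresponding algebras of adjointable operators, under which I would compare generators one family at a time. Using $(id\otimes\lambda_{\mathbb{F}})(\alpha\otimes\beta)=\big((id\otimes\lambda_{\mathbb{G}})\alpha\big)_{13}\,\big((id\otimes\lambda_{\mathbb{H}})\beta\big)_{24}$, the first generator $(\alpha\otimes\beta)(A\otimes B)\otimes 1$ is carried to $(\alpha(A)\otimes 1)\otimes(\beta(B)\otimes 1)$; the middle generator $1\otimes c_0(\widehat{\mathbb{F}})\otimes 1$ is carried to $(1\otimes c_0(\widehat{\mathbb{G}})\otimes 1)\otimes(1\otimes c_0(\widehat{\mathbb{H}})\otimes 1)$ via $c_0(\widehat{\mathbb{F}})\cong c_0(\widehat{\mathbb{G}})\otimes c_0(\widehat{\mathbb{H}})$; and, by $U_{\mathbb{F}}=U_{\mathbb{G}}\otimes U_{\mathbb{H}}$, the last generator $1\otimes(\tau\otimes\sigma)_U(T\otimes S)$ is carried to $(1\otimes\tau_U(T))\otimes(1\otimes\sigma_U(S))$. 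Since the $\mathbb{G}$-side and $\mathbb{H}$-side operators act on disjoint legs and hence commute, the product of the three families factorises as an element of the $\mathbb{G}$-crossed product tensored with an element of the $\mathbb{H}$-crossed product. Taking closed spans, $\mathrm{Ad}_W$ identifies $(A\otimes B)\underset{r,\alpha\otimes\beta}{\rtimes}\mathbb{F}\underset{r,\tau\otimes\sigma}{\ltimes}(T\otimes S)$ with the image of $(A\underset{r,\alpha}{\rtimes}\mathbb{G}\underset{r,\tau}{\ltimes}T)\otimes(B\underset{r,\beta}{\rtimes}\mathbb{H}\underset{r,\sigma}{\ltimes}S)$ under the canonical map as in Proposition \ref{pro.DirectProductTensorProduct}.

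The routine but genuinely necessary work is the leg bookkeeping: one must verify that each of the three generating families transforms under $W$ by the stated reindexing and that the closed span of the products of generators is exactly the minimal tensor product, neither larger nor smaller. The only conceptual input beyond this bookkeeping is the factorisation $U_{\mathbb{F}}=U_{\mathbb{G}}\otimes U_{\mathbb{H}}$, which is what makes $(\tau\otimes\sigma)_U$ split, and this is the step I would single out as the heart of the computation. Finally, the \emph{in particular} statement is the special case $\mathbb{H}=\mathbb{E}$ and $S=\mathbb{C}$ with trivial actions, for which $C(\mathbb{E})=L^2(\mathbb{E})=c_0(\widehat{\mathbb{E}})=\mathbb{C}$ and $B\underset{r,id}{\rtimes}\mathbb{E}\underset{r,id}{\ltimes}\mathbb{C}\cong B$, so that the general isomorphism collapses to $(A\otimes B)\underset{r,\alpha\otimes id}{\rtimes}\mathbb{G}\underset{r,\tau}{\ltimes}T\cong (A\underset{r,\alpha}{\rtimes}\mathbb{G}\underset{r,\tau}{\ltimes}T)\otimes B$.
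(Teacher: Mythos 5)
Your proposal is correct and follows exactly the route the paper intends: the paper gives no written proof, stating only that the result is "straightforward after a routine computation by applying the definitions and the fact that $c_0(\widehat{\mathbb{F}})\cong c_0(\widehat{\mathbb{G}})\otimes c_0(\widehat{\mathbb{H}})$", and your argument carries out precisely that computation (leg permutation on $L^2(\mathbb{F})=L^2(\mathbb{G})\otimes L^2(\mathbb{H})$, generator-by-generator matching, with the factorisation $U_{\mathbb{F}}=U_{\mathbb{G}}\otimes U_{\mathbb{H}}$ correctly identified as the point needed to split $(\tau\otimes\sigma)_U$), in the same spirit as the paper's one-line proof of Proposition \ref{pro.DirectProductTensorProduct}.
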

	
	\begin{note}
		In the original paper \cite{WangSemidirect} by S. Wang the construction $\mathbb{G}\times\mathbb{H}$ is called \emph{Cartesian product of $\mathbb{G}$ by $\mathbb{H}$}. Moreover, this construction is consistent with the construction of direct product of classical discrete groups, for instance. Hence, the terminology \emph{direct product of $\mathbb{G}$ by $\mathbb{H}$} should not cause any confusion. However, we use the terminology \emph{quantum direct product} to emphasise the use of arbitrary compact quantum groups in this construction.
	\end{note}

	\subsection{The equivariant Kasparov category and the quantum Baum-Connes assembly map}\label{sec.QuantumBC}
	We refer to \cite{Blackadar}, \cite{BaajSkandalisQuantumKK} for more details about (equivariant) $KK$-theory and to  \cite{Jorgensen}, \cite{MeyerNest} for more details about triangulated categories and the Meyer-Nest approach to the BC property, which is fundamental for a quantum counterpart of the conjecture. 
	
	Let $\widehat{\mathbb{G}}$ be a discrete quantum group and consider the corresponding equivariant Kasparov category, $\mathscr{KK}^{\widehat{\mathbb{G}}}$, with canonical suspension functor denoted by $\Sigma$. $\mathscr{KK}^{\widehat{\mathbb{G}}}$ is a triangulated category whose distinguished triangles are given by mapping cone triangles (see \cite{MeyerNest} for more details). The word \emph{homomorphism (resp.\ isomorphism)} will mean \emph{homomorphism (resp.\ isomorphism) in the corresponding Kasparov category}; it will be a true homomorphism (resp.\ isomorphism) between $\widehat{\mathbb{G}}$-C$^*$-algebras or any Kasparov triple between $\widehat{\mathbb{G}}$-C$^*$-algebras (resp.\ any equivariant $KK$-equivalence between $\widehat{\mathbb{G}}$-C$^*$-algebras). Analogously, we can consider the equivariant Kasparov category $\mathscr{KK}^{\mathbb{G}}$.
	
	Exterior tensor product of Kasparov triples is important for the purpose of the present paper. Namely, given C$^*$-algebras $A$, $A'$, $B$, $B'$ and $D$, the map:
						$$
						\begin{array}{rccl}
							\widetilde{\tau}_D:&KK(A,A'\otimes D)\times KK(D\otimes B, B') & \longrightarrow &KK(A\otimes B, A'\otimes B')\\
							&(\mathcal{X}, \mathcal{Y}) & \longmapsto &\widetilde{\tau}_D(\mathcal{X}, \mathcal{Y}):=\tau_{B}(\mathcal{X})\underset{A'\otimes D\otimes B}{\otimes}{_{A'}}\tau(\mathcal{Y})=:\mathcal{X}\underset{D}{\otimes^{\text{ext}}}\mathcal{Y}
						\end{array}
						$$
						is bilinear, contravariantly functorial in $A$ and $B$ and covariantly functorial in $A'$ and $B'$. Here $\underset{A'\otimes D\otimes B}{\otimes}$ denotes the usual Kasparov product over $A'\otimes D\otimes B$; and $\tau_B$ and ${_{A'}}\tau$ denote the right and left exterior tensor products by $B$ and $A'$, respectively (cf. \cite[Chapter VIII, Section 18]{Blackadar} for details). Notice however that $\tau_A\cong{_{A}}\tau$, for all C$^*$-algebra $A$ because the tensor product of C$^*$-algebras is a commutative operation. In particular, taking $D:=\mathbb{C}$, $\widetilde{\tau}_{\mathbb{C}}$ defines a \emph{tensor product of Kasparov triples}. Namely, by virtue of \cite[Proposition 18.9.1]{Blackadar}, the operation $\widetilde{\tau}_{\mathbb{C}}(\ \cdot\ )= (\ \cdot\ ) \underset{\mathbb{C}}{\otimes^{\text{ext}}} (\ \cdot\ )$ is associative. For the sake of completeness, let us check the latter claim by translating the formulas from \cite[Proposition 18.9.1]{Blackadar} to the case $\widetilde{\tau}_{\mathbb{C}}$. Let $A$, $A'$, $B$, $B'$, $C$ and $C'$ be C$^*$-algebras and $\mathcal{X}\in KK(A, A') $, $\mathcal{Y}\in KK(B, B')$, $\mathcal{Z}\in KK(C, C')$. Then:
	\begin{equation*}
	\begin{split}
		(\mathcal{X}\underset{\mathbb{C}}{\otimes^{\text{ext}}}\mathcal{Y})\underset{\mathbb{C}}{\otimes^{\text{ext}}}\mathcal{Z}&=\tau_{C}\Big(\mathcal{X}\underset{\mathbb{C}}{\otimes^{\text{ext}}}\mathcal{Y}\Big)\underset{A'\otimes B'\otimes C}{\otimes} {_{A'\otimes B'}}\tau(\mathcal{Z})\\
		&=\Big(\tau_C(\mathcal{X})\underset{C}{\otimes^{\text{ext}}}{_{C}}\tau(\mathcal{Y})\Big)\underset{A'\otimes B'\otimes C}{\otimes} {_{A'\otimes B'}}\tau(\mathcal{Z})\\
		&=\Big(\tau_B\big(\tau_C(\mathcal{X})\big)\underset{A'\otimes B\otimes C}{\otimes} {_{A'}}\tau\big({_C}\tau(\mathcal{Y})\big)\Big)\underset{A'\otimes B'\otimes C}{\otimes} {_{A'\otimes B'}}\tau(\mathcal{Z})\\
		&=\Big(\tau_{B\otimes C}(\mathcal{X})\underset{A'\otimes B\otimes C}{\otimes} {_{C\otimes A'}}\tau(\mathcal{Y})\Big)\underset{A'\otimes B'\otimes C}{\otimes} {_{A'\otimes B'}}\tau(\mathcal{Z})\\
		&=\tau_{B\otimes C}(\mathcal{X})\underset{A'\otimes B\otimes C}{\otimes} \Big({_{C\otimes A'}}\tau(\mathcal{Y})\underset{A'\otimes B'\otimes C}{\otimes} {_{A'\otimes B'}}\tau(\mathcal{Z})\Big)\\
		&=\tau_{B\otimes C}(\mathcal{X})\underset{A'\otimes B\otimes C}{\otimes} \Big(\tau_C\big(\tau_{A'}(\mathcal{Y})\big)\underset{A'\otimes B'\otimes C}{\otimes} {_{B'}}\tau\big({_{A'}}\tau(\mathcal{Z})\big)\Big)\\
		&=\tau_{B\otimes C}(\mathcal{X})\underset{A'\otimes B\otimes C}{\otimes} \Big(\tau_{A'}(\mathcal{Y})\underset{A'}{\otimes^{\text{ext}}}{_{A'}}\tau(\mathcal{Z})\Big)\\
		&=\tau_{B\otimes C}(\mathcal{X})\underset{A'\otimes B\otimes C}{\otimes} {_{A'}}\tau\Big(\mathcal{Y}\underset{\mathbb{C}}{\otimes^{\text{ext}}}\mathcal{Z}\Big)=\mathcal{X}\underset{\mathbb{C}}{\otimes^{\text{ext}}}(\mathcal{Y}\underset{\mathbb{C}}{\otimes^{\text{ext}}}\mathcal{Z}).
	\end{split}
	\end{equation*}
	
	Moreover, given a C$^*$-algebra $A$, the functor $A\otimes (\ \cdot\ ):\text{C}^*\text{-Alg}\rightarrow \mathscr{KK}$, $B\mapsto A\otimes B$, is a split-exact, stable and homotopy invariant. By the universal property of the Kasparov category (cf. \cite{HigsonUnivProp}), it extends to a functor $A\otimes (\ \cdot\ ): \mathscr{KK}\rightarrow \mathscr{KK}$. Similarly, we have a functor $(\ \cdot\ )\otimes A: \mathscr{KK}\rightarrow \mathscr{KK}$. Since these extensions are natural, we obtain a bifunctor $\widetilde{\tau}_{\mathbb{C}}(\ \cdot\ )= (\ \cdot\ ) \underset{\mathbb{C}}{\otimes^{\text{ext}}} (\ \cdot\ ): \mathscr{KK}\times  \mathscr{KK}\rightarrow \mathscr{KK}$. In other words, the bifunctor $\widetilde{\tau}_{\mathbb{C}}$ confer a monoidal structure on $\mathscr{KK}$. Finally, since the Kasparov product plays the role of composition of morphisms within the category $\mathscr{KK}$, functoriality of $\widetilde{\tau}$ yields that:
	\begin{equation}\label{eq.TensorProdKasparovProd}
	\begin{split}
		(y \underset{\mathbb{C}}{\otimes^{\text{ext}}} x)\underset{A\otimes B}{\otimes} (z  \underset{\mathbb{C}}{\otimes^{\text{ext}}} w)=(y\underset{A}{\otimes} z)  \underset{\mathbb{C}}{\otimes^{\text{ext}}} (x\underset{B}{\otimes} w),
	\end{split}
	\end{equation}
	for all $y\in KK(D, A)$, $x\in KK(D', B)$, $z\in KK(A, A')$, $w\in KK(B, B')$ and all C$^*$-algebras $A$, $A'$, $B$, $B'$, $D$, $D'$.
	\begin{rem}
		If $G$ is a locally compact group, then the equivariant Kasparov category $\mathscr{KK}^{G}$ is also characterised by a universal property in terms of split-exactness, stability and homotopy invariance (cf. \cite{ThomsenUnivProp}). The exterior tensor product can be also defined in the equivariant setting, say $\widetilde{\tau}^G_{\mathbb{C}}$; the action of $G$ on a tensor product of C$^*$-algebras being the diagonal action. Again, $\mathscr{KK}^{G}$ is equipped in this way with a monoidal structure and the analogue to Equation (\ref{eq.TensorProdKasparovProd}) holds. Furthermore, if $\mathbb{G}$ is a regular locally compact quantum group, then $\mathscr{KK}^{\mathbb{G}}$ is also characterised by a universal property as in the classical setting (cf. \cite{VoigtPoincareDuality}). If $D(\mathbb{G})$ denotes the Drinfeld double of $\mathbb{G}$, then $\mathscr{KK}^{D(\mathbb{G})}$ is equipped with a monoidal structure by means of the \emph{braided tensor product} (cf. \cite{VoigtPoincareDuality}). Again, the analogue to Equation (\ref{eq.TensorProdKasparovProd}) holds.
	\end{rem}

	Assume for the moment that $\widehat{\mathbb{G}}$ is \emph{torsion-free}. In that case, consider the usual complementary pair of localizing subcategories in $\mathscr{KK}^{\widehat{\mathbb{G}}}$, $(\mathscr{L}_{\widehat{\mathbb{G}}}, \mathscr{N}_{\widehat{\mathbb{G}}})$. Denote by $(L,N)$ the canonical triangulated functors associated to this complementary pair. More precisely we have that $\mathscr{L}_{\widehat{\mathbb{G}}}$ is defined as the \emph{localizing subcategory of $\mathscr{KK}^{\widehat{\mathbb{G}}}$ generated by the objects of the form $\text{Ind}^{\widehat{\mathbb{G}}}_{\mathbb{E}}(C)=C\otimes c_0(\widehat{\mathbb{G}})$ with $C$ any C$^*$-algebra in the Kasparov category $\mathscr{KK}$} and $\mathscr{N}_{\widehat{\mathbb{G}}}$ is defined as the \emph{localizing subcategory of objects which are isomorphic to $0$ in $\mathscr{KK}$}: $\mathscr{L}_{\widehat{\mathbb{G}}}:=\langle\{\text{Ind}^{\widehat{\mathbb{G}}}_{\mathbb{E}}(C)=C\otimes c_0(\widehat{\mathbb{G}})\ |\ C\in \text{Obj}(\mathscr{KK})\}\rangle$ and $\mathscr{N}_{\widehat{\mathbb{G}}}=\{A\in \text{Obj}(\mathscr{KK}^{\widehat{\mathbb{G}}})\ |\ \text{Res}^{\widehat{\mathbb{G}}}_{\mathbb{E}}(A)\cong 0\}$.
	%$$\mathscr{L}_{\widehat{\mathbb{G}}}:=\langle\{\text{Ind}^{\widehat{\mathbb{G}}}_{\mathbb{E}}(C)=c_0(\widehat{\mathbb{G}})\otimes C\ |\ C\in \text{Obj}(\mathscr{KK})\}\rangle,$$
	%$$\mathscr{N}_{\widehat{\mathbb{G}}}=\{A\in \text{Obj}(\mathscr{KK}^{\widehat{\mathbb{G}}})\ |\ \text{Res}^{\widehat{\mathbb{G}}}_{\mathbb{E}}(A)=0\}.$$%=\{A\in \text{Obj}(\mathscr{KK}^{\widehat{\mathbb{G}}})\ |\ L(A)=0\}.$$
	
	If $\widehat{\mathbb{G}}$ is \emph{not} torsion-free, then a technical property lacked in the literature in order to define a suitable complementary pair. The natural candidate used in the related works (see for instance \cite{MeyerNestTorsion} and \cite{VoigtBaumConnesAutomorphisms}) is given by the following localizing subcategories of $\mathscr{KK}^{\widehat{\mathbb{G}}}$:
	$$\mathscr{L}_{\widehat{\mathbb{G}}}:=\langle\{C\otimes T\underset{r}{\rtimes}\mathbb{G}\ |\  C\in \text{Obj}(\mathscr{KK}),\ T\in\mathbb{T}\mbox{or}(\widehat{\mathbb{G}})\}\rangle,$$
	$$\mathscr{N}_{\widehat{\mathbb{G}}}:=\mathscr{L}^{\dashv}_{\widehat{\mathbb{G}}}=\{A\in \text{Obj}(\mathscr{KK}^{\widehat{\mathbb{G}}})\ |\ KK^{\widehat{\mathbb{G}}}(L, A)\cong 0\mbox{, $\forall$ $L\in \text{Obj}(\mathscr{L}_{\widehat{\mathbb{G}}})$}\}.$$
	
	\begin{rem}
			We put $\widehat{\mathscr{L}}_{\widehat{\mathbb{G}}}:=\langle\{C\otimes T\ |\  C\in \text{Obj}(\mathscr{KK}),\ T\in\mathbb{T}\mbox{or}(\widehat{\mathbb{G}})\}\rangle$, so that we have $\widehat{\mathscr{L}}_{\widehat{\mathbb{G}}} \rtimes \mathbb{G}=\mathscr{L}_{\widehat{\mathbb{G}}}$ by definition. Similarly we put $\widehat{\mathscr{N}}_{\widehat{\mathbb{G}}}:=\mathscr{N}_{\widehat{\mathbb{G}}}\rtimes \widehat{\mathbb{G}}$. The pair $(\widehat{\mathscr{L}}_{\widehat{\mathbb{G}}}, \widehat{\mathscr{N}}_{\widehat{\mathbb{G}}})$ is still complementary. We denote by $(\widehat{L}, \widehat{N})$ the corresponding triangulated functors defining the $(\widehat{\mathscr{L}}_{\widehat{\mathbb{G}}}, \widehat{\mathscr{N}}_{\widehat{\mathbb{G}}})$-triangles in $\mathscr{KK}^{\mathbb{G}}$.
	\end{rem}
	
	Recently, Y. Arano and A. Skalski \cite{YukiBCTorsion} have showed that these two subcategories form indeed a complementary pair of localizing subcategories in $\mathscr{KK}^{\widehat{\mathbb{G}}}$. Also, the author in collaboration with K. De Commer and R. Nest has obtained the same conclusion in \cite{KennyNestRubenBCProjective} for permutation torsion-free discrete quantum groups as an application of the study of the projective representation theory for compact quantum groups. In either case, the complementarity of the pair $(\mathscr{L}_{\widehat{\mathbb{G}}}, \mathscr{N}_{\widehat{\mathbb{G}}})$ is based in a generalisation of the \emph{Green-Julg isomorphism}. More precisely, given such a torsion action of $\mathbb{G}$, say $(T, \delta)$, we define the following triangulated functors:
	$$
		\begin{array}{rcclccl}
			j_{\mathbb{G}, T}:&\mathscr{KK}^\mathbb{G}& \longrightarrow & \mathscr{KK}, &(B, \beta)& \longmapsto & j_{\mathbb{G}, T}(B,\beta):= B\underset{r, \beta}{\rtimes}\mathbb{G}\underset{r, \overline{\delta}}{\ltimes}T^{op},
		\end{array}
	$$
	$$
		\begin{array}{rcclccl}
			\tau_{ T}:&\mathscr{KK}& \longrightarrow & \mathscr{KK}^\mathbb{G}, &C& \longmapsto & \tau_{T}(C):= (C\otimes T, id\otimes \delta).
		\end{array}
	$$
	
	The proof of the following result can be found in \cite{YukiBCTorsion} (see also \cite{KennyNestRubenBCProjective} for a different approach).
	\begin{theoSec}
		Let $\mathbb{G}$ be a compact quantum group. Let $(T, \delta)$ be a torsion action of $\mathbb{G}$. Then $\tau_T: \mathscr{KK}\longrightarrow \mathscr{KK}^{\mathbb{G}}$ is a left adjoint of $j_{\mathbb{G}, T}: \mathscr{KK}^{\mathbb{G}}\longrightarrow \mathscr{KK}$ as triangulated functors. More precisely,
		$$\Psi_T: KK^\mathbb{G}(C\otimes T, B)\overset{\sim}{\longrightarrow} KK\big(C, B\underset{r, \beta}{\rtimes}\mathbb{G}\underset{r, \overline{\delta}}{\ltimes}T^{op}\big),$$
		for all $C\in \text{Obj}(\mathscr{KK})$ and $(B, \beta)\in \text{Obj}(\mathscr{KK}^\mathbb{G})$.
	\end{theoSec}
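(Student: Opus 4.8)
The plan is to exhibit the natural isomorphism $\Psi_T$ explicitly, together with an inverse $\Phi_T$; once this is achieved, compatibility of $\tau_T$ and $j_{\mathbb{G}, T}$ with the triangulated structure (suspensions and mapping cone triangles) is formal, since both functors are assembled from stable, split-exact and homotopy-invariant constructions. It is instructive to keep in mind the case $T=\mathbb{C}$ with trivial action: there $\tau_{\mathbb{C}}(C)=C$ carries the trivial action, the left factor $\underset{r}{\ltimes}\mathbb{C}$ is trivial so that $j_{\mathbb{G}, \mathbb{C}}(B)=B\underset{r,\beta}{\rtimes}\mathbb{G}$, and the assertion reduces to the classical Green--Julg isomorphism $KK^{\mathbb{G}}(C, B)\cong KK(C, B\underset{r,\beta}{\rtimes}\mathbb{G})$ for compact quantum groups. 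The role of the two-sided crossed product by $T^{op}$ in the general case is precisely to absorb the non-trivial torsion carried by $\delta$, which cannot be untwisted by an equivariant Morita equivalence when $(T,\delta)$ is not trivial.

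First I would set up a two-sided Baaj--Skandalis descent. For right $\mathbb{G}$-C$^*$-algebras $D_1, D_2$ and the fixed left $\mathbb{G}$-C$^*$-algebra $T^{op}$, the assignment $D\mapsto D\underset{r}{\rtimes}\mathbb{G}\underset{r,\overline{\delta}}{\ltimes}T^{op}$ is functorial, and following \cite{BaajSkandalisQuantumKK} one descends a $\mathbb{G}$-equivariant Kasparov $(D_1, D_2)$-cycle $(\mathcal{E}, F)$ by forming the Hilbert module $\mathcal{E}\underset{r}{\rtimes}\mathbb{G}\underset{r}{\ltimes}T^{op}$ and descending $F$ via the regular representation together with the admissible unitary $V_{\mathcal{E}}$ of Remark \ref{rem.AdmissibleUnitary}. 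This yields a homomorphism $J^T\colon KK^{\mathbb{G}}(D_1, D_2)\to KK\big(D_1\underset{r}{\rtimes}\mathbb{G}\underset{r}{\ltimes}T^{op},\, D_2\underset{r}{\rtimes}\mathbb{G}\underset{r}{\ltimes}T^{op}\big)$. Writing $R:=T\underset{r,\delta}{\rtimes}\mathbb{G}\underset{r,\overline{\delta}}{\ltimes}T^{op}$, I would apply $J^T$ to a class $\mathcal{X}\in KK^{\mathbb{G}}(C\otimes T, B)$, use the canonical identification $(C\otimes T)\underset{r}{\rtimes}\mathbb{G}\underset{r}{\ltimes}T^{op}\cong C\otimes R$ from Proposition \ref{pro.TwoSidedDirectProd}, and compose with a canonical class $[\iota_T]\in KK(C, C\otimes R)$ induced by a map $\mathbb{C}\to R$, $1\mapsto p$, where $p$ is the distinguished projection attached to the trivial representation (for $T=\mathbb{C}$ this is the minimal projection $p_\epsilon$ in the $\epsilon$-block of $c_0(\widehat{\mathbb{G}})\cong\mathbb{C}\underset{r}{\rtimes}\mathbb{G}$). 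One then sets $\Psi_T(\mathcal{X}):=[\iota_T]\underset{C\otimes R}{\otimes} J^T(\mathcal{X})\in KK\big(C, B\underset{r,\beta}{\rtimes}\mathbb{G}\underset{r,\overline{\delta}}{\ltimes}T^{op}\big)$, and checks naturality in $C$ and $(B,\beta)$ from functoriality of descent.

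For the inverse I would exploit the dual structure on the crossed product. The algebra $B\underset{r,\beta}{\rtimes}\mathbb{G}\underset{r,\overline{\delta}}{\ltimes}T^{op}$ carries a canonical dual coaction coming from the symmetry $U_{\mathbb{G}}$ of the standard Kac system, and a second (dual) descent together with the Baaj--Skandalis biduality returns a class in $KK\big(C, B\underset{r,\beta}{\rtimes}\mathbb{G}\underset{r,\overline{\delta}}{\ltimes}T^{op}\big)$ to a $\mathbb{G}$-equivariant class on $(C\otimes T, B)$; this defines $\Phi_T$. The finite-dimensionality of $T$ guarantees that all modules involved stay countably generated, so that the construction remains within $\mathscr{KK}^{\mathbb{G}}$ and $\mathscr{KK}$.

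The main obstacle is verifying that $\Psi_T$ and $\Phi_T$ are mutually inverse. By naturality both composites reduce to a single biduality computation for the two-sided crossed product, amounting to an equivariant Morita equivalence that identifies the iterated crossed product with the original coefficient algebra up to the compacts of $L^2(\mathbb{G})\otimes T$. This is exactly where ergodicity of $\delta$ and the identification of $(T,\delta)$ with $\big(\mathcal{K}(H_x), \mathrm{Ad}_{V}\big)$ from Remark \ref{rem.AdmissibleUnitary} enter: ergodicity pins down the fixed-point algebra of the dual coaction and singles out the projection $p$ above, so that the relevant Kasparov products collapse to the identity. I expect propagating the $U_{\mathbb{G}}$-twist through the two-sided crossed product in this biduality step to be the essential difficulty, whereas the naturality and triangulated-compatibility checks, as well as the reduction to the case $T=\mathbb{C}$ via Proposition \ref{pro.DirectProductTensorProduct}, are routine.
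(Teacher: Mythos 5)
The paper does not actually prove this theorem: it states it and defers to \cite{YukiBCTorsion} (and to \cite{KennyNestRubenBCProjective} for an alternative approach), so there is no internal argument to measure your proposal against line by line. Your overall shape --- a Green--Julg-type adjunction obtained by descending equivariant cycles to two-sided crossed products and pairing with a canonical class in $K_0\big(T\underset{r,\delta}{\rtimes}\mathbb{G}\underset{r,\overline{\delta}}{\ltimes}T^{op}\big)$ --- is in the right spirit. But the proposal has genuine gaps at exactly the points where the cited proofs do real work. The most serious one is the inverse $\Phi_T$: the two-sided crossed product $B\underset{r,\beta}{\rtimes}\mathbb{G}\underset{r,\overline{\delta}}{\ltimes}T^{op}$ is not an iterated crossed product by a (quantum) group, since the left leg is a crossed product by the $\mathbb{G}$-C$^*$-algebra $T^{op}$, so Baaj--Skandalis/Takesaki--Takai biduality does not apply off the shelf and there is no canonical dual coaction whose second crossed product recovers $B$ up to compacts. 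Your sketch never explains the mechanism by which the tensor factor $T$ reappears on the domain side, and that mechanism is the content of the theorem. The standard remedy is not to build a two-sided inverse at all but to construct unit and counit natural transformations $\eta_C\colon C\to C\otimes R$ and $\varepsilon_B\colon \big(B\underset{r}{\rtimes}\mathbb{G}\underset{r}{\ltimes}T^{op}\big)\otimes T\to B$ in the appropriate Kasparov categories and verify the triangle identities; the counit, which your outline omits entirely, is the non-formal ingredient.

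Two further points would make the argument fail as written. First, you invoke ``the identification of $(T,\delta)$ with $(\mathcal{K}(H_x),\mathrm{Ad}_V)$ from Remark \ref{rem.AdmissibleUnitary}''; that remark produces ergodic actions on \emph{simple} finite-dimensional algebras from irreducible equivariant modules, but a general torsion action is an ergodic action on a finite-dimensional C$^*$-algebra that need not be simple (the structural action of $S^+_N$ on $\mathbb{C}^N$ is the standard counterexample, and the paper explicitly reserves the simple case for the name \emph{projective torsion}). Any step relying on $T$ being a single matrix block therefore does not cover the stated generality. Second, the existence of your ``distinguished projection $p$'' in $R=T\underset{r,\delta}{\rtimes}\mathbb{G}\underset{r,\overline{\delta}}{\ltimes}T^{op}$, the fact that it is a projection, and the fact that the induced class is the unit of an adjunction rather than merely some class, are asserted rather than proved; for $T\neq\mathbb{C}$ this requires an actual analysis of the block structure of $R$ and of how ergodicity of $\delta$ singles out the relevant corner. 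These verifications are precisely what you have labelled ``routine'', and they are not.
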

	
	Then the general Meyer-Nest's machinery yields in particular that $\widehat{\mathscr{N}}_{\widehat{\mathbb{G}}}=\ker_{\text{Obj}}\big((j_{\mathbb{G}, T})_{T\in \text{Tor}(\widehat{\mathbb{G}})}\big)$. This allows to define a quantum assembly map for arbitrary discrete quantum groups (not necessarily torsion-free) and thus a quantum version of the (strong) BC property. More precisely, consider the following homological functor:
	$$
		\begin{array}{rcclccl}
			F_*:&\mathscr{KK}^{\widehat{\mathbb{G}}}& \longrightarrow &\mathscr{A}b^{\mathbb{Z}/2}, &(A,\alpha) & \longmapsto &F_*(A):=K_{*}(A\underset{\alpha, r}{\rtimes} \widehat{\mathbb{G}}).
		\end{array}
	$$
	
	The quantum assembly map for $\widehat{\mathbb{G}}$ is given by the natural transformation $\eta^{\widehat{\mathbb{G}}}: \mathbb{L}F_*\longrightarrow F_*$, where $\mathbb{L}F_*$ denotes the localisation of $F_*$ with respect to the complementary pair $(\mathscr{L}_{\widehat{\mathbb{G}}}, \mathscr{N}_{\widehat{\mathbb{G}}})$. More precisely, given any $\widehat{\mathbb{G}}$-C$^*$-algebra $(A, \alpha)$, we consider a $(\mathscr{L}_{\widehat{\mathbb{G}}}, \mathscr{N}_{\widehat{\mathbb{G}}})$-triangle associated to $A$, say $\Sigma N(A)\longrightarrow L(A)\overset{u}{\longrightarrow} A\longrightarrow N(A)$. Then $\eta^{\widehat{\mathbb{G}}}_A=F_*(u)$. Let us point out a more precise picture for $\eta^{\widehat{\mathbb{G}}}_A$. On the one hand, given the $(\mathscr{L}_{\widehat{\mathbb{G}}}, \mathscr{N}_{\widehat{\mathbb{G}}})$-triangle $\Sigma N(A)\longrightarrow L(A)\overset{u}{\longrightarrow} A\longrightarrow N(A)$, the map $L(A)\underset{r}{\rtimes} \widehat{\mathbb{G}}\overset{u}{\rightarrow} A\underset{r}{\rtimes} \widehat{\mathbb{G}}$ must be viewed as a Kaparov triple $u\underset{r}{\rtimes} \widehat{\mathbb{G}}\in KK(L(A)\underset{r}{\rtimes} \widehat{\mathbb{G}}, A\underset{r}{\rtimes} \widehat{\mathbb{G}})$, i.e. a homomorphism in $\mathscr{KK}$. On the other hand, the functor $F_*(\ \cdot\ )=KK_*(\mathbb{C}, (\ \cdot\ )\underset{r}{\rtimes} \widehat{\mathbb{G}})$ can be viewed as the covariant homomorphism functor $KK(\mathbb{C}, \ \cdot\ )$ in $\mathscr{KK}$, which consists in composing, i.e. making a Kasparov product. In other words, when we apply this to crossed product C$^*$-algebras we have:
	\begin{equation}\label{eq.PictureQAssemblyMap}
	\begin{split}
	\eta^{\widehat{\mathbb{G}}}_A=F_*(u)=KK_*(\mathbb{C}, u\underset{r}{\rtimes} \widehat{\mathbb{G}})=(\ \cdot\ )\underset{L(A)\underset{r}{\rtimes} \widehat{\mathbb{G}}}{\otimes} u\underset{r}{\rtimes} \widehat{\mathbb{G}}: KK(\mathbb{C}, L(A)\underset{r}{\rtimes} \widehat{\mathbb{G}})\rightarrow KK(\mathbb{C}, A\underset{r}{\rtimes} \widehat{\mathbb{G}}).
	\end{split}
	\end{equation}
	\begin{defi}
		Let $\widehat{\mathbb{G}}$ be a discrete quantum group. We say that $\widehat{\mathbb{G}}$ satisfies the quantum Baum-Connes property (with coefficients) if the natural transformation $\eta^{\widehat{\mathbb{G}}}: \mathbb{L}F_*\longrightarrow F_*$ is a natural equivalence. We say that $\widehat{\mathbb{G}}$ satisfies the \emph{strong} quantum Baum-Connes property if $\mathscr{KK}^{\widehat{\mathbb{G}}}=\mathscr{L}_{\widehat{\mathbb{G}}}$. We abbreviate the predicate \emph{Baum-Connes property} by \emph{BC property}. 
	\end{defi}
	
	\begin{note}\label{note.DiracHom}
		The following nomenclature is useful. Given $A\in \text{Obj}(\mathscr{KK}^{\widehat{\mathbb{G}}})$ consider a $(\mathscr{L}_{\widehat{\mathbb{G}}}, \mathscr{N}_{\widehat{\mathbb{G}}})$-triangle associated to $A$, say $\Sigma N(A)\longrightarrow L(A)\overset{u}{\longrightarrow} A\longrightarrow N(A)$. We know that such triangles are distinguished and unique up to isomorphism (cf. \cite{MeyerNest} for a proof). The homomorphism $u:L(A)\longrightarrow A$ is called \emph{Dirac homomorphism for $A$}. Sometimes we write $D:=u$. In particular, we consider the Dirac homomorphism for $\mathbb{C}$ (as trivial $\widehat{\mathbb{G}}$-C$^*$-algebra), $D_{\mathbb{C}}:L(\mathbb{C})\longrightarrow \mathbb{C}$. We refer to $D_{\mathbb{C}}$ simply as \emph{Dirac homomorphism}.
	\end{note}

\section{\textsc{The Künneth classes}}\label{sec.KunnethFunctors}
	We start by recalling some terminology introduced in \cite{ChabertEchterhoffOyono} in the context of the (abstract) Künneth theorem.
	\begin{defiSec}\label{defi.KunnethCategory}
		A Künneth category is a subcategory $\mathcal{S}$ of $\text{C}^*\text{-Alg}$ such that:
		\begin{enumerate}[i)]
			\item $\mathcal{S}$ contains all separable commutative C$^*$-algebras.
			\item $\mathcal{S}$ is closed under stabilization, i.e. if $B\in \text{Obj}(\mathcal{S})$, then $\mathcal{K}\otimes B\in \text{Obj}(\mathcal{S})$.
			\item $\mathcal{S}$ is closed under suspension, i.e. if $B\in \text{Obj}(\mathcal{S})$, then $\Sigma(B)\in \text{Obj}(\mathcal{S})$ are objects in $\mathcal{S}$.
			\item If $0\rightarrow J\rightarrow B\rightarrow B/J\rightarrow 0$ is a semi-split short exact sequence in $\mathcal{S}$ such that two of the C$^*$-algebras are in $\mathcal{S}$, the so is the third.
		\end{enumerate}
	\end{defiSec}
	
	\begin{defiSec}\label{defi.KunnethFunctor}
		Let $\mathcal{S}$ be a Künneth category. A Künneth functor on $\mathcal{S}$ is an additive convariant functor $\kappa_*: \mathcal{S}\longrightarrow \mathscr{A}\text{b}^{\mathbb{Z}/2}$ such that:
		\begin{enumerate}[i)]
			\item $\kappa_*$ is invariant under stabilization and under homotopy.
			\item If $0\rightarrow J\rightarrow B\rightarrow B/J\rightarrow 0$ is a semi-split short exact sequence in $\mathcal{S}$, then the sequence $\kappa_*(J)\rightarrow \kappa_*(B)\rightarrow \kappa_*(B/J)$ is exact.
			\item $\kappa_*$ is stable, i.e. $\kappa_*(\Sigma(B))\cong \kappa_{*+1}(B)$ naturally, for all $B\in \text{Obj}(\mathcal{S})$.
			\item There exists a natural zero-graded homomorphism $\alpha: \kappa_*(\mathbb{C})\otimes K_*(B)\rightarrow \kappa_*(B)$, for all $B\in \text{Obj}(\mathcal{S})$ such that $\alpha$ is an isomorphism whenever $K_*(B)$ is free abelian.
		\end{enumerate}
	\end{defiSec}
	
	\begin{theoSec}\label{theo.AbstractKunnethTheorem}
		Let $\mathcal{S}$ be a Künneth category. If $\kappa_*$ is a Künneth functor on $\mathcal{S}$, then there exists a natural $1$-graded homomorphism $\beta: \kappa_*(B)\rightarrow \text{Tor}(\kappa_*(\mathbb{C}), K_*(B))$, for all $B\in \text{Obj}(\mathcal{S})$ such that the sequence:
		$$0\longrightarrow\kappa_*(\mathbb{C})\otimes K_*(B)\overset{\alpha}{\longrightarrow} \kappa_*(B)\overset{\beta}{\longrightarrow} \text{Tor}(\kappa_*(\mathbb{C}), K_*(B))\longrightarrow 0$$
		is exact. This sequence is called \emph{Künneth sequence for $\kappa_*$}.
	\end{theoSec}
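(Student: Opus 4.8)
The plan is to reduce the general statement to the case in which $K_*(B)$ is free abelian --- where axiom (iv) of Definition \ref{defi.KunnethFunctor} already supplies an isomorphism $\alpha$ and $\text{Tor}(\kappa_*(\mathbb{C}), K_*(B))$ vanishes --- by realising a free resolution of $K_*(B)$ \emph{geometrically}, i.e.\ by a semi-split extension of objects of $\mathcal{S}$. Concretely, I would first produce a geometric resolution of $B$: an object $F\in\text{Obj}(\mathcal{S})$ with $K_*(F)$ free abelian together with a $*$-homomorphism $\phi: F\to B$ (after replacing $B$ by $\mathcal{K}\otimes B$, which lies in $\mathcal{S}$ by axiom (ii) of Definition \ref{defi.KunnethCategory} and under which both $K_*$ and $\kappa_*$ are invariant by axiom (i) of Definition \ref{defi.KunnethFunctor}) such that $K_*(\phi)\colon K_*(F)\to K_*(B)$ is surjective. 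Such an $F$ is obtained by choosing generators of the countable groups $K_0(B)$ and $K_1(B)$ and realising each of them by a $*$-homomorphism into the stabilisation, taking $F$ to be a countable direct sum of objects of $\mathcal{S}$ with free rank-one K-theory (such as $\mathbb{C}$ or its suspension $\Sigma\mathbb{C}$).

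Next I would pass to the mapping cone $C_\phi$, which sits in a semi-split extension $0\to \Sigma B\to C_\phi\to F\to 0$ whose K-theoretic boundary map is $K_*(\phi)$. Since $\Sigma B\in\text{Obj}(\mathcal{S})$ (axiom (iii) of Definition \ref{defi.KunnethCategory}) and $F\in\text{Obj}(\mathcal{S})$, the two-out-of-three property (axiom (iv)) gives $C_\phi\in\text{Obj}(\mathcal{S})$. Surjectivity of $K_*(\phi)$ collapses the six-term sequence of the extension into a short exact sequence $0\to K_*(C_\phi)\overset{\iota}{\to} K_*(F)\overset{\pi}{\to} K_*(B)\to 0$, where $\iota=K_*(i)$ for the natural map $i\colon C_\phi\to F$ and $\pi=K_*(\phi)$; as a subgroup of the free group $K_*(F)$ the group $K_*(C_\phi)$ is itself free, so this is a free resolution of $K_*(B)$ of length one. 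In particular $\alpha$ is an isomorphism on both $F$ and $C_\phi$.

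Then I would apply $\kappa_*$. Being half-exact (axiom (ii)), homotopy invariant and stable (axioms (i) and (iii) of Definition \ref{defi.KunnethFunctor}), $\kappa_*$ is a homology theory and yields a long exact sequence for the semi-split extension above; using stability to identify $\kappa_*(\Sigma B)\cong\kappa_{*+1}(B)$, its connecting map is $\kappa_*(\phi)$, so one gets exactness of $\kappa_{*+1}(B)\overset{\delta}{\to}\kappa_*(C_\phi)\overset{i_*}{\to}\kappa_*(F)\overset{\phi_*}{\to}\kappa_*(B)\overset{\delta}{\to}\kappa_{*-1}(C_\phi)\overset{i_*}{\to}\kappa_{*-1}(F)$. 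Tensoring the free resolution with $\kappa_*(\mathbb{C})$ computes, by definition of the derived functors, $\text{Tor}(\kappa_*(\mathbb{C}),K_*(B))=\ker(1\otimes\iota)$ and $\kappa_*(\mathbb{C})\otimes K_*(B)=\text{coker}(1\otimes\iota)$. Naturality of $\alpha$ gives $i_*\circ\alpha_{C_\phi}=\alpha_F\circ(1\otimes\iota)$ and $\phi_*\circ\alpha_F=\alpha_B\circ(1\otimes\pi)$, with $\alpha_{C_\phi}$ and $\alpha_F$ invertible. A diagram chase then shows that $\alpha_B$ is injective (pull a kernel element back through the surjection $1\otimes\pi$, use exactness at $\kappa_*(F)$, invertibility of $\alpha_{C_\phi},\alpha_F$, and $\pi\iota=0$), that $\text{im}(\alpha_B)=\text{im}(\phi_*)=\ker\delta$, and that $\text{coker}(\alpha_B)\cong\text{im}(\delta)=\ker(i_*)=\alpha_{C_\phi}\big(\ker(1\otimes\iota)\big)\cong\text{Tor}(\kappa_*(\mathbb{C}),K_*(B))$. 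Setting $\beta:=\alpha_{C_\phi}^{-1}\circ\delta$, a degree $-1$ (hence $1$-graded) map into $\text{Tor}$, then produces the asserted exact sequence.

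The two main obstacles are the following. The geometric resolution is the real technical heart: one must realise a \emph{surjection} onto all of $K_*(B)$ by a genuine $*$-homomorphism out of an object of $\mathcal{S}$ with free K-theory, which forces one to represent arbitrary K-theory classes --- not merely Murray--von Neumann classes of projections --- while keeping the representing algebra inside $\mathcal{S}$; here one leans on the closure axioms of Definition \ref{defi.KunnethCategory} together with the standard Rosenberg--Schochet realisation. The second point is the \emph{naturality} of $\beta$ and its independence of the chosen resolution: this requires comparing two geometric resolutions of $B$, lifting the identity to a morphism of the associated mapping-cone extensions up to homotopy, and invoking uniqueness of free resolutions up to chain homotopy to conclude that the induced map on $\text{Tor}$ is canonical; naturality in $B$ then follows by applying the same comparison to a given morphism $B\to B'$.
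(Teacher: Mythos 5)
The paper states Theorem \ref{theo.AbstractKunnethTheorem} without proof, as a recollection of the abstract Künneth theorem from Chabert--Echterhoff--Oyono-Oyono, so there is no in-paper argument to compare against; your proposal correctly reproduces the standard proof from that source (geometric resolution with free $K$-theory, mapping-cone extension giving a length-one free resolution of $K_*(B)$, long exact sequence for the half-exact stable functor $\kappa_*$, and the diagram chase identifying $\ker$ and $\mathrm{coker}$ of $\alpha_B$ with $0$ and $\mathrm{Tor}$). The two points you flag as delicate --- realising arbitrary $K$-classes by genuine $*$-homomorphisms out of commutative algebras (e.g.\ from $C_0(\mathbb{R}^2)$ and $C_0(\mathbb{R})$ rather than $\mathbb{C}$ itself) and the naturality/independence of $\beta$ --- are indeed the standard technical points, and your treatment of them is the accepted one.
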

	
	\begin{exsSec}\label{exs.KunnethClasses}
		\begin{enumerate}
			\item Let $A$ and $B$ be two C$^*$-algebras. The external Kasparov product: $$KK(\mathbb{C}, A)\times KK(\mathbb{C}, B)\overset{\widetilde{\tau}_{\mathbb{C}}}{\longrightarrow} KK(\mathbb{C}, A\otimes B)$$
			yields a group homomorphism $\alpha: K_*(A)\otimes K_*(B)\rightarrow K_*(A\otimes B)$, which is natural on $A$ and $B$. We denote by $\mathcal{N}$ the class of all C$^*$-algebras $A$ in $\text{C}^*\text{-Alg}$ such that $\alpha$ is an isomorphism for all separable C$^*$-algebra $B$ with $K_*(B)$ free abelian. By abuse of notation, we still denote by $\mathcal{N}$ the full subcategory of $\text{C}^*\text{-Alg}$ generated by the class $\mathcal{N}$. It is well-known that $\mathcal{N}$ is a Künneth category in the sense of Definition \ref{defi.KunnethCategory} (cf. \cite{ChabertEchterhoffOyono}). Moreover, if $A\in\mathcal{N}$, it is clear that the map $B\mapsto K_*(A\otimes B)$ defines a functor on the whole category $\text{C}^*\text{-Alg}$ satisfying properties $(i)-(iv)$ from Definition \ref{defi.KunnethFunctor} (cf. \cite{ChabertEchterhoffOyono}). In other words, this map defines a Künneth functor. Consequently, Theorem \ref{theo.AbstractKunnethTheorem} yields that the Künneth sequence:
			$$0\longrightarrow K_*(A)\otimes K_*(B)\overset{\alpha}{\longrightarrow} K_*(A\otimes B)\overset{\beta}{\longrightarrow} \text{Tor}(K_*(A), K_*(B))\longrightarrow 0$$
			is exact for all $A\in\mathcal{N}$ and all $B\in \text{Obj}(\text{C}^*\text{-Alg})$. It is important to mention that the class $\mathcal{N}$ contains the bootstrap class and it satisfies the following remarkable stability properties (cf. \cite[Lemma 4.4]{ChabertEchterhoffOyono}):
			\begin{lemSec}\label{lem.StabilityClassN}
				$\mathcal{N}$ contains the bootstrap class and the following properties hold.
				\begin{enumerate}[i)]
					\item $\mathcal{N}$ is stable under $KK$-equivalence hence it is stable under Morita equivalence. 
					\item If $0\rightarrow I\rightarrow A\rightarrow A/I\rightarrow$ is a semi-split short exact sequence of C$^*$-algebras such that two of them are in $\mathcal{N}$, then so is the third.
					\item If $A, B\in\mathcal{N}$, then $A\otimes B\in\mathcal{N}$.
					\item If $A=\underset{\rightarrow}{\lim}\ A_i$ such that all structure maps are injective and $A_i\in\mathcal{N}$ for all $i$, then $A\in\mathcal{N}$.
				\end{enumerate}
			\end{lemSec}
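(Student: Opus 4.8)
The plan is to derive the bootstrap-class containment formally at the very end from the four stability properties, and to prove (i)--(iv) by comparing, through the natural external-product map $\alpha$, appropriate exact sequences in K-theory. The recurring engine is the naturality and associativity of the external Kasparov product $\widetilde{\tau}_{\mathbb{C}}$ recorded before Equation (\ref{eq.TensorProdKasparovProd}), together with the following upgrade of Example \ref{exs.KunnethClasses}(1): once $A\in\mathcal{N}$, the functor $X\mapsto K_*(A\otimes X)$ is a Künneth functor, so by Theorem \ref{theo.AbstractKunnethTheorem} the sequence
$$0\longrightarrow K_*(A)\otimes K_*(X)\overset{\alpha}{\longrightarrow} K_*(A\otimes X)\overset{\beta}{\longrightarrow} \mathrm{Tor}(K_*(A),K_*(X))\longrightarrow 0$$
is exact for \emph{every} separable $X$, not merely those with $K_*(X)$ free. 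For (i), given a $KK$-equivalence $x\in KK(A,A')$ I would form the square whose vertical arrows are the isomorphisms $K_*(A)\otimes K_*(B)\cong K_*(A')\otimes K_*(B)$ and $K_*(A\otimes B)\cong K_*(A'\otimes B)$ induced by Kasparov product with $x$ and with $x\otimes^{\mathrm{ext}}1_B$, and whose horizontal arrows are $\alpha_A,\alpha_{A'}$; functoriality of $\widetilde{\tau}_{\mathbb{C}}$ makes it commute, so $\alpha_A$ is iso iff $\alpha_{A'}$ is. Since Morita equivalence implies $KK$-equivalence, the Morita statement follows.

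For (ii), tensoring a semi-split extension $0\to I\to A\to A/I\to 0$ by a fixed $B$ with $K_*(B)$ free yields again a semi-split extension (the completely positive splitting tensors with $\mathrm{id}_B$), producing two cyclic six-term exact sequences: the K-theory sequence of $0\to I\otimes B\to A\otimes B\to (A/I)\otimes B\to 0$, and the six-term sequence of the original extension tensored over $\mathbb{Z}$ with the free (hence flat) group $K_*(B)$. The maps $\alpha$ assemble into a morphism between these two sequences; it commutes with the boundary maps because the index and exponential maps are implemented by Kasparov product with the fixed extension class and $\widetilde{\tau}_{\mathbb{C}}$ is natural. The five lemma applied around the cycle then yields the two-out-of-three conclusion.

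For (iii), assume $A,B\in\mathcal{N}$ and let $C$ be separable with $K_*(C)$ free. Applying the full Künneth sequence for $A$ to $X=B\otimes C$, and rewriting $K_*(B\otimes C)\cong K_*(B)\otimes K_*(C)$ (using $B\in\mathcal{N}$ and the vanishing of $\mathrm{Tor}(-,K_*(C))$ since $K_*(C)$ is free), I identify its outer terms with those of the sequence obtained by tensoring the Künneth sequence for $A$ applied to $X=B$ with the flat group $K_*(C)$. Associativity of $\widetilde{\tau}_{\mathbb{C}}$ (the computation carried out before Equation (\ref{eq.TensorProdKasparovProd})) shows that $\alpha_{A\otimes B,\,C}\colon K_*(A\otimes B)\otimes K_*(C)\to K_*(A\otimes B\otimes C)$ intertwines these two exact sequences as the identity on the outer terms, so the five lemma forces $\alpha_{A\otimes B,\,C}$ to be an isomorphism, i.e. $A\otimes B\in\mathcal{N}$.

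For (iv) I would invoke continuity of K-theory and of the minimal tensor product under injective inductive limits, giving $K_*(A\otimes B)\cong\varinjlim K_*(A_i\otimes B)$ and $K_*(A)\otimes K_*(B)\cong\varinjlim\bigl(K_*(A_i)\otimes K_*(B)\bigr)$, with $\alpha_A=\varinjlim\alpha_{A_i}$ by naturality; a direct limit of isomorphisms is an isomorphism. Finally, since $\mathbb{C}\in\mathcal{N}$ trivially and $\mathcal{N}$ is closed under $KK$-equivalence, semi-split extensions and countable injective inductive limits by (i), (ii) and (iv)---exactly the operations generating the bootstrap class from $\mathbb{C}$---the bootstrap class is contained in $\mathcal{N}$. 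I expect the genuine obstacle to be verifying the commutativity of the comparison ladders in (ii) and (iii): one must check that $\alpha$ is compatible with the boundary maps of the six-term sequence and with the associativity isomorphism of the triple tensor product. This is precisely where the bookkeeping of the external Kasparov product does the real work, the five-lemma steps themselves being then routine.
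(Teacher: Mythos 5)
The paper does not actually prove this lemma: it is imported verbatim from \cite[Lemma 4.4]{ChabertEchterhoffOyono}, so there is no internal proof to compare against. Your reconstruction follows the standard Rosenberg--Schochet/Chabert--Echterhoff--Oyono-Oyono argument and is essentially correct: (i) by naturality of $\widetilde{\tau}_{\mathbb{C}}$ and Equation (\ref{eq.TensorProdKasparovProd}); (ii) by the five lemma on the ladder of six-term sequences, using that the minimal tensor product by a fixed algebra preserves semi-split exactness and that the boundary maps are Kasparov products with the extension class, hence compatible with $\alpha$ by the same multiplicativity formula; (iv) by continuity of $K$-theory, of $\otimes_{\min}$ and of $\otimes_{\mathbb{Z}}$ under injective inductive limits; and the bootstrap containment then follows formally from $\mathbb{C}\in\mathcal{N}$ and (i), (ii), (iv).

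The one place where your write-up is slightly misdirected is the right-hand square of the ladder in (iii). Associativity of $\widetilde{\tau}_{\mathbb{C}}$ only gives commutativity of the \emph{left} square, i.e.\ $\alpha_{A\otimes B,\,C}\circ(\alpha_{A,B}\otimes\mathrm{id})=\alpha_{A,\,B\otimes C}\circ(\mathrm{id}\otimes\alpha_{B,C})$; as you note, this alone shows $\alpha_{A\otimes B,\,C}$ is injective on the image of $\alpha_{A,B}\otimes\mathrm{id}$ and surjective onto the image of $\alpha_{A,\,B\otimes C}$, but to run the short five lemma you also need the induced map on cokernels $\mathrm{Tor}(K_*(A),K_*(B))\otimes K_*(C)\to\mathrm{Tor}(K_*(A),K_*(B\otimes C))$ to be the canonical isomorphism. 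That is a statement about the compatibility of the Tor-component $\beta$ of the Künneth sequence with the external product by $C$, not about $\alpha$, and it is verified by going back to the construction of $\beta$ via a geometric resolution $0\to F_1\to F_2\to B\otimes\mathcal{K}\to 0$ with $F_i$ commutative of free $K$-theory: tensoring the resolution by $C$ gives a geometric resolution of $B\otimes C$ (one needs $K_*(F_i\otimes C)\cong K_*(F_i)\otimes K_*(C)$ free, which holds because the $F_i$ satisfy the classical Künneth formula), and the connecting maps defining $\beta$ are then matched exactly as in your step (ii). With that adjustment the argument is complete.
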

			\item Let $G$ be a locally compact group. Let $A$ be a $G$-C$^*$-algebra and $B$ a C$^*$-algebra. Observe that one has an obvious map $KK(\mathbb{C}, B)\rightarrow KK^G(\mathbb{C}, B)$, which consists in equipping with the trivial action of $G$. Then, for all $G$-compact subspace $X\subset \underline{E}G$, the external Kasparov product: 
			$$KK^G(C_0(X), A)\times KK(\mathbb{C}, B)\longrightarrow KK^G(C_0(X), A)\times KK^G(\mathbb{C}, B)\overset{\widetilde{\tau}_{\mathbb{C}}}{\longrightarrow} KK(C_0(X), A\otimes B)$$
			yields a group homomorphism $\alpha_X: KK^G(C_0(X), A)\otimes K_*(B)\rightarrow KK^G(C_0(X), A\otimes B)$, for all $G$-compact subspace $X\subset \underline{E}G$, which is natural on $A$ and $B$. One checks that the maps $\{\alpha_X\}_{\underset{\text{$G$-compact}}{X\subset \underline{E}G}}$ are compatible with the inclusion of $G$-compact spaces. Hence, one obtains a group homomorphism $\alpha_G: K^{\text{top}}_*(G; A)\otimes K_*(B)\rightarrow K^{\text{top}}_*(G; A\otimes B)$, which is natural on $A$ and $B$. We denote by $\mathcal{N}_G$ the class of all $G$-C$^*$-algebras $A$ in $G$-$\text{C}^*\text{-Alg}$ such that $\alpha_G$ is an isomorphism for all separable C$^*$-algebra $B$ with $K_*(B)$ free abelian. By abuse of notation, we still denote by $\mathcal{N}_G$ the full subcategory of $G$-$\text{C}^*\text{-Alg}$ generated by the class $\mathcal{N}_G$. It is well-known that $\mathcal{N}_G$ is a Künneth category in the sense of Definition \ref{defi.KunnethCategory} (cf. \cite{ChabertEchterhoffOyono}). Moreover, if $A\in\mathcal{N}_G$, it is clear that the map $B\mapsto K^{\text{top}}_*(G; A\otimes B)$ defines a functor on the whole category $\text{C}^*\text{-Alg}$ satisfying properties $(i)-(iv)$ from Definition \ref{defi.KunnethFunctor} (cf. \cite{ChabertEchterhoffOyono}). In other words, this map defines a Künneth functor. Consequently, Theorem \ref{theo.AbstractKunnethTheorem} yields that the Künneth sequence:
			$$0\longrightarrow K^{\text{top}}_*(G; A)\otimes K_*(B)\overset{\alpha_G}{\longrightarrow} K^{\text{top}}_*(G; A\otimes B)\overset{\beta_G}{\longrightarrow} \text{Tor}(K^{\text{top}}_*(G; A), K_*(B))\longrightarrow 0$$
			is exact for all $A\in\mathcal{N}_G$ and all $B\in \text{Obj}(\text{C}^*\text{-Alg})$.
		\end{enumerate}
	\end{exsSec}

	\subsection{Preparatory observations}\label{sec.PrepKunneth}
	In order to provide examples of Künneth functors in the framework of quantum groups, let us recall some constructions appearing in \cite{RubenSemiDirect}. Let $\mathbb{G}$ and $\mathbb{H}$ be two compact quantum groups. We put $\mathbb{F}:=\mathbb{G}\times\mathbb{H}$ for the corresponding quantum direct product. Consider the complementary pair of localizing subcategories in $\mathscr{KK}^{\widehat{\mathbb{F}}}$, $\mathscr{KK}^{\widehat{\mathbb{G}}}$ and $\mathscr{KK}^{\widehat{\mathbb{H}}}$: $(\mathscr{L}_{\widehat{\mathbb{F}}}, \mathscr{N}_{\widehat{\mathbb{F}}})$, $(\mathscr{L}_{\widehat{\mathbb{G}}}, \mathscr{N}_{\widehat{\mathbb{G}}})$ and $(\mathscr{L}_{\widehat{\mathbb{H}}}, \mathscr{N}_{\widehat{\mathbb{H}}})$, respectively; as explained in Section \ref{sec.QuantumBC}. Accordingly, the canonical triangulated functors associated to these complementary pairs are denoted by $(L,N)$, $(L', N')$ and $(L'', N'')$, respectively. Consider the homological functors defining the \emph{quantum} Baum-Connes assembly maps for $\widehat{\mathbb{F}}$, $\widehat{\mathbb{G}}$ and $\widehat{\mathbb{H}}$:
	$$F:\mathscr{KK}^{\widehat{\mathbb{F}}} \rightarrow \mathscr{A}b^{\mathbb{Z}/2}\mbox{, } (C,\delta) \mapsto F(C):=K_{*}(C\underset{\delta, r}{\rtimes} \widehat{\mathbb{F}}),$$
	$$F':\mathscr{KK}^{\widehat{\mathbb{G}}} \rightarrow \mathscr{A}b^{\mathbb{Z}/2}\mbox{, } (A,\alpha) \mapsto F'(A):=K_{*}(A\underset{\alpha,r}{\rtimes} \widehat{\mathbb{G}}),$$
	$$F'':\mathscr{KK}^{\widehat{\mathbb{H}}} \rightarrow \mathscr{A}b^{\mathbb{Z}/2}\mbox{, } (B,\beta) \mapsto F''(B):= K_{*}(B \underset{\beta,r}{\rtimes} \widehat{\mathbb{H}}).$$
	
	Therefore, the quantum assembly maps for $\widehat{\mathbb{F}}$, $\widehat{\mathbb{G}}$ and $\widehat{\mathbb{H}}$ are given by the natural transformations $\mathbb{L}F\overset{\eta^{\widehat{\mathbb{F}}}}{\longrightarrow} F$, $\mathbb{L}F'\overset{\eta^{\widehat{\mathbb{G}}}}{\longrightarrow} F'$ and $\mathbb{L}F''\overset{\eta^{\widehat{\mathbb{H}}}}{\longrightarrow} F''$, respectively. Next, consider the functor: 
	$$\mathcal{Z}:\mathscr{KK}^{\widehat{\mathbb{G}}}\times \mathscr{KK}^{\widehat{\mathbb{H}}} \rightarrow \mathscr{KK}^{\widehat{\mathbb{F}}}$$ 
	defined on objects by $(A,\alpha)\times (B,\beta) \mapsto \mathcal{Z}(A, B):= (C:=A\otimes B, \delta:=\alpha\otimes \beta)$ and on homomorphisms by $\mathcal{Z}(\mathcal{X},\mathcal{Y}):=\mathcal{X}\otimes \mathcal{Y}:=\tau_{B}(\mathcal{X})\underset{A'\otimes B}{\otimes}{_{A'}}\tau(\mathcal{Y})$, for all $\mathcal{X}\in KK^{\widehat{\mathbb{G}}}(A,A')$, $\mathcal{Y}\in KK^{\widehat{\mathbb{H}}}(B,B')$ with $(A, \alpha)\times (B, \beta), (A',\alpha')\times (B',\beta')\in \text{Obj}(\mathscr{KK}^{\widehat{\mathbb{G}}})\times \text{Obj}(\mathscr{KK}^{\widehat{\mathbb{H}}})$. %As observed in Section \ref{sec.TorsionQuantumDirProd}, given an object $(A,\alpha)\times (B,\beta)\in \text{Obj}(\mathscr{KK}^{\widehat{\mathbb{G}}})\times \text{Obj}(\mathscr{KK}^{\widehat{\mathbb{H}}})$, the tensor product $A\otimes B$ is a C$^*$-algebra equipped with the action $\alpha\otimes\beta$.

	\begin{lem}\label{lem.FunctorPreservingLDirectProduct}
		Let $\mathbb{G}$, $\mathbb{H}$ be two compact quantum groups and let $\mathbb{F}:=\mathbb{G}\times \mathbb{H}$ be the corresponding quantum direct product of $\mathbb{G}$ and $\mathbb{H}$. The functor $\mathcal{Z}:\mathscr{KK}^{\widehat{\mathbb{G}}}\times \mathscr{KK}^{\widehat{\mathbb{H}}} \rightarrow \mathscr{KK}^{\widehat{\mathbb{F}}}$ is such that $\mathcal{Z}(\mathscr{L}_{\widehat{\mathbb{G}}}\times \mathscr{L}_{\widehat{\mathbb{H}}})\subset \mathscr{L}_{\widehat{\mathbb{F}}}$. If, moreover, either $\widehat{\mathbb{G}}$ or $\widehat{\mathbb{H}}$ satisfies the strong quantum BC property, then $\mathcal{Z}(\mathscr{N}_{\widehat{\mathbb{G}}}\times \mathscr{N}_{\widehat{\mathbb{H}}})\subset \mathscr{N}_{\widehat{\mathbb{F}}}$. Besides, we have the following.
		\begin{enumerate}[i)]
		
		\item If $(A_0, \alpha_0)\in \text{Obj}(\mathscr{KK}^{\widehat{\mathbb{G}}})$ is a given $\widehat{\mathbb{G}}$-C$^*$-algebra, the functor ${_{A_0}}\mathcal{Z}:\mathscr{KK}^{\widehat{\mathbb{H}}} \rightarrow \mathscr{KK}^{\widehat{\mathbb{F}}}\mbox{, }(B,\beta)  \mapsto {_{A_0}}\mathcal{Z}(B):= \mathcal{Z}(A_0, B)$ is triangulated. If, moreover, we assume that $\widehat{\mathbb{H}}$ satisfies the strong quantum BC property, then ${_{A_0}}\mathcal{Z}(\mathscr{N}_{\widehat{\mathbb{H}}})\subset \mathscr{N}_{\widehat{\mathbb{F}}}$.
		
		\item If $(B_0, \beta_0)\in \text{Obj}(\mathscr{KK}^{\widehat{\mathbb{H}}})$ is a given $\widehat{\mathbb{H}}$-C$^*$-algebra, the functor $\mathcal{Z}_{B_0}:\mathscr{KK}^{\widehat{\mathbb{G}}} \rightarrow\mathscr{KK}^{\widehat{\mathbb{F}}}\mbox{, }(A,\alpha) \mapsto \mathcal{Z}_{B_0}(A):= \mathcal{Z}(A, B_0)$ is triangulated. If, moreover, we assume that $\widehat{\mathbb{G}}$ satisfies the strong quantum BC property, then $\mathcal{Z}_{B_0}(\mathscr{N}_{\widehat{\mathbb{G}}})\subset \mathscr{N}_{\widehat{\mathbb{F}}}$.
		\end{enumerate}
	\end{lem}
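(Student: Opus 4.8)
The plan is to establish the two functoriality assertions in (i) and (ii) first, since the triangulatedness of the partial functors is exactly the tool needed for the inclusion $\mathcal{Z}(\mathscr{L}_{\widehat{\mathbb{G}}}\times \mathscr{L}_{\widehat{\mathbb{H}}})\subset \mathscr{L}_{\widehat{\mathbb{F}}}$. For a fixed $A_0$ one checks that ${_{A_0}}\mathcal{Z}$ is nothing but the left exterior Kasparov tensor product ${_{A_0}}\tau$ by $A_0$ (composed with the identification $\widehat{\mathbb{G}}\times\widehat{\mathbb{H}}=\widehat{\mathbb{F}}$ acting on the tensor factor), and symmetrically $\mathcal{Z}_{B_0}=\tau_{B_0}$. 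Since the suspension is $\Sigma = C_0(\mathbb{R})\otimes(\ \cdot\ )$, tensoring by the fixed object $A_0$ commutes with $\Sigma$; and tensoring a semi-split extension by $A_0$ remains semi-split with mapping cone the $A_0$-tensor of the original cone, so these functors send mapping-cone triangles to mapping-cone triangles and are therefore triangulated. They moreover preserve countable direct sums, because the minimal tensor product by a fixed C$^*$-algebra does.

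With these tools, I would prove the $\mathscr{L}$-inclusion by reduction to generators. Fix a generator $B_0=(C'\otimes S)\underset{r}{\rtimes}\mathbb{H}$ of $\mathscr{L}_{\widehat{\mathbb{H}}}$, with $C'\in \text{Obj}(\mathscr{KK})$ and $S\in \text{Tor}(\widehat{\mathbb{H}})$. The preimage $\mathcal{Z}_{B_0}^{-1}(\mathscr{L}_{\widehat{\mathbb{F}}})$ is a localizing subcategory of $\mathscr{KK}^{\widehat{\mathbb{G}}}$ (preimage of a localizing subcategory under a triangulated, direct-sum-preserving functor), so it suffices to show it contains every generator $A_0=(C\otimes T)\underset{r}{\rtimes}\mathbb{G}$ of $\mathscr{L}_{\widehat{\mathbb{G}}}$. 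For such $A_0,B_0$, Proposition \ref{pro.DirectProductTensorProduct} supplies a canonical $\widehat{\mathbb{F}}$-equivariant isomorphism
$$\mathcal{Z}(A_0,B_0)=\big((C\otimes T)\underset{r}{\rtimes}\mathbb{G}\big)\otimes\big((C'\otimes S)\underset{r}{\rtimes}\mathbb{H}\big)\cong (C\otimes C'\otimes T\otimes S)\underset{r}{\rtimes}\mathbb{F},$$
and since $T\otimes S$ is a torsion action of $\mathbb{F}$ (Section \ref{sec.QDP}), the right-hand side is a generator of $\mathscr{L}_{\widehat{\mathbb{F}}}$. Hence $\mathcal{Z}_{B_0}(\mathscr{L}_{\widehat{\mathbb{G}}})\subset \mathscr{L}_{\widehat{\mathbb{F}}}$ for every generator $B_0$; fixing now any $A_0\in \mathscr{L}_{\widehat{\mathbb{G}}}$ and repeating the same localizing-preimage argument in the second variable with ${_{A_0}}\mathcal{Z}$ yields $\mathcal{Z}(\mathscr{L}_{\widehat{\mathbb{G}}}\times \mathscr{L}_{\widehat{\mathbb{H}}})\subset \mathscr{L}_{\widehat{\mathbb{F}}}$.

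For the assertions involving $\mathscr{N}$ I would observe that the strong quantum BC hypothesis makes them immediate. If, say, $\widehat{\mathbb{H}}$ satisfies the strong BC property, then $\mathscr{L}_{\widehat{\mathbb{H}}}=\mathscr{KK}^{\widehat{\mathbb{H}}}$, so $\mathscr{N}_{\widehat{\mathbb{H}}}=\mathscr{L}^{\dashv}_{\widehat{\mathbb{H}}}=0$, because an object orthogonal to everything is orthogonal to itself and hence zero; consequently ${_{A_0}}\mathcal{Z}(\mathscr{N}_{\widehat{\mathbb{H}}})=\{0\}\subset \mathscr{N}_{\widehat{\mathbb{F}}}$ and likewise $\mathcal{Z}(\mathscr{N}_{\widehat{\mathbb{G}}}\times \mathscr{N}_{\widehat{\mathbb{H}}})=\{0\}$, the case of strong BC for $\widehat{\mathbb{G}}$ and part (ii) being symmetric. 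I expect this to be the conceptual heart of the statement: a direct orthogonality proof of such an $\mathscr{N}$-inclusion would require testing $\mathcal{Z}(\ \cdot\ )$ against the generators $(D\otimes R)\underset{r}{\rtimes}\mathbb{F}$ of $\mathscr{L}_{\widehat{\mathbb{F}}}$, which involve \emph{arbitrary} torsion actions $R$ of $\mathbb{F}$ that need not split as $T\otimes S$ (the Goursat-type phenomenon discussed in the introduction). The strong BC assumption is precisely what circumvents the missing classification of $\text{Tor}(\widehat{\mathbb{F}})$, by collapsing the relevant $\mathscr{N}$ to the zero subcategory. The only genuinely routine verification left is the $\widehat{\mathbb{F}}$-equivariance of the isomorphism of Proposition \ref{pro.DirectProductTensorProduct} for the dual actions, which follows from the identification $c_0(\widehat{\mathbb{F}})\cong c_0(\widehat{\mathbb{G}})\otimes c_0(\widehat{\mathbb{H}})$.
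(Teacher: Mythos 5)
Your proposal is correct and follows essentially the same route as the paper: the $\mathscr{L}$-inclusion is obtained by sending generators to generators via Proposition \ref{pro.DirectProductTensorProduct} and the observation that $T\otimes S\in\text{Tor}(\widehat{\mathbb{F}})$, then closing up under the localizing operations (a step the paper delegates to \cite[Lemma 5.2.1]{RubenSemiDirect} and you spell out via the preimage argument), while the $\mathscr{N}$-assertions reduce, exactly as in the paper, to the fact that the strong quantum BC property forces $\mathscr{N}_{\widehat{\mathbb{G}}}=0$ or $\mathscr{N}_{\widehat{\mathbb{H}}}=0$, so the relevant images are KK-zero. Your closing remark on why the strong BC hypothesis circumvents the unclassified torsion of $\widehat{\mathbb{F}}$ matches the discussion in Remark \ref{rem.AssumptionStrongBC}.
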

	\begin{proof}
		Most part of the argument used in \cite[Lemma 5.2.1]{RubenSemiDirect} applies. But here we have removed the torsion-freeness assumption, so we need to modify some steps of the proof of \cite[Lemma 5.2.1]{RubenSemiDirect}. First, given an object $(T,\tau)\times (S,\sigma)\in \text{Obj}(\mathscr{KK}^{\mathbb{G}})\times \text{Obj}(\mathscr{KK}^{\mathbb{H}})$ with $(T,\tau)$ a torsion action of $\mathbb{G}$ and $(S, \sigma)$ a torsion action of $\mathbb{H}$, then $(R:=T\otimes S, \rho:=\tau\otimes \sigma)$ is a torsion action of $\mathbb{F}$ as observed in Section \ref{sec.QDP}. Next, using Proposition \ref{pro.DirectProductTensorProduct} we write:
		\begin{equation*}
			\begin{split}
				\mathcal{Z}\big(T\underset{\tau, r}{\rtimes}\mathbb{G}\otimes C_1, S\underset{\sigma, r}{\rtimes}\mathbb{H}\otimes C_2\big)&=T\underset{\tau, r}{\rtimes}\mathbb{G}\otimes C_1\otimes S\underset{\sigma, r}{\rtimes}\mathbb{H}\otimes C_2\\
				&\cong T\underset{\tau, r}{\rtimes}\mathbb{G}\otimes S\underset{\sigma, r}{\rtimes}\mathbb{H}\otimes C_1\otimes C_2\cong R\underset{\rho, r}{\rtimes} \mathbb{F}\otimes C_3\mbox{,}
			\end{split}
		\end{equation*}
		where $C_3:=C_1\otimes C_2\in \text{Obj}(\mathscr{KK})$. Consequently, $\mathcal{Z}(\mathscr{L}_{\widehat{\mathbb{G}}}\times \mathscr{L}_{\widehat{\mathbb{H}}})\subset \mathscr{L}_{\widehat{\mathbb{F}}}$ as in \cite[Lemma 5.2.1]{RubenSemiDirect}. 
		
		Now, if either $\widehat{\mathbb{G}}$ or $\widehat{\mathbb{H}}$ satisfies the strong quantum BC property, then we have by definition that either $\mathscr{KK}^{\widehat{\mathbb{G}}}=\mathscr{L}_{\widehat{\mathbb{G}}}$ or $\mathscr{KK}^{\widehat{\mathbb{H}}}=\mathscr{L}_{\widehat{\mathbb{H}}}$. Since $(\mathscr{L}_{\widehat{\mathbb{G}}}, \mathscr{N}_{\widehat{\mathbb{G}}})$ and $(\mathscr{L}_{\widehat{\mathbb{H}}}, \mathscr{N}_{\widehat{\mathbb{H}}})$ are complementary pairs as explained in Section \ref{sec.QuantumBC}, the latter means that we have either $\mathscr{N}_{\widehat{\mathbb{G}}}=0$ or $\mathscr{N}_{\widehat{\mathbb{H}}}=0$ hence either $\widehat{\mathscr{N}}_{\widehat{\mathbb{G}}}=0$ or $\widehat{\mathscr{N}}_{\widehat{\mathbb{H}}}=0$. In either case, given $(A, \alpha)\in \text{Obj}(\widehat{\mathscr{N}}_{\widehat{\mathbb{G}}})$ and $(B, \beta)\in \text{Obj}(\widehat{\mathscr{N}}_{\widehat{\mathbb{H}}})$ we have $j_{\mathbb{F}, R}(A\otimes B)=(A\otimes B)\underset{r, \alpha\otimes \beta}{\rtimes}\mathbb{F}\underset{r, \overline{\rho}}{\ltimes}R^{op}\cong 0$ in $\mathscr{KK}$, for every torsion action $(R, \rho)$ of $\mathbb{F}$. This means, as explained in Section \ref{sec.QuantumBC}, that $(A\otimes B, \alpha\otimes\beta)\in \text{Obj}(\widehat{\mathscr{N}}_{\widehat{\mathbb{F}}})$. In other words, $\mathcal{Z}(\widehat{\mathscr{N}}_{\widehat{\mathbb{G}}}\times \widehat{\mathscr{N}}_{\widehat{\mathbb{H}}})\subset \widehat{\mathscr{N}}_{\widehat{\mathbb{F}}}$, which yields $\mathcal{Z}(\mathscr{N}_{\widehat{\mathbb{G}}}\times \mathscr{N}_{\widehat{\mathbb{H}}})\subset \mathscr{N}_{\widehat{\mathbb{F}}}$ by virtue of Baaj-Skandalis duality and Proposition \ref{pro.DirectProductTensorProduct}.
				
		To conclude, let us show property $(i)$ of the statement (the proof of property $(ii)$ is similar). It remains to show that, given a $\widehat{\mathbb{G}}$-C$^*$-algebra $(A_0, \alpha_0)\in \text{Obj}(\mathscr{KK}^{\widehat{\mathbb{G}}})$, then the functor ${_{A_0}}\mathcal{Z}$ is triangulated. For this, the analogous argument as in \cite[Lemma 5.2.1]{RubenSemiDirect} applies.
	\end{proof}
	
	The previous lemma allows to improve \cite[Lemma 5.2.2]{RubenSemiDirect} by removing the torsion-freeness assumption as follows.
	\begin{lem}\label{lem.InvertibleElementDirectProduct}
		Let $\mathbb{G}$, $\mathbb{H}$ be two compact quantum groups and let $\mathbb{F}:=\mathbb{G}\times \mathbb{H}$ be the corresponding quantum direct product of $\mathbb{G}$ and $\mathbb{H}$.
		
		\begin{enumerate}[i)]
			\item For all $\widehat{\mathbb{G}}$-C$^*$-algebra $(A,\alpha)$ and all $\widehat{\mathbb{H}}$-C$^*$-algebra $(B,\beta)$ there exists a Kasparov triple $\psi\in KK^{\widehat{\mathbb{F}}}\big(L'(A)\otimes L''(B), L(A\otimes B)\big)$ such that the diagram:
		\begin{equation*}\label{eq.CommutativeDiagramDirectProduct}
		\begin{gathered}
			\xymatrix@C=20mm@!R=15mm{
				\mbox{$L'(A)\underset{r}{\rtimes}\widehat{\mathbb{G}}\otimes L''(B) \underset{r}{\rtimes}\widehat{\mathbb{H}}$}\ar[d]_{\mbox{$ u'\underset{r}{\rtimes}\widehat{\mathbb{G}}\otimes u''\underset{r}{\rtimes}\widehat{\mathbb{H}}$}}\ar[r]^-{\mbox{$\Psi$}}&\mbox{$L(A\otimes B) \underset{r}{\rtimes}\widehat{\mathbb{F}}$}\ar[d]^{\mbox{$u \underset{r}{\rtimes}\widehat{\mathbb{F}}$}}\\
				\mbox{$A\underset{r}{\rtimes}\widehat{\mathbb{G}}\otimes B\underset{r}{\rtimes}\widehat{\mathbb{H}}$}\ar[r]^{\mbox{$\cong$}}&\mbox{$(A\otimes B)\underset{r}{\rtimes}\widehat{\mathbb{F}}$}}
		\end{gathered}
		\end{equation*}
		is commutative where $\Psi:=\psi\underset{r}{\rtimes}\widehat{\mathbb{F}}$ and $u'$, $u''$, $u$ are the Dirac homomorphisms for $A$, $B$, $A\otimes B$, respectively.
			\item Assume that $\widehat{\mathbb{H}}$ satisfies the strong quantum BC property. For all $\widehat{\mathbb{G}}$-C$^*$-algebra $(A_0,\alpha_0)\in \mathscr{L}_{\widehat{\mathbb{G}}}$ and all $\widehat{\mathbb{H}}$-C$^*$-algebra $(B,\beta)$ there exists an \emph{invertible} Kasparov triple ${_{A_0}}\psi\in KK^{\widehat{\mathbb{F}}}\big(A_0\otimes L''(B), L(A_0\otimes B)\big)$ such that the diagram: 
		\begin{equation*}\label{eq.CommutativeDiagramDirectProductbisH}
		\begin{gathered}
			\xymatrix@C=20mm@!R=15mm{
				\mbox{$A_0\underset{r}{\rtimes}\widehat{\mathbb{G}}\otimes L''(B)\underset{r}{\rtimes}\widehat{\mathbb{H}}$}\ar[d]_{\mbox{${_{A_0}}\mathcal{Z}(u'')\underset{r}{\rtimes}\widehat{\mathbb{F}}$}}\ar[r]^-{\mbox{$\underset{\sim}{{_{A_0}}\Psi}$}}&\mbox{$L(A_0\otimes B)\underset{r}{\rtimes}\widehat{\mathbb{F}}$}\ar[d]^{\mbox{$u\underset{r}{\rtimes}\widehat{\mathbb{F}}$}}\\
				\mbox{$A_0\underset{r}{\rtimes}\widehat{\mathbb{G}}\otimes B\underset{r}{\rtimes}\widehat{\mathbb{H}}$}\ar[r]^{\mbox{$\cong$}}&\mbox{$(A_0\otimes B)\underset{r}{\rtimes}\widehat{\mathbb{F}}$}}
		\end{gathered}
		\end{equation*}
		is commutative where ${_{A_0}}\Psi:={_{A_0}}\psi\underset{r}{\rtimes}\widehat{\mathbb{F}}$ and $u''$, $u$ are the Dirac homomorphism for $B$, $A_0\otimes B$, respectively.
		\item Assume that $\widehat{\mathbb{G}}$ satisfies the strong quantum BC property. For all $\widehat{\mathbb{H}}$-C$^*$-algebra $(B_0,\beta_0)\in \mathscr{L}_{\widehat{\mathbb{H}}}$ and all $\widehat{\mathbb{G}}$-C$^*$-algebra $(A,\alpha)$ there exists an \emph{invertible} Kasparov triple $\psi_{B_0}\in KK^{\widehat{\mathbb{F}}}\big(L'(A)\otimes B_0, L(A\otimes B_0)\big)$ such that the diagram: 
		\begin{equation*}\label{eq.CommutativeDiagramDirectProductbisG}
		\begin{gathered}
			\xymatrix@C=20mm@!R=15mm{
				\mbox{$L(A)\underset{r}{\rtimes}\widehat{\mathbb{G}}\otimes B_0\underset{r}{\rtimes}\widehat{\mathbb{H}}$}\ar[d]_{\mbox{$\mathcal{Z}_{B_0}(u')\underset{r}{\rtimes}\widehat{\mathbb{F}}$}}\ar[r]^-{\mbox{$\underset{\sim}{\Psi_{B_0}}$}}&\mbox{$L(A\otimes B_0)\underset{r}{\rtimes}\widehat{\mathbb{F}}$}\ar[d]^{\mbox{$u\underset{r}{\rtimes} \widehat{\mathbb{F}}$}}\\
				\mbox{$A\underset{r}{\rtimes}\widehat{\mathbb{G}}\otimes B_0 \underset{r}{\rtimes}\widehat{\mathbb{H}}$}\ar[r]^{\mbox{$\cong$}}&\mbox{$(A\otimes B_0)\underset{r}{\rtimes}\widehat{\mathbb{F}}$}}
		\end{gathered}
		\end{equation*}
		is commutative where $\Psi_{B_0}:=\psi_{B_0}\underset{r}{\rtimes}\widehat{\mathbb{F}}$ and $u'$, $u$ are the Dirac homomorphism for $A$, $A\otimes B_0$, respectively
		\end{enumerate}
	\end{lem}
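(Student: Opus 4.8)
The plan is to construct $\psi$ directly from the universal property of the complementary pair $(\mathscr{L}_{\widehat{\mathbb{F}}}, \mathscr{N}_{\widehat{\mathbb{F}}})$ and then transport its defining relation through the descent functor. First I would record that, by Lemma \ref{lem.FunctorPreservingLDirectProduct}, the object $L'(A)\otimes L''(B)=\mathcal{Z}(L'(A),L''(B))$ lies in $\mathscr{L}_{\widehat{\mathbb{F}}}$, since $L'(A)\in\mathscr{L}_{\widehat{\mathbb{G}}}$ and $L''(B)\in\mathscr{L}_{\widehat{\mathbb{H}}}$. Applying the functor $KK^{\widehat{\mathbb{F}}}(L'(A)\otimes L''(B),\ \cdot\ )$ to the $(\mathscr{L}_{\widehat{\mathbb{F}}},\mathscr{N}_{\widehat{\mathbb{F}}})$-triangle $\Sigma N(A\otimes B)\to L(A\otimes B)\overset{u}{\to} A\otimes B\to N(A\otimes B)$, and using that $KK^{\widehat{\mathbb{F}}}$ vanishes from objects of $\mathscr{L}_{\widehat{\mathbb{F}}}$ to objects of $\mathscr{N}_{\widehat{\mathbb{F}}}$ (and that $\mathscr{N}_{\widehat{\mathbb{F}}}$ is stable under suspension), I find that right-composition with $u$ is an isomorphism. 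I would then define $\psi\in KK^{\widehat{\mathbb{F}}}(L'(A)\otimes L''(B), L(A\otimes B))$ as the unique preimage of the exterior product $\mathcal{Z}(u',u'')=u'\otimes u''$, so that by construction $\psi\underset{L(A\otimes B)}{\otimes} u=u'\otimes u''$ in $KK^{\widehat{\mathbb{F}}}$.

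The commutative diagram would then be obtained by applying the descent functor $(\ \cdot\ )\underset{r}{\rtimes}\widehat{\mathbb{F}}$ to this identity. Since descent is functorial, the right-hand composite equals $(u'\otimes u'')\underset{r}{\rtimes}\widehat{\mathbb{F}}$, and the heart of the matter is to identify this with the exterior tensor product $(u'\underset{r}{\rtimes}\widehat{\mathbb{G}})\otimes (u''\underset{r}{\rtimes}\widehat{\mathbb{H}})$ under the canonical isomorphism of Proposition \ref{pro.DirectProductTensorProduct}. I expect this to be the main obstacle: it is the assertion that descent along $\widehat{\mathbb{F}}$ is monoidal with respect to $\mathcal{Z}$ and the minimal tensor product, i.e. that the Baaj-Skandalis descent of an exterior Kasparov product coincides, under $c_0(\widehat{\mathbb{F}})\cong c_0(\widehat{\mathbb{G}})\otimes c_0(\widehat{\mathbb{H}})$, with the exterior Kasparov product of the descents. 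This is a naturality statement in the same spirit as Propositions \ref{pro.DirectProductTensorProduct} and \ref{pro.TwoSidedDirectProd}, and I would verify it by unwinding the definition of $\mathcal{Z}$ as a left exterior tensor followed by a right one and a Kasparov product, then checking factor by factor that each piece is carried to the expected one; the bilinearity and functoriality of $\widetilde{\tau}$ recorded in Equation (\ref{eq.TensorProdKasparovProd}) are exactly what makes this bookkeeping close up.

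For part $(ii)$ I would specialise part $(i)$ to $A=A_0\in\mathscr{L}_{\widehat{\mathbb{G}}}$, for which the Dirac homomorphism $u'$ is a $KK^{\widehat{\mathbb{G}}}$-equivalence, so $\mathcal{Z}(u',u'')$ reduces to ${_{A_0}}\mathcal{Z}(u'')$. The strong quantum BC property for $\widehat{\mathbb{H}}$ means $\mathscr{KK}^{\widehat{\mathbb{H}}}=\mathscr{L}_{\widehat{\mathbb{H}}}$, equivalently $\mathscr{N}_{\widehat{\mathbb{H}}}=0$; hence $u''$ is itself a $KK^{\widehat{\mathbb{H}}}$-equivalence and $B\cong L''(B)$ lies in $\mathscr{L}_{\widehat{\mathbb{H}}}$. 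By Lemma \ref{lem.FunctorPreservingLDirectProduct} the object $A_0\otimes B$ then lies in $\mathscr{L}_{\widehat{\mathbb{F}}}$, whence its Dirac homomorphism $u$ is also an equivalence. Consequently the lift ${_{A_0}}\psi:=\psi$ produced in part $(i)$ satisfies ${_{A_0}}\psi\underset{L(A_0\otimes B)}{\otimes}u={_{A_0}}\mathcal{Z}(u'')$ with both $u$ and ${_{A_0}}\mathcal{Z}(u'')$ invertible, which forces ${_{A_0}}\psi$ to be invertible as well; the commuting diagram is the corresponding specialisation of part $(i)$. Part $(iii)$ follows by the identical argument with the roles of $\widehat{\mathbb{G}}$ and $\widehat{\mathbb{H}}$ interchanged, using the triangulated functor $\mathcal{Z}_{B_0}$ and the strong BC hypothesis on $\widehat{\mathbb{G}}$ in place of $\widehat{\mathbb{H}}$.
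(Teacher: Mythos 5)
Your proposal is correct. For part $(i)$ it is essentially the paper's argument (which is outsourced to \cite[Lemma 5.2.2]{RubenSemiDirect}): one uses Lemma \ref{lem.FunctorPreservingLDirectProduct} to place $L'(A)\otimes L''(B)$ in $\mathscr{L}_{\widehat{\mathbb{F}}}$, the orthogonality $KK^{\widehat{\mathbb{F}}}(\mathscr{L}_{\widehat{\mathbb{F}}},\mathscr{N}_{\widehat{\mathbb{F}}})=0$ to lift $\mathcal{Z}(u',u'')$ uniquely through $u$, and then the compatibility of descent with exterior Kasparov products under $c_0(\widehat{\mathbb{F}})\cong c_0(\widehat{\mathbb{G}})\otimes c_0(\widehat{\mathbb{H}})$ --- the verification you correctly isolate as the only nontrivial bookkeeping, and which is the $KK$-level refinement of Proposition \ref{pro.DirectProductTensorProduct}. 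Where you genuinely diverge is in parts $(ii)$ and $(iii)$: the paper's route (following \cite[Lemma 5.2.2]{RubenSemiDirect} together with the inclusion ${_{A_0}}\mathcal{Z}(\mathscr{N}_{\widehat{\mathbb{H}}})\subset\mathscr{N}_{\widehat{\mathbb{F}}}$ supplied by Lemma \ref{lem.FunctorPreservingLDirectProduct}) applies the triangulated functor ${_{A_0}}\mathcal{Z}$ to the $(\mathscr{L}_{\widehat{\mathbb{H}}},\mathscr{N}_{\widehat{\mathbb{H}}})$-triangle of $B$, checks that the image is a $(\mathscr{L}_{\widehat{\mathbb{F}}},\mathscr{N}_{\widehat{\mathbb{F}}})$-triangle for $A_0\otimes B$, and concludes by uniqueness of such triangles; you instead observe that strong BC for $\widehat{\mathbb{H}}$ forces $\mathscr{N}_{\widehat{\mathbb{H}}}=0$, so that $u''$ and $u$ are already invertible and the lift from part $(i)$ is invertible for free. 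Your shortcut is valid and shorter under the stated hypothesis, but the triangle-uniqueness formulation is the one that survives when strong BC is replaced by the weaker requirement that ${_{A_0}}\mathcal{Z}$ merely preserve the subcategories $\mathscr{N}_{*}$ --- which is exactly how the paper later obtains Proposition \ref{pro.InvertibleElementDirectProduct} (the case $\mathbb{H}=\mathbb{E}$) \emph{without} assuming strong BC for $\widehat{\mathbb{G}}$. So the two arguments coincide in substance here, but the paper's version is the one that generalises.
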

	\begin{proof}
		The proof of item $(i)$ follows the same argument as the one in item $(i)$ of \cite[Lemma 5.2.2]{RubenSemiDirect}. For item $(ii)$, since we assume that $\widehat{\mathbb{H}}$ satisfies the strong quantum BC property, one has ${_{A_0}}\mathcal{Z}(\mathscr{N}_{\widehat{\mathbb{H}}})\subset \mathscr{N}_{\widehat{\mathbb{F}}}$, for all $(A_0, \alpha_0)\in \text{Obj}(\mathscr{KK}^{\widehat{\mathbb{G}}})$ thanks to Lemma \ref{lem.FunctorPreservingLDirectProduct}. Then, the argument appearing in item $(ii)$ of \cite[Lemma 5.2.2]{RubenSemiDirect} can be applied verbatim. Finally, item $(iii)$ is the symmetric statement as item $(ii)$ by exchanging $\widehat{\mathbb{G}}$ with $\widehat{\mathbb{H}}$.
	\end{proof}

	 A special case of the previous lemma, which is important for our purpose, is the situation when $\mathbb{H}:=\mathbb{E}$ is the trivial quantum group. In this case, one has $\mathbb{F}=\mathbb{G}$ and so $(L, N)=(L', N')$. Given a $\widehat{\mathbb{G}}$-C$^*$-algebra $(A,\alpha)$, consider its $(\mathscr{L}_{\widehat{\mathbb{G}}}, \mathscr{N}_{\widehat{\mathbb{G}}})$-triangle in $\mathscr{KK}^{\widehat{\mathbb{G}}}$, say $\Sigma(N(A))\rightarrow L(A)\rightarrow A\rightarrow N(A)$. Next, given a C$^*$-algebra $B\in\text{Obj}(\mathscr{KK}^{\mathbb{E}})$ we apply the triangulated functor $\mathcal{Z}_{B}$ (cf. Lemma \ref{lem.FunctorPreservingLDirectProduct}) to this triangle and obtain the distinguished triangle $\Sigma(N(A)\otimes B)\rightarrow L(A)\otimes B\rightarrow A\otimes B\rightarrow N(A)\otimes B$ in $\mathscr{KK}^{\widehat{\mathbb{G}}}$.
	 
	 On the one hand, $\mathscr{KK}^{\mathbb{E}}=\mathscr{L}_{\mathbb{E}}$ (because the trivial quantum group satisfies the strong quantum BC property), hence $B\in\mathscr{L}_{\mathbb{E}}$ and so $L(A)\otimes B\in\mathscr{L}_{\widehat{\mathbb{G}}}$ (cf. Lemma \ref{lem.FunctorPreservingLDirectProduct}). On the other hand, one also has $N(A)\otimes B\in\mathscr{N}_{\widehat{\mathbb{G}}}$. This is true because if $\mathbb{H}:=\mathbb{E}$ is the trivial quantum group, then $\text{Tor}(\widehat{\mathbb{F}})=\text{Tor}(\widehat{\mathbb{G}})$ and $\mathcal{Z}(\widehat{\mathscr{N}}_{\widehat{\mathbb{G}}}\times \mathscr{KK})\subset \widehat{\mathscr{N}}_{\widehat{\mathbb{G}}}$. Indeed, if $(N, \delta)\in \text{Obj}(\widehat{\mathscr{N}}_{\widehat{\mathbb{G}}})$ and $B\in\text{Obj}(\mathscr{KK})$, then $j_{\mathbb{G}, T}(N\otimes B)=(N\otimes B)\underset{r, \delta\otimes id}{\rtimes}\mathbb{G}\underset{r, \overline{\tau}}{\ltimes}T^{op}\cong (N\underset{r, \delta}{\rtimes}\mathbb{G}\underset{r, \overline{\tau}}{\ltimes}T^{op})\otimes B\cong 0$ in $\mathscr{KK}$, for every torsion action $(T, \tau)$ of $\mathbb{G}$; where we have used Proposition \ref{pro.TwoSidedDirectProd}. In other words, $\mathcal{Z}(\widehat{\mathscr{N}}_{\widehat{\mathbb{G}}}\times \mathscr{KK})\subset \widehat{\mathscr{N}}_{\widehat{\mathbb{G}}}$, which yields $\mathcal{Z}(\mathscr{N}_{\widehat{\mathbb{G}}}\times \mathscr{KK})\subset \mathscr{N}_{\widehat{\mathbb{G}}}$ by virtue of Baaj-Skandalis duality. Hence $N(A)\otimes B\in\mathscr{N}_{\widehat{\mathbb{G}}}$ as claimed above.
	 
	 In conclusion, we have showed that the distinguished triangle $\Sigma(N(A)\otimes B)\rightarrow L(A)\otimes B\rightarrow A\otimes B\rightarrow N(A)\otimes B$ in $\mathscr{KK}^{\widehat{\mathbb{G}}}$ is a $(\mathscr{L}_{\widehat{\mathbb{G}}}, \mathscr{N}_{\widehat{\mathbb{G}}})$-triangle in $\mathscr{KK}^{\widehat{\mathbb{G}}}$ for $A\otimes B$. Finally, the uniqueness of this kind of triangles together with Proposition \ref{pro.DirectProductTensorProduct} yield the following: 
	 
	\begin{pro}\label{pro.InvertibleElementDirectProduct}
		Let $\mathbb{G}$ be a compact quantum group. For all $\widehat{\mathbb{G}}$-C$^*$-algebra $(A,\alpha)\in \text{Obj}(\mathscr{KK}^{\widehat{\mathbb{G}}})$ and all C$^*$-algebra $B\in \text{Obj}(\mathscr{KK})$ there exists an \emph{invertible} Kasparov triple $\psi_B\in KK^{\widehat{\mathbb{G}}}\big(L(A)\otimes B, L(A\otimes B)\big)$ such that the diagram: 
		\begin{equation}\label{eq.CommutativeDiagramDirectProductbisGNoH}
		\begin{gathered}
			\xymatrix@C=20mm@!R=15mm{
				\mbox{$ L(A)\underset{r}{\rtimes}\widehat{\mathbb{G}}\otimes B$}\ar[d]_{\mbox{$\mathcal{Z}_{B_0}(u')\underset{r}{\rtimes}\widehat{\mathbb{G}} $}}\ar[r]^-{\mbox{$\underset{\sim}{\Psi_{B}}$}}&\mbox{$ L(A\otimes B)\underset{r}{\rtimes}\widehat{\mathbb{G}}$}\ar[d]^{\mbox{$u\underset{r}{\rtimes} \widehat{\mathbb{G}}$}}\\
				\mbox{$ A\underset{r}{\rtimes}\widehat{\mathbb{G}}\otimes B$}\ar[r]^{\mbox{$\cong$}}&\mbox{$(A\otimes B)\underset{r}{\rtimes}\widehat{\mathbb{G}}$}}
		\end{gathered}
		\end{equation}
		is commutative where $\Psi_{B}:=\psi_{B}\underset{r}{\rtimes}\widehat{\mathbb{G}}$ and $u'$, $u$ are the Dirac homomorphism for $A$, $A\otimes B$, respectively.
	\end{pro}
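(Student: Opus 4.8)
The plan is to realise the triangle obtained by tensoring the $(\mathscr{L}_{\widehat{\mathbb{G}}}, \mathscr{N}_{\widehat{\mathbb{G}}})$-triangle of $A$ with $B$ as an $(\mathscr{L}_{\widehat{\mathbb{G}}}, \mathscr{N}_{\widehat{\mathbb{G}}})$-triangle for $A\otimes B$, and then to invoke uniqueness of such triangles up to isomorphism. Concretely, I would start from the $(\mathscr{L}_{\widehat{\mathbb{G}}}, \mathscr{N}_{\widehat{\mathbb{G}}})$-triangle $\Sigma(N(A))\rightarrow L(A)\overset{u'}{\rightarrow} A\rightarrow N(A)$ associated to $A$ and apply the triangulated functor $\mathcal{Z}_{B}=(\,\cdot\,)\otimes B$ of Lemma \ref{lem.FunctorPreservingLDirectProduct}, specialised to $\mathbb{H}=\mathbb{E}$, $\mathbb{F}=\mathbb{G}$ (so that $(L',N')=(L,N)$). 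Since $\mathcal{Z}_{B}$ is triangulated, this produces a distinguished triangle $\Sigma(N(A)\otimes B)\rightarrow L(A)\otimes B\rightarrow A\otimes B\rightarrow N(A)\otimes B$ in $\mathscr{KK}^{\widehat{\mathbb{G}}}$.

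The heart of the argument is to verify that the outer vertices of this new triangle land in the correct subcategories. For the $\mathscr{L}$-vertex I would use that the trivial quantum group satisfies the strong quantum BC property, whence $B\in\mathscr{KK}^{\mathbb{E}}=\mathscr{L}_{\mathbb{E}}$, and then invoke the inclusion $\mathcal{Z}(\mathscr{L}_{\widehat{\mathbb{G}}}\times\mathscr{L}_{\mathbb{E}})\subset\mathscr{L}_{\widehat{\mathbb{G}}}$ from Lemma \ref{lem.FunctorPreservingLDirectProduct} to conclude $L(A)\otimes B\in\mathscr{L}_{\widehat{\mathbb{G}}}$. For the $\mathscr{N}$-vertex I would pass to the Baaj-Skandalis dual picture: for any torsion action $(T,\tau)$ of $\mathbb{G}$ and any $(N,\delta)\in\widehat{\mathscr{N}}_{\widehat{\mathbb{G}}}$, Proposition \ref{pro.TwoSidedDirectProd} gives $j_{\mathbb{G},T}(N\otimes B)\cong (N\underset{r,\delta}{\rtimes}\mathbb{G}\underset{r,\overline{\tau}}{\ltimes}T^{op})\otimes B\cong 0$ in $\mathscr{KK}$, the first factor vanishing by the characterisation $\widehat{\mathscr{N}}_{\widehat{\mathbb{G}}}=\ker_{\text{Obj}}\big((j_{\mathbb{G},T})_{T}\big)$. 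This yields $\mathcal{Z}(\widehat{\mathscr{N}}_{\widehat{\mathbb{G}}}\times\mathscr{KK})\subset\widehat{\mathscr{N}}_{\widehat{\mathbb{G}}}$, and Baaj-Skandalis duality transports it to $N(A)\otimes B\in\mathscr{N}_{\widehat{\mathbb{G}}}$.

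With both membership conditions established, the tensored triangle is an $(\mathscr{L}_{\widehat{\mathbb{G}}}, \mathscr{N}_{\widehat{\mathbb{G}}})$-triangle for $A\otimes B$. Since such triangles are distinguished and unique up to isomorphism (cf. Note \ref{note.DiracHom} and the Meyer-Nest machinery), comparing it with the canonical triangle $\Sigma(N(A\otimes B))\rightarrow L(A\otimes B)\overset{u}{\rightarrow} A\otimes B\rightarrow N(A\otimes B)$ furnishes an invertible $\psi_{B}\in KK^{\widehat{\mathbb{G}}}(L(A)\otimes B, L(A\otimes B))$ whose composite with the Dirac homomorphism $u$ for $A\otimes B$ recovers the tensored Dirac homomorphism $\mathcal{Z}_B(u')=u'\otimes id_B$ of $A$. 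Finally I would apply the crossed-product functor $(\,\cdot\,)\underset{r}{\rtimes}\widehat{\mathbb{G}}$ to this commuting square and identify $A\underset{r}{\rtimes}\widehat{\mathbb{G}}\otimes B\cong (A\otimes B)\underset{r}{\rtimes}\widehat{\mathbb{G}}$ via Proposition \ref{pro.DirectProductTensorProduct} (with $\mathbb{H}=\mathbb{E}$ and $B$ carrying the trivial action), which gives the asserted commutative diagram with $\Psi_{B}:=\psi_{B}\underset{r}{\rtimes}\widehat{\mathbb{G}}$.

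I expect the $\mathscr{N}$-vertex step to be the main obstacle: one must ensure that tensoring (on the right, by a trivially acting $B$) with an object of $\widehat{\mathscr{N}}_{\widehat{\mathbb{G}}}$ stays inside $\widehat{\mathscr{N}}_{\widehat{\mathbb{G}}}$, which is precisely where the compatibility of the two-sided crossed product with tensor products (Proposition \ref{pro.TwoSidedDirectProd}) and the kernel description of $\widehat{\mathscr{N}}_{\widehat{\mathbb{G}}}$ are indispensable. By contrast, the $\mathscr{L}$-vertex is immediate from the strong BC property of $\mathbb{E}$, and the remainder is formal triangulated-category bookkeeping together with the crossed-product identification.
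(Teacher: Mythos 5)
Your proposal is correct and follows essentially the same route as the paper: tensoring the $(\mathscr{L}_{\widehat{\mathbb{G}}}, \mathscr{N}_{\widehat{\mathbb{G}}})$-triangle of $A$ by $B$ via the triangulated functor $\mathcal{Z}_B$, verifying the $\mathscr{L}$-vertex through $\mathscr{KK}^{\mathbb{E}}=\mathscr{L}_{\mathbb{E}}$ and Lemma \ref{lem.FunctorPreservingLDirectProduct}, verifying the $\mathscr{N}$-vertex through the kernel description of $\widehat{\mathscr{N}}_{\widehat{\mathbb{G}}}$, Proposition \ref{pro.TwoSidedDirectProd} and Baaj--Skandalis duality, and concluding by uniqueness of $(\mathscr{L}_{\widehat{\mathbb{G}}}, \mathscr{N}_{\widehat{\mathbb{G}}})$-triangles together with Proposition \ref{pro.DirectProductTensorProduct}. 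This matches the argument given in the paper immediately preceding the statement.
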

	\begin{rem}\label{rem.AssumptionStrongBC}
		The previous corollary is a refined version of item $(iii)$ of Lemma \ref{lem.InvertibleElementDirectProduct}. The main observation in the case of $\mathbb{H}=\mathbb{E}$ is that we do not need to further assume that $\widehat{\mathbb{G}}$ satisfies the strong quantum BC property. As the argument above showed, this is possible because the torsion of $\widehat{\mathbb{F}}$ reduces to the torsion of $\widehat{\mathbb{G}}$ because $\mathbb{F}=\mathbb{G}$ as soon as $\mathbb{H}$ is the trivial quantum group.
		
		The additional hypothesis about the strong quantum BC property in Lemma \ref{lem.InvertibleElementDirectProduct} is necessary in order to use that the functor $\mathcal{Z}$ preserves the subcategories $\mathscr{N}_{*}$ as argued in Lemma \ref{lem.FunctorPreservingLDirectProduct}. This property holds automatically for classical locally compact groups (cf. \cite[Section 10.4]{MeyerNest}). In the quantum setting, the main obstacle is the understanding of the torsion phenomenon for a quantum direct product. Namely, as explained in Section \ref{sec.QuantumBC}, the subcategory $\widehat{\mathscr{N}}_{\widehat{\mathbb{F}}}$ is completely described by the vanishing of the functors $j_{\mathbb{F}, R}(\ \cdot\ )$ with $R\in \text{Tor}(\widehat{\mathbb{F}})$. It would be interesting to know the description of $\text{Tor}(\widehat{\mathbb{F}})$ in terms of $\text{Tor}(\widehat{\mathbb{G}})$ and $\text{Tor}(\widehat{\mathbb{H}})$ (as explained in the introduction, it is the subject of a subsequent paper) and then the behaviour of these C$^*$-algebras with respect to the two-sided crossed product construction, which is out of the scope of the present paper. This is in line with Proposition \ref{pro.InvertibleElementDirectProduct} above. 
		
		In general, the strong quantum BC property assumption allows to overlook this issue and in practice it is not too restrictive. On the one hand, all the examples of compact quantum groups studied in this context have duals satisfying the strong quantum BC property (cf. Section \ref{sec.KTheoryComp}). On the other hand, the permanence property established in Theorem \ref{theo.BCDirectProducts} (which is analogue to \cite[Theorem 5.3]{ChabertEchterhoffOyono}) requires an additional assumption with respect to the classical setting due to our approach to the equivariant Künneth class in the quantum setting (cf. Section \ref{sec.QKunnethClass}). This additional assumption is automatically fulfilled as soon as the strong quantum BC property holds (see Remark \ref{rem.FurtherHypoth2} for more details).
	\end{rem}
	
	Another consequence of Lemma \ref{lem.InvertibleElementDirectProduct} is that a quantum direct product satisfies the quantum BC property with coefficients in tensor products as soon as the quantum groups involved satisfy the strong quantum BC property. This result is the content of \cite[Theorem 5.2.3-(i)]{RubenSemiDirect} under torsion-freeness assumption. Here we are able to improve it by removing this assumption. The argument, as based on Lemma \ref{lem.InvertibleElementDirectProduct}, remains the same as the one in \cite[Theorem 5.2.3-(i)]{RubenSemiDirect} (see also  \cite[Remark 5.2.4]{RubenSemiDirect}). So, we have:
	\begin{theo}\label{theo.StrongBCDirectProd}
		Let $\mathbb{G}$, $\mathbb{H}$ be two compact quantum groups and let $\mathbb{F}:=\mathbb{G}\times \mathbb{H}$ be the corresponding quantum direct product of $\mathbb{G}$ and $\mathbb{H}$. If $\widehat{\mathbb{G}}$ and $\widehat{\mathbb{H}}$ satisfy the strong quantum BC property, then $\widehat{\mathbb{F}}$ satisfies the quantum BC property with coefficients in $A\otimes B$, for every $A\in \text{Obj}(\mathscr{KK}^{\widehat{\mathbb{G}}})$ and $B\in \text{Obj}(\mathscr{KK}^{\widehat{\mathbb{H}}})$. Moreover, $L(A\otimes B)\cong A\otimes B$, for every $A\in \text{Obj}(\mathscr{KK}^{\widehat{\mathbb{G}}})$ and $B\in \text{Obj}(\mathscr{KK}^{\widehat{\mathbb{H}}})$.%$\widehat{\mathbb{F}}$ satisfies the strong quantum BC property. 
	\end{theo}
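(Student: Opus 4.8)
The plan is to deduce the entire statement from the single observation that $A\otimes B$ lies in $\mathscr{L}_{\widehat{\mathbb{F}}}$; both assertions are then immediate. First I would unwind the hypotheses: by definition, the strong quantum BC property for $\widehat{\mathbb{G}}$ and $\widehat{\mathbb{H}}$ means $\mathscr{KK}^{\widehat{\mathbb{G}}}=\mathscr{L}_{\widehat{\mathbb{G}}}$ and $\mathscr{KK}^{\widehat{\mathbb{H}}}=\mathscr{L}_{\widehat{\mathbb{H}}}$, so that \emph{every} $A\in\text{Obj}(\mathscr{KK}^{\widehat{\mathbb{G}}})$ lies in $\mathscr{L}_{\widehat{\mathbb{G}}}$ and \emph{every} $B\in\text{Obj}(\mathscr{KK}^{\widehat{\mathbb{H}}})$ lies in $\mathscr{L}_{\widehat{\mathbb{H}}}$. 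Feeding the pair $(A,B)$ into the inclusion $\mathcal{Z}(\mathscr{L}_{\widehat{\mathbb{G}}}\times\mathscr{L}_{\widehat{\mathbb{H}}})\subset\mathscr{L}_{\widehat{\mathbb{F}}}$ from Lemma \ref{lem.FunctorPreservingLDirectProduct} then gives $A\otimes B=\mathcal{Z}(A,B)\in\mathscr{L}_{\widehat{\mathbb{F}}}$.

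Next I would cash this in. Since $(\mathscr{L}_{\widehat{\mathbb{F}}},\mathscr{N}_{\widehat{\mathbb{F}}})$ is a complementary pair and $A\otimes B\in\mathscr{L}_{\widehat{\mathbb{F}}}$, the triangle $0\longrightarrow A\otimes B\overset{\text{id}}{\longrightarrow}A\otimes B\longrightarrow 0$ is a $(\mathscr{L}_{\widehat{\mathbb{F}}},\mathscr{N}_{\widehat{\mathbb{F}}})$-triangle for $A\otimes B$ (its $\mathscr{L}$-part is $A\otimes B$ and its $\mathscr{N}$-part is $0$). By the uniqueness of such triangles up to isomorphism (cf. Note \ref{note.DiracHom}) this yields $L(A\otimes B)\cong A\otimes B$, which is the \emph{moreover} clause, and it shows that the Dirac homomorphism $u\colon L(A\otimes B)\to A\otimes B$ is an isomorphism in $\mathscr{KK}^{\widehat{\mathbb{F}}}$. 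Applying the reduced crossed product functor, $u\underset{r}{\rtimes}\widehat{\mathbb{F}}$ is a $KK$-equivalence, hence the assembly map $\eta^{\widehat{\mathbb{F}}}_{A\otimes B}=F(u)=K_*\big(u\underset{r}{\rtimes}\widehat{\mathbb{F}}\big)$ is an isomorphism; that is, $\widehat{\mathbb{F}}$ satisfies the quantum BC property with coefficients in $A\otimes B$.

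Alternatively — and this is the route announced via Lemma \ref{lem.InvertibleElementDirectProduct} — one may read the BC conclusion directly off the commutative square of item $(ii)$ of that lemma (applicable since $\widehat{\mathbb{H}}$ has strong BC and $A\in\mathscr{L}_{\widehat{\mathbb{G}}}$): the Dirac homomorphisms $u'$ and $u''$ are isomorphisms under the strong BC hypotheses, so both the top arrow ${_{A}}\Psi$ and the left arrow ${_{A}}\mathcal{Z}(u'')\underset{r}{\rtimes}\widehat{\mathbb{F}}$ are invertible, whence commutativity forces $u\underset{r}{\rtimes}\widehat{\mathbb{F}}$ to be an isomorphism; invertibility of ${_{A}}\psi$ combined with functoriality of ${_{A}}\mathcal{Z}$ recovers $L(A\otimes B)\cong A\otimes L''(B)\cong A\otimes B$. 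In either presentation the theorem is a formal corollary of the preceding lemmas, so there is no substantial obstacle left at this stage: all the real difficulty — in particular the removal of the torsion-freeness hypothesis of \cite{RubenSemiDirect} — has already been absorbed into the proof that $\mathcal{Z}$ preserves the subcategories $\mathscr{L}_*$. I would stress that for the present statement only the \emph{unconditional} preservation of $\mathscr{L}_*$ is used; the more delicate preservation of $\mathscr{N}_*$, which genuinely needs the strong BC assumption, is not required here.
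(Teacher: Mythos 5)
Your primary argument is correct, and it is genuinely more direct than the route the paper takes. The paper proves Theorem \ref{theo.StrongBCDirectProd} by invoking the argument of \cite[Theorem 5.2.3-(i)]{RubenSemiDirect}, which rests on the commutative squares of Lemma \ref{lem.InvertibleElementDirectProduct} (essentially your ``alternative'' route): one identifies $L'(A)\otimes L''(B)\cong L(A\otimes B)$ via the invertible triple $\psi$ and reads off the invertibility of $u\underset{r}{\rtimes}\widehat{\mathbb{F}}$ from the other three sides of the square. Your first route bypasses this entirely: strong BC for $\widehat{\mathbb{G}}$ and $\widehat{\mathbb{H}}$ puts every $A$ in $\mathscr{L}_{\widehat{\mathbb{G}}}$ and every $B$ in $\mathscr{L}_{\widehat{\mathbb{H}}}$, the unconditional inclusion $\mathcal{Z}(\mathscr{L}_{\widehat{\mathbb{G}}}\times\mathscr{L}_{\widehat{\mathbb{H}}})\subset\mathscr{L}_{\widehat{\mathbb{F}}}$ of Lemma \ref{lem.FunctorPreservingLDirectProduct} gives $A\otimes B\in\mathscr{L}_{\widehat{\mathbb{F}}}$, and then the uniqueness of $(\mathscr{L}_{\widehat{\mathbb{F}}},\mathscr{N}_{\widehat{\mathbb{F}}})$-triangles forces $L(A\otimes B)\cong A\otimes B$ with invertible Dirac morphism, whence $\eta^{\widehat{\mathbb{F}}}_{A\otimes B}=F(u)$ is an isomorphism. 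Your observation that only the $\mathscr{L}_*$-preservation (and not the $\mathscr{N}_*$-preservation, which needs the strong BC hypothesis) is used is accurate for this route. What the paper's longer route buys, and what your shorter one does not immediately give, is the finer identification $\eta^{\widehat{\mathbb{G}}}_A\otimes\eta^{\widehat{\mathbb{H}}}_B=\eta^{\widehat{\mathbb{F}}}_{A\otimes B}$ recorded in the remark following the theorem, which is exploited again in the proof of Theorem \ref{theo.BCDirectProducts}; for the statement as literally formulated, however, your argument is complete and arguably preferable.
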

%actually, $\widehat{\mathbb{F}}$ satisfies the strong quantum BC property?? NO! because Z is not a triangulated functor. Not every F-C*-algebra is a tensor product of G-C*-algebras and H-C*-algebras.
	\begin{rem}
		In the course of the proof of \cite[Theorem 5.2.3-(i)]{RubenSemiDirect} it is shown that $\eta^{\widehat{\mathbb{G}}}_A\otimes \eta^{\widehat{\mathbb{H}}}_B=\eta^{\widehat{\mathbb{F}}}_{A\otimes B}$ as soon as both $\widehat{\mathbb{G}}$ and $\widehat{\mathbb{H}}$ satisfy the strong quantum BC property.
	\end{rem}

	Note that the analogous assertion for the usual quantum BC property needs further hypothesis, which are related to the Künneth formula in order to compute the K-theory of a tensor product. This relation will be studied in Section \ref{sec.BCKunneth}.

	\subsection{The Künneth class $\mathcal{N}_{\widehat{\mathbb{G}}}$}\label{sec.QKunnethClass}
	The previous preliminary observations allow us to formulate an equivariant Künneth class in the spirit of Examples \ref{exs.KunnethClasses} and \cite{ChabertEchterhoffOyono} in the context of quantum groups. More precisely, let $\mathbb{G}$ be a compact quantum group, $(A, \alpha)$ a $\widehat{\mathbb{G}}$-C$^*$-algebra and $B$ a C$^*$algebra. The exterior Kasparov product $\widetilde{\tau}_\mathbb{C}$ together with the invertible Kasparov triple $\psi_B$ obtained in Proposition \ref{pro.InvertibleElementDirectProduct} give:
	$$KK(\mathbb{C},  L(A)\underset{r}{\rtimes} \widehat{\mathbb{G}})\times KK(\mathbb{C}, B)\overset{\widetilde{\tau}_{\mathbb{C}}}{\longrightarrow} KK(\mathbb{C}, L(A)\underset{r}{\rtimes} \widehat{\mathbb{G}}\otimes B)\cong KK(\mathbb{C}, L(A\otimes B)\underset{r}{\rtimes} \widehat{\mathbb{G}}),$$
	which yields a group homomorphism $\alpha_{\widehat{\mathbb{G}}}: K_*(L(A)\underset{r}{\rtimes} \widehat{\mathbb{G}})\otimes K_*(B)\rightarrow K_*(L(A\otimes B)\underset{r}{\rtimes} \widehat{\mathbb{G}})$, which is natural on $A$ and $B$.
	
	\begin{defi}
		Let $\mathbb{G}$ be a compact quantum group. We denote by $\mathcal{N}_{\widehat{\mathbb{G}}}$ the class of all $\widehat{\mathbb{G}}$-C$^*$-algebras $(A,\alpha)$ in $\widehat{\mathbb{G}}\text{-C}^*\text{-Alg}$ such that $\alpha_{\widehat{\mathbb{G}}}$ is an isomorphism for all separable C$^*$-algebra $B$ with $K_*(B)$ free abelian. By abuse of notation, we still denote by $\mathcal{N}_{\widehat{\mathbb{G}}}$ the full subcategory of $\widehat{\mathbb{G}}\text{-C}^*\text{-Alg}$ generated by the class $\mathcal{N}_{\widehat{\mathbb{G}}}$.
	\end{defi}
	
	Note that the homomorphism $\alpha_{\widehat{\mathbb{G}}}$ is obtained, up to the identification given by Proposition \ref{pro.InvertibleElementDirectProduct}, as it was obtained the group homomorphism $\alpha$ from Examples \ref{exs.KunnethClasses}. Here we use $L(A)\underset{r}{\rtimes} \widehat{\mathbb{G}}$ instead of an arbitrary C$^*$-algebra $A$. In this sense, we have that $(A,\alpha)\in \mathcal{N}_{\widehat{\mathbb{G}}}$ $\Leftrightarrow$ $L(A)\underset{r}{\rtimes} \widehat{\mathbb{G}}\in\mathcal{N}$, by definition. Therefore, $\mathcal{N}_{\widehat{\mathbb{G}}}$ is a Künneth category in the sense of Definition \ref{defi.KunnethCategory} and, given $(A,\alpha)\in \mathcal{N}_{\widehat{\mathbb{G}}}$, the map $B\mapsto K_*(L(A\otimes B)\underset{r}{\rtimes} \widehat{\mathbb{G}})$ defines a functor on the whole category $\text{C}^*\text{-Alg}$ satisfying properties $(i)-(iv)$ from Definition \ref{defi.KunnethFunctor}. In other words, this map defines a Künneth functor. Consequently, Theorem \ref{theo.AbstractKunnethTheorem} yields that the Künneth sequence:
	$$0\longrightarrow K_*(L(A)\underset{r}{\rtimes} \widehat{\mathbb{G}})\otimes K_*(B)\overset{\alpha_{\widehat{\mathbb{G}}}}{\longrightarrow} K_*(L(A\otimes B)\underset{r}{\rtimes} \widehat{\mathbb{G}})\overset{\beta_{\widehat{\mathbb{G}}}}{\longrightarrow} \text{Tor}(K_*(L(A)\underset{r}{\rtimes} \widehat{\mathbb{G}}), K_*(B))\longrightarrow 0$$
	is exact for all $(A,\alpha)\in\mathcal{N}_{\widehat{\mathbb{G}}}$ and all $B\in \text{Obj}(\text{C}^*\text{-Alg})$.
	
	\begin{ex}
		If $\widehat{\mathbb{G}}$ is a classical locally compact group $G$, then $\mathcal{N}_{\widehat{\mathbb{G}}}=\mathcal{N}_G$ as defined in Examples \ref{exs.KunnethClasses}. Indeed, from the Meyer-Nest reformulation of the BC property (see \cite[Theorem 5.2]{MeyerNest}) one knows that $K^{\text{top}}_*(G, A)\cong K_*(L(A)\underset{r}{\rtimes} G)$ naturally, for all $G$-C$^*$-algebra $(A,\alpha)$. Moreover, this approach to $\mathcal{N}_G$ yields a characterisation of the objects in the equivariant Künneth class in terms of the non-equivariant one, up to replacing $A$ by a $\mathscr{L}$-simplicial approximation of $A$. Namely, $(A,\alpha)\in \mathcal{N}_G$ $\Leftrightarrow$ $L(A)\underset{r}{\rtimes} G\in\mathcal{N}$.
	\end{ex}

\section{\textsc{Relating the quantum Baum-Connes property with the Künneth formula}}\label{sec.BCKunneth}
	In this section we generalise to the case of discrete quantum groups some key results appearing in \cite{ChabertEchterhoffOyono}. In particular, we are going to improve the stability of the quantum BC property for quantum direct products appearing already in \cite{RubenSemiDirect}.
	\begin{proSec}\label{pro.BCKunneth}
		Let $\mathbb{G}$ be a compact quantum group. The diagram:
		\begin{equation}\label{eq.CommutativeDiagramBCKunneth}
		\begin{gathered}
			\xymatrix@C=20mm@!R=15mm{
				\mbox{$ K_*(L(A)\underset{r}{\rtimes}\widehat{\mathbb{G}})\otimes K_*(B)$}\ar[d]_{\mbox{$\alpha_{\widehat{\mathbb{G}}}$}}\ar[r]^-{\mbox{$\eta^{\widehat{\mathbb{G}}}_{A}\otimes id$}}&\mbox{$ K_*(A\underset{r}{\rtimes}\widehat{\mathbb{G}})\otimes K_*(B)$}\ar[d]^{\mbox{$\alpha$}}\\
				\mbox{$ K_*(L(A\otimes B)\underset{r}{\rtimes}\widehat{\mathbb{G}})$}\ar[r]^{\mbox{$\eta^{\widehat{\mathbb{G}}}_{A\otimes B}$}}&\mbox{$K_*((A\otimes B)\underset{r}{\rtimes}\widehat{\mathbb{G}})$}}
		\end{gathered}
		\end{equation}
		commutes for all $\widehat{\mathbb{G}}$-C$^*$-algebra $A$ and all C$^*$-algebra $B$. In particular, if $\widehat{\mathbb{G}}$ satisfies the BC property with coefficients in $A\otimes B$, for all C$^*$-algebra $B$ equipped with the trivial action of $\widehat{\mathbb{G}}$; then $A\in\mathcal{N}_{\widehat{\mathbb{G}}}$ $\Leftrightarrow$ $A\underset{r}{\rtimes} \widehat{\mathbb{G}}\in\mathcal{N}$.
	\end{proSec}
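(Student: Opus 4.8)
The plan is to reduce all three maps in the square to Kasparov products and then play off the compatibility identity (\ref{eq.TensorProdKasparovProd}) against the commuting square of Proposition \ref{pro.InvertibleElementDirectProduct}. Recall from (\ref{eq.PictureQAssemblyMap}) that $\eta^{\widehat{\mathbb{G}}}_A$ is right Kasparov multiplication by the crossed-product Dirac element $u'\underset{r}{\rtimes}\widehat{\mathbb{G}}$, and likewise $\eta^{\widehat{\mathbb{G}}}_{A\otimes B}$ is multiplication by $u\underset{r}{\rtimes}\widehat{\mathbb{G}}$, where $u'$, $u$ are the Dirac homomorphisms for $A$, $A\otimes B$. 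The map $\alpha$ is $\widetilde{\tau}_{\mathbb{C}}$ followed by the isomorphism $A\underset{r}{\rtimes}\widehat{\mathbb{G}}\otimes B\cong(A\otimes B)\underset{r}{\rtimes}\widehat{\mathbb{G}}$ of Proposition \ref{pro.DirectProductTensorProduct}, while $\alpha_{\widehat{\mathbb{G}}}$ is $\widetilde{\tau}_{\mathbb{C}}$ followed by the invertible triple $\Psi_B=\psi_B\underset{r}{\rtimes}\widehat{\mathbb{G}}$ of Proposition \ref{pro.InvertibleElementDirectProduct}.

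First I would take $x\in KK(\mathbb{C}, L(A)\underset{r}{\rtimes}\widehat{\mathbb{G}})$ and $y\in KK(\mathbb{C},B)$ and run the down-then-right route. Applying $\alpha_{\widehat{\mathbb{G}}}$ gives $(x\underset{\mathbb{C}}{\otimes^{\text{ext}}}y)$ composed with $\Psi_B$, and then $\eta^{\widehat{\mathbb{G}}}_{A\otimes B}$ post-composes with $u\underset{r}{\rtimes}\widehat{\mathbb{G}}$; associativity of the Kasparov product lets me regroup this as $(x\underset{\mathbb{C}}{\otimes^{\text{ext}}}y)$ composed with the single morphism $\Psi_B\,\underset{L(A\otimes B)\underset{r}{\rtimes}\widehat{\mathbb{G}}}{\otimes}\,(u\underset{r}{\rtimes}\widehat{\mathbb{G}})$. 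Here the commuting square (\ref{eq.CommutativeDiagramDirectProductbisGNoH}) is the crucial input: it rewrites that inner morphism as $\mathcal{Z}_B(u')\underset{r}{\rtimes}\widehat{\mathbb{G}}$ followed by the Proposition \ref{pro.DirectProductTensorProduct} isomorphism. Because $\mathcal{Z}_B(u')=u'\underset{\mathbb{C}}{\otimes^{\text{ext}}}\mathrm{id}_B$, the crossed product of this factor is $(u'\underset{r}{\rtimes}\widehat{\mathbb{G}})\underset{\mathbb{C}}{\otimes^{\text{ext}}}\mathrm{id}_B$, and one application of (\ref{eq.TensorProdKasparovProd}) collapses the whole expression to $\big(\eta^{\widehat{\mathbb{G}}}_A(x)\underset{\mathbb{C}}{\otimes^{\text{ext}}}y\big)$ composed with that same isomorphism. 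Running the same element right-then-down produces $\alpha\big(\eta^{\widehat{\mathbb{G}}}_A(x)\otimes y\big)$, which is by definition $\big(\eta^{\widehat{\mathbb{G}}}_A(x)\underset{\mathbb{C}}{\otimes^{\text{ext}}}y\big)$ composed with the very same identification; the two agree, so the square commutes.

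For the ``in particular'' part, the BC hypothesis says $\eta^{\widehat{\mathbb{G}}}_{A\otimes B}$ is an isomorphism for every trivially-acted $B$; taking $B=\mathbb{C}$ shows $\eta^{\widehat{\mathbb{G}}}_A$ is an isomorphism, and since $(-)\otimes K_*(B)$ preserves isomorphisms, the top horizontal arrow $\eta^{\widehat{\mathbb{G}}}_A\otimes\mathrm{id}$ is invertible as well. The commuting square then gives $\alpha=\eta^{\widehat{\mathbb{G}}}_{A\otimes B}\circ\alpha_{\widehat{\mathbb{G}}}\circ(\eta^{\widehat{\mathbb{G}}}_A\otimes\mathrm{id})^{-1}$, so for each $B$ with $K_*(B)$ free abelian the map $\alpha$ is an isomorphism exactly when $\alpha_{\widehat{\mathbb{G}}}$ is. By the defining property of $\mathcal{N}$ (applied to $A\underset{r}{\rtimes}\widehat{\mathbb{G}}$) and of $\mathcal{N}_{\widehat{\mathbb{G}}}$, this is precisely the equivalence $A\underset{r}{\rtimes}\widehat{\mathbb{G}}\in\mathcal{N}\Leftrightarrow A\in\mathcal{N}_{\widehat{\mathbb{G}}}$.

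The delicate step, and the one I expect to be the main obstacle, is the bookkeeping around the identification $\mathcal{Z}_B(u')\underset{r}{\rtimes}\widehat{\mathbb{G}}=(u'\underset{r}{\rtimes}\widehat{\mathbb{G}})\underset{\mathbb{C}}{\otimes^{\text{ext}}}\mathrm{id}_B$, together with the check that the Proposition \ref{pro.DirectProductTensorProduct} isomorphism arising from the right-then-down route is literally the same natural identification that appears after applying (\ref{eq.CommutativeDiagramDirectProductbisGNoH}) on the other route. Once these identifications are pinned down, associativity of the Kasparov product and the functoriality encoded in (\ref{eq.TensorProdKasparovProd}) force the remainder of the argument.
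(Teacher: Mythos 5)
Your proposal is correct and follows essentially the same route as the paper: both reduce $\eta^{\widehat{\mathbb{G}}}_A$, $\eta^{\widehat{\mathbb{G}}}_{A\otimes B}$, $\alpha$ and $\alpha_{\widehat{\mathbb{G}}}$ to Kasparov products via Equation (\ref{eq.PictureQAssemblyMap}), invoke the commuting square of Proposition \ref{pro.InvertibleElementDirectProduct} to trade $\Psi_B$ followed by $u\underset{r}{\rtimes}\widehat{\mathbb{G}}$ for $\mathcal{Z}_B(u')\underset{r}{\rtimes}\widehat{\mathbb{G}}=(u'\underset{r}{\rtimes}\widehat{\mathbb{G}})\underset{\mathbb{C}}{\otimes^{\text{ext}}}\mathrm{id}_B$ followed by the identification of Proposition \ref{pro.DirectProductTensorProduct}, and then collapse via the compatibility identity (\ref{eq.TensorProdKasparovProd}); the ``in particular'' part is handled identically by two-out-of-three in the commuting square. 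The only difference is cosmetic: you make explicit the bookkeeping that the paper absorbs into a single ``$\cong$'' in its displayed computation.
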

	\begin{proof}
		Recall that the quantum assembly map $\eta^{\widehat{\mathbb{G}}}$ can be pictured as a certain Kasparov product (cf. Equation (\ref{eq.PictureQAssemblyMap})). Let $y\in K_*(L(A)\underset{r}{\rtimes}\widehat{\mathbb{G}})=KK(\mathbb{C}, L(A)\underset{r}{\rtimes}\widehat{\mathbb{G}})$ and $x\in K_*(B)=KK(\mathbb{C}, B)$. On the one hand, we have:
		\begin{equation*}
		\begin{split}
			y\otimes x\overset{\alpha_{\widehat{\mathbb{G}}}}{\mapsto} y\ \underset{\mathbb{C}}{\otimes^{\text{ext}}} x \overset{\eta^{\widehat{\mathbb{G}}}_{A\otimes B}}{\mapsto} (y\ \underset{\mathbb{C}}{\otimes^{\text{ext}}} x)\underset{L(A\otimes B)\underset{r}{\rtimes}\widehat{\mathbb{G}}}{\otimes} u\underset{r}{\rtimes} \widehat{\mathbb{G}}&\cong (y\ \underset{\mathbb{C}}{\otimes^{\text{ext}}} x)\underset{L(A)\underset{r}{\rtimes}\widehat{\mathbb{G}}\otimes B}{\otimes} \big(u'\underset{r}{\rtimes} \widehat{\mathbb{G}}\otimes id\big)\\
			&=\big(y \underset{L(A)\underset{r}{\rtimes}\widehat{\mathbb{G}}}{\otimes} u'\underset{r}{\rtimes} \widehat{\mathbb{G}}\big)\ \underset{\mathbb{C}}{\otimes^{\text{ext}}} \big(x\underset{B}{\otimes} id),
		\end{split}
		\end{equation*}
		where the last equality follows from Equation (\ref{eq.TensorProdKasparovProd}). On the other hand, we have:
		\begin{equation*}
		\begin{split}
			y\otimes x&\overset{\eta^{\widehat{\mathbb{G}}}_A\otimes id}{\mapsto} \big(y \underset{L(A)\underset{r}{\rtimes}\widehat{\mathbb{G}}}{\otimes} u'\underset{r}{\rtimes} \widehat{\mathbb{G}}\big)\otimes x\overset{\alpha}{\mapsto} \big(y \underset{L(A)\underset{r}{\rtimes}\widehat{\mathbb{G}}}{\otimes} u'\underset{r}{\rtimes} \widehat{\mathbb{G}}\big)\ \underset{\mathbb{C}}{\otimes^{\text{ext}}} x.
		\end{split}
		\end{equation*}
		
		These computations show that, indeed, Diagram (\ref{eq.CommutativeDiagramBCKunneth}) commutes. To conclude, notice that if $\widehat{\mathbb{G}}$ satisfies the BC property with coefficients in $A\otimes B$, for all C$^*$-algebra $B$ equipped with the trivial action of $\widehat{\mathbb{G}}$; then both $\eta_A$ and $\eta_{A\otimes B}$ are isomorphisms, for all C$^*$-algebra $B$. Therefore, commutativity of Diagram (\ref{eq.CommutativeDiagramBCKunneth}) yields that $\alpha_{\widehat{\mathbb{G}}}$ is an isomorphism if and only if $\alpha$ is an isomorphism.
	\end{proof}
	
	\begin{proSec}\label{pro.KunnethBC}
		Let $\mathbb{G}$ be a compact quantum group. Assume that $A$ is a $\widehat{\mathbb{G}}$-C$^*$-algebra such that $\widehat{\mathbb{G}}$ satisfies the quantum BC property with coefficients in $A$. 
		\begin{enumerate}[i)]
			\item If $A\in\mathcal{N}_{\widehat{\mathbb{G}}}$ and $A\underset{r}{\rtimes} \widehat{\mathbb{G}}\in\mathcal{N}$, then $\widehat{\mathbb{G}}$ satisfies the quantum BC property for $A\otimes B$, for all C$^*$-algebra $B$.
			\item If $A\in\mathcal{N}_{\widehat{\mathbb{G}}}$, then $\widehat{\mathbb{G}}$ satisfies the quantum BC property for $A\otimes B$, for all C$^*$-algebra $B\in\mathcal{N}$.
		\end{enumerate}
	\end{proSec}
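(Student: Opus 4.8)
The plan is to treat both parts simultaneously by comparing two Künneth short exact sequences by means of the five lemma, the comparison being induced by the assembly map $\eta^{\widehat{\mathbb{G}}}_{A\otimes B}$. I first record the two identifications that turn the equivariant picture into an ordinary one: by Proposition \ref{pro.DirectProductTensorProduct} (the case $\mathbb{H}=\mathbb{E}$, with $B$ carrying the trivial action) one has $(A\otimes B)\underset{r}{\rtimes}\widehat{\mathbb{G}}\cong (A\underset{r}{\rtimes}\widehat{\mathbb{G}})\otimes B$, and by the invertible triple $\psi_B$ of Proposition \ref{pro.InvertibleElementDirectProduct} one has $K_*(L(A\otimes B)\underset{r}{\rtimes}\widehat{\mathbb{G}})\cong K_*((L(A)\underset{r}{\rtimes}\widehat{\mathbb{G}})\otimes B)$. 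Under these identifications the equivariant map $\alpha_{\widehat{\mathbb{G}}}$ is precisely the ordinary Künneth map $\alpha$ for the pair $(L(A)\underset{r}{\rtimes}\widehat{\mathbb{G}},\,B)$, and similarly, on the target of the assembly map, for the pair $(A\underset{r}{\rtimes}\widehat{\mathbb{G}},\,B)$.

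Next I would exhibit the two rows of the ladder as exact Künneth sequences. The top row is the sequence attached to the hypothesis $A\in\mathcal{N}_{\widehat{\mathbb{G}}}$ (present in both parts),
$$0\longrightarrow K_*(L(A)\underset{r}{\rtimes}\widehat{\mathbb{G}})\otimes K_*(B)\overset{\alpha_{\widehat{\mathbb{G}}}}{\longrightarrow} K_*(L(A\otimes B)\underset{r}{\rtimes}\widehat{\mathbb{G}})\overset{\beta_{\widehat{\mathbb{G}}}}{\longrightarrow}\text{Tor}(K_*(L(A)\underset{r}{\rtimes}\widehat{\mathbb{G}}),K_*(B))\longrightarrow 0,$$
which is exact by the discussion following the definition of $\mathcal{N}_{\widehat{\mathbb{G}}}$. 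The bottom row is the ordinary Künneth sequence for $(A\underset{r}{\rtimes}\widehat{\mathbb{G}})\otimes B$: in case $(i)$ its exactness comes from $A\underset{r}{\rtimes}\widehat{\mathbb{G}}\in\mathcal{N}$, while in case $(ii)$ it comes from $B\in\mathcal{N}$, since by symmetry of the tensor product and the definition of $\mathcal{N}$ the assumption $B\in\mathcal{N}$ forces the Künneth sequence for $C\otimes B$ to be exact for every separable $C$, in particular $C=A\underset{r}{\rtimes}\widehat{\mathbb{G}}$.

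Then I would assemble the commutative ladder. Its left square is exactly the square of Proposition \ref{pro.BCKunneth}, the left vertical arrow being $\eta^{\widehat{\mathbb{G}}}_A\otimes id$ and the middle vertical arrow being $\eta^{\widehat{\mathbb{G}}}_{A\otimes B}$ (here the identifications above and Proposition \ref{pro.DirectProductTensorProduct} are used to read the Dirac-induced map $u\underset{r}{\rtimes}\widehat{\mathbb{G}}$ as $\eta^{\widehat{\mathbb{G}}}_{A\otimes B}$). The right vertical arrow is $\text{Tor}(\eta^{\widehat{\mathbb{G}}}_A, id)$, and commutativity of the right square follows from the naturality of the boundary map $\beta$ of the abstract Künneth sequence (Theorem \ref{theo.AbstractKunnethTheorem}) with respect to the natural transformation of Künneth functors $B\mapsto \eta^{\widehat{\mathbb{G}}}_{A\otimes B}$, whose value at $\mathbb{C}$ is $\eta^{\widehat{\mathbb{G}}}_A$. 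Since $\widehat{\mathbb{G}}$ satisfies the BC property with coefficients in $A$, the map $\eta^{\widehat{\mathbb{G}}}_A\colon K_*(L(A)\underset{r}{\rtimes}\widehat{\mathbb{G}})\to K_*(A\underset{r}{\rtimes}\widehat{\mathbb{G}})$ is an isomorphism; hence $\eta^{\widehat{\mathbb{G}}}_A\otimes id$ and $\text{Tor}(\eta^{\widehat{\mathbb{G}}}_A, id)$ are isomorphisms as well. The five lemma then yields that the middle arrow $\eta^{\widehat{\mathbb{G}}}_{A\otimes B}$ is an isomorphism, i.e. $\widehat{\mathbb{G}}$ satisfies the quantum BC property with coefficients in $A\otimes B$, which proves $(i)$ and $(ii)$.

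The step I expect to be the main obstacle is the commutativity of the right-hand square, namely the compatibility of the two boundary maps $\beta_{\widehat{\mathbb{G}}}$ and $\beta$ with the assembly map. In contrast with the left square, this is not supplied by Proposition \ref{pro.BCKunneth} and requires that $\beta$ be natural not only in the coefficient algebra $B$ but with respect to a morphism between two genuinely different Künneth functors. Concretely one realizes a geometric free resolution of $K_*(B)$ by a semi-split extension of C$^*$-algebras and checks that $\eta^{\widehat{\mathbb{G}}}$ intertwines the associated six-term exact sequences; this is the point where the naturality of the exterior product $\widetilde{\tau}_{\mathbb{C}}$ and of the whole localisation construction in $\mathscr{KK}^{\widehat{\mathbb{G}}}$ is needed.
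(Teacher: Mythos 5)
Your proposal is correct and follows essentially the same route as the paper: the paper's proof displays exactly the same ladder of two Künneth sequences (top row exact since $A\in\mathcal{N}_{\widehat{\mathbb{G}}}$ means $L(A)\underset{r}{\rtimes}\widehat{\mathbb{G}}\in\mathcal{N}$, bottom row exact from $A\underset{r}{\rtimes}\widehat{\mathbb{G}}\in\mathcal{N}$ in case $(i)$ or $B\in\mathcal{N}$ in case $(ii)$) and then invokes the five lemma as in \cite[Proposition 4.10]{ChabertEchterhoffOyono}. Your extra care about the commutativity of the right-hand square is the only point the paper does not spell out, deferring it to the cited reference, and your treatment of it via naturality of the abstract Künneth boundary map is consistent with that reference.
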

	\begin{proof}
	First, recall that given a $\widehat{\mathbb{G}}$-C$^*$-algebra $A$, we have $A\in \mathcal{N}_{\widehat{\mathbb{G}}}$ $\Leftrightarrow$ $L(A)\underset{r}{\rtimes} \widehat{\mathbb{G}}\in\mathcal{N}$ by definition. Next, consider the following diagram:
	$$
	\xymatrix@C=15mm@!R=18mm{
		\mbox{$0\longrightarrow K_*(L(A)\underset{r}{\rtimes}\widehat{\mathbb{G}})\otimes K_*(B)$}\ar[r]^-{\mbox{$\alpha_{\widehat{\mathbb{G}}}$}}\ar[d]_{\mbox{$\eta^{\widehat{\mathbb{G}}}_{A}\otimes id$}}
		&\mbox{$K_*(L(A\otimes B)\underset{r}{\rtimes}\widehat{\mathbb{G}})$}\ar[r]^-{\mbox{$\beta_{\widehat{\mathbb{G}}}$}}\ar[d]^{\mbox{$\eta^{\widehat{\mathbb{G}}}_{A\otimes B}$}}
		&\mbox{$\text{Tor}(K_*(L(A)\underset{r}{\rtimes} \widehat{\mathbb{G}}), K_*(B))\longrightarrow 0$}\ar[d]^{\mbox{$\text{Tor}(\eta^{\widehat{\mathbb{G}}}_{A}\otimes id)$}}\\
		\mbox{$0\longrightarrow K_*(A\underset{r}{\rtimes}\widehat{\mathbb{G}})\otimes K_*(B)$}\ar[r]_-{\mbox{$\alpha$}}
		&\mbox{$K_*((A\otimes B)\underset{r}{\rtimes}\widehat{\mathbb{G}})$}\ar[r]_-{\mbox{$\beta$}}
		&\mbox{$\text{Tor}(K_*(A\underset{r}{\rtimes}\widehat{\mathbb{G}}), K_*(B)) \longrightarrow 0$}}
	$$
	
	The argument follows then the same lines as the one in \cite[Proposition 4.10]{ChabertEchterhoffOyono}.
	\end{proof}
	
	\begin{corSec}\label{cor.KunnethBC2}
		Let $\mathbb{G}$ be a compact quantum group. Assume that $\widehat{\mathbb{G}}$ satisfies the quantum BC property with coefficients in $\mathbb{C}$.%$\mathbb{C}\in\mathscr{L}_{\widehat{\mathbb{G}}}$.
		\begin{enumerate}[i)]
			\item If $C(\mathbb{G})\in\mathcal{N}$ and $\mathbb{C}\in\mathcal{N}_{\widehat{\mathbb{G}}}$, then $\widehat{\mathbb{G}}$ satisfies the quantum BC property for all C$^*$-algebra $B$ equipped with the trivial action of $\widehat{\mathbb{G}}$.
			\item If $\mathbb{C}\in\mathcal{N}_{\widehat{\mathbb{G}}}$, then $\widehat{\mathbb{G}}$ satisfies the quantum BC property for all C$^*$-algebra $B\in\mathcal{N}$.
		\end{enumerate}
	\end{corSec}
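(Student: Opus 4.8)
The plan is to deduce the corollary directly from Proposition \ref{pro.KunnethBC} by specialising to $A := \mathbb{C}$ equipped with the trivial action of $\widehat{\mathbb{G}}$. With this choice the standing hypothesis of Proposition \ref{pro.KunnethBC}, namely that $\widehat{\mathbb{G}}$ satisfies the quantum BC property with coefficients in $A$, becomes precisely the hypothesis of the present corollary that $\widehat{\mathbb{G}}$ satisfies the quantum BC property with coefficients in $\mathbb{C}$. So no extra assumption is needed to enter the proposition; the entire content of the corollary is a translation of the two items of the proposition under two standard identifications.

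The first identification I would record is that of the trivial-coefficient crossed product: one has a canonical $*$-isomorphism $\mathbb{C}\underset{r}{\rtimes}\widehat{\mathbb{G}}\cong C(\mathbb{G})$ (in the reduced form adopted throughout), which is the quantum analogue of the classical identity $\mathbb{C}\underset{r}{\rtimes}G\cong C^*_r(G)$ recalled in the introduction. Since $\mathcal{N}$ is stable under $KK$-equivalence by Lemma \ref{lem.StabilityClassN}, the condition $\mathbb{C}\underset{r}{\rtimes}\widehat{\mathbb{G}}\in\mathcal{N}$ appearing in item $(i)$ of Proposition \ref{pro.KunnethBC} is thereby equivalent to the condition $C(\mathbb{G})\in\mathcal{N}$ stated in item $(i)$ of the corollary. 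The second identification is that $\mathbb{C}\otimes B\cong B$ as $\widehat{\mathbb{G}}$-C$^*$-algebras for every C$^*$-algebra $B$, the right-hand side carrying the trivial action; hence the quantum BC property for the coefficient algebra $A\otimes B=\mathbb{C}\otimes B$ is exactly the quantum BC property for $B$ equipped with the trivial action of $\widehat{\mathbb{G}}$.

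With these two identifications in place, item $(i)$ (resp.\ item $(ii)$) of the corollary is read off verbatim from item $(i)$ (resp.\ item $(ii)$) of Proposition \ref{pro.KunnethBC}: in item $(i)$ the pair of hypotheses $\{\mathbb{C}\in\mathcal{N}_{\widehat{\mathbb{G}}},\ \mathbb{C}\underset{r}{\rtimes}\widehat{\mathbb{G}}\in\mathcal{N}\}$ becomes $\{\mathbb{C}\in\mathcal{N}_{\widehat{\mathbb{G}}},\ C(\mathbb{G})\in\mathcal{N}\}$ and the conclusion ``BC for $\mathbb{C}\otimes B$, all $B$'' becomes ``BC for all $B$ with trivial action''; in item $(ii)$ the single hypothesis is unchanged and the conclusion ``BC for $\mathbb{C}\otimes B$, all $B\in\mathcal{N}$'' becomes ``BC for all $B\in\mathcal{N}$''. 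There is essentially no obstacle here beyond the bookkeeping of the two identifications, and the only point deserving genuine care is to confirm that the trivial-coefficient crossed product $\mathbb{C}\underset{r}{\rtimes}\widehat{\mathbb{G}}$ really is $C(\mathbb{G})$, so that the hypothesis on the crossed product in the proposition faithfully translates into a hypothesis on $C(\mathbb{G})$.
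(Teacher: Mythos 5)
Your argument is correct and is precisely the paper's own proof: the paper likewise specialises Proposition \ref{pro.KunnethBC} to $A=\mathbb{C}$ and invokes the identification $C(\mathbb{G})=\mathbb{C}\underset{r}{\rtimes}\widehat{\mathbb{G}}$. You have merely spelled out the two identifications ($\mathbb{C}\underset{r}{\rtimes}\widehat{\mathbb{G}}\cong C(\mathbb{G})$ and $\mathbb{C}\otimes B\cong B$) in more detail than the paper does.
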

	\begin{proof}
		Recall that $C(\mathbb{G})=\mathbb{C}\underset{r}{\rtimes} \widehat{\mathbb{G}}$. So, $(i)$ and $(ii)$ of statement follow from $(i)$ and $(ii)$ of Proposition \ref{pro.KunnethBC}, respectively.	
	\end{proof}
	
		\begin{remSec}\label{rem.FurtherHypoth}
	In the classical setting when $\widehat{\mathbb{G}}$ is a locally compact group $G$, the assumption $\mathbb{C}\in\mathcal{N}_G$ is automatically satisfied. Indeed, $\mathbb{C}$ is a type I C$^*$-algebra and $\mathcal{N}_G$ contains all type I $G$-C$^*$-algebras by virtue of \cite[Theorem 0.1]{ChabertEchterhoffOyono}. In the quantum setting, a similar related result to \cite[Theorem 0.1]{ChabertEchterhoffOyono} is Theorem \ref{theo.QuantumGroupCalgebraKunneth} in the next section (cf. Remark \ref{rem.AnalogoyThm01} for an explanation to this analogy). However, to the best knowledge of the author, it is not known for instance whether $\mathbb{C}\in\mathcal{N}_{\widehat{\mathbb{G}}}$ for every discrete quantum group $\widehat{\mathbb{G}}$. One reason for this is that in our approach the objects in $\mathcal{N}_{\widehat{\mathbb{G}}}$ are characterised in terms of objects in $\mathcal{N}$ up to a \emph{$\mathscr{L}_{\widehat{\mathbb{G}}}$-simplicial approximation}, which entails to study the localisation functor $L$ in relation with crossed products and the \emph{equivariant} Künneth class. One possibility to do so might be to adapt the \emph{Going-Down technique} from \cite{ChabertEchterhoffOyono} based on Theorem \ref{theo.QuantumGroupCalgebraKunneth}. But this is out of the scope of the present article.
	\end{remSec}
	
	The following theorem is an improved version of \cite[Corollary 5.2.5]{RubenSemiDirect}.
	\begin{theoSec}\label{theo.BCDirectProducts}
		Let $\mathbb{G}$, $\mathbb{H}$ be two compact quantum groups and let $\mathbb{F}:=\mathbb{G}\times \mathbb{H}$ be the corresponding quantum direct product of $\mathbb{G}$ and $\mathbb{H}$.
		\begin{enumerate}[i)]
			\item Let $A$ be a $\widehat{\mathbb{G}}$-C$^*$-algebra and $B$ a $\widehat{\mathbb{H}}$-C$^*$-algebra. Assume that $\widehat{\mathbb{G}}$ satisfies the strong quantum BC property and that $B\in\mathscr{L}_{\widehat{\mathbb{H}}}$. If $A\underset{r}{\rtimes}\widehat{\mathbb{G}}\in\mathcal{N}$ and $A\in\mathcal{N}_{\widehat{\mathbb{G}}}$, then $\widehat{\mathbb{F}}$ satisfies the quantum BC property with coefficients in $A\otimes B$.
			
			In particular, if $\widehat{\mathbb{G}}$ satisfies the strong quantum BC property, $C(\mathbb{G})\in\mathcal{N}$ and $\mathbb{C}\in\mathscr{L}_{\widehat{\mathbb{H}}}$; then $\widehat{\mathbb{F}}$ satisfies the quantum BC property with coefficients in $\mathbb{C}$.
			
			\item Assume that $\widehat{\mathbb{G}}$ and $\widehat{\mathbb{H}}$ satisfy the strong quantum BC property. If $C(\mathbb{G}), C(\mathbb{H})\in\mathcal{N}$ and $\mathbb{C}\in\mathcal{N}_{\widehat{\mathbb{F}}}$, then $\widehat{\mathbb{F}}$ satisfies the quantum BC property for all C$^*$-algebra $B$ equipped with the trivial action of $\widehat{\mathbb{F}}$.
		\end{enumerate}
	\end{theoSec}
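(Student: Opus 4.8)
The plan is to treat the two items separately, reducing each to machinery already assembled above.

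For item $(i)$, the idea is to feed the commutative square of Lemma \ref{lem.InvertibleElementDirectProduct}$(iii)$ through the functor $K_*$ and read off the quantum assembly map for $\widehat{\mathbb{F}}$ as one of its edges. Since $\widehat{\mathbb{G}}$ satisfies the strong quantum BC property and $B\in\mathscr{L}_{\widehat{\mathbb{H}}}$, Lemma \ref{lem.InvertibleElementDirectProduct}$(iii)$ (with $B_0:=B$) supplies an invertible triple $\psi_B$ fitting into a square whose horizontal arrows $\Psi_B:=\psi_B\underset{r}{\rtimes}\widehat{\mathbb{F}}$ and the canonical isomorphism of Proposition \ref{pro.DirectProductTensorProduct} are $KK$-equivalences, whose right-hand vertical arrow is $u\underset{r}{\rtimes}\widehat{\mathbb{F}}$, and whose left-hand vertical arrow is $\mathcal{Z}_B(u')\underset{r}{\rtimes}\widehat{\mathbb{F}}$. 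Applying $K_*$ and recalling from Equation (\ref{eq.PictureQAssemblyMap}) that $\eta^{\widehat{\mathbb{F}}}_{A\otimes B}=K_*(u\underset{r}{\rtimes}\widehat{\mathbb{F}})$, the problem of showing that $\widehat{\mathbb{F}}$ satisfies the quantum BC property with coefficients in $A\otimes B$ is reduced, by commutativity and the invertibility of the horizontal edges, to showing that $K_*\big(\mathcal{Z}_B(u')\underset{r}{\rtimes}\widehat{\mathbb{F}}\big)$ is an isomorphism.

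First I would identify this left-hand map. Using Proposition \ref{pro.DirectProductTensorProduct} and the definition of $\mathcal{Z}_B$ one rewrites it, under the stated identifications, as $(u'\underset{r}{\rtimes}\widehat{\mathbb{G}})\underset{\mathbb{C}}{\otimes^{\text{ext}}}\mathrm{id}_{B\underset{r}{\rtimes}\widehat{\mathbb{H}}}$, exactly the kind of manipulation already performed in the proof of Proposition \ref{pro.BCKunneth}. The cleanest way to conclude is then to observe that the strong quantum BC property for $\widehat{\mathbb{G}}$ forces the Dirac homomorphism $u'\colon L(A)\to A$ to be a $\mathscr{KK}^{\widehat{\mathbb{G}}}$-equivalence (for a complementary pair one has $\mathscr{L}_{\widehat{\mathbb{G}}}\cap\mathscr{N}_{\widehat{\mathbb{G}}}=0$, so $A\in\mathscr{L}_{\widehat{\mathbb{G}}}$ kills $N(A)$); hence $u'\underset{r}{\rtimes}\widehat{\mathbb{G}}$ is a $KK$-equivalence, tensoring with $\mathrm{id}_{B\underset{r}{\rtimes}\widehat{\mathbb{H}}}$ keeps it one, and the left-hand map is an isomorphism on $K_*$. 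Alternatively --- and this is where the Künneth hypotheses $A\underset{r}{\rtimes}\widehat{\mathbb{G}}\in\mathcal{N}$ and $A\in\mathcal{N}_{\widehat{\mathbb{G}}}$ become the natural ingredients, in direct analogy with \cite[Proposition 4.10]{ChabertEchterhoffOyono} --- one flanks source and target by their Künneth short exact sequences (the source uses $L(A)\underset{r}{\rtimes}\widehat{\mathbb{G}}\in\mathcal{N}$, the target uses $A\underset{r}{\rtimes}\widehat{\mathbb{G}}\in\mathcal{N}$), notes that the outer vertical maps are $\eta^{\widehat{\mathbb{G}}}_A\otimes\mathrm{id}$ and $\mathrm{Tor}(\eta^{\widehat{\mathbb{G}}}_A,\mathrm{id})$, both isomorphisms since $\eta^{\widehat{\mathbb{G}}}_A$ is invertible, and invokes the five lemma. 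The \emph{in particular} is the instance $A=B=\mathbb{C}$: strong BC gives $L(\mathbb{C})\cong\mathbb{C}$, so $\mathbb{C}\in\mathcal{N}_{\widehat{\mathbb{G}}}\Leftrightarrow C(\mathbb{G})=\mathbb{C}\underset{r}{\rtimes}\widehat{\mathbb{G}}\in\mathcal{N}$, while $\mathbb{C}\in\mathscr{L}_{\widehat{\mathbb{H}}}$ is the stated hypothesis.

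For item $(ii)$ the plan is to apply Corollary \ref{cor.KunnethBC2}$(i)$ to the compact quantum group $\mathbb{F}$ itself, verifying its three hypotheses in turn. First, because both $\widehat{\mathbb{G}}$ and $\widehat{\mathbb{H}}$ satisfy the strong quantum BC property, Theorem \ref{theo.StrongBCDirectProd} (taken with $A=B=\mathbb{C}$) shows that $\widehat{\mathbb{F}}$ satisfies the quantum BC property with coefficients in $\mathbb{C}$. Second, $C(\mathbb{F})=C(\mathbb{G})\otimes C(\mathbb{H})$ lies in $\mathcal{N}$ by the stability of $\mathcal{N}$ under tensor products (Lemma \ref{lem.StabilityClassN}$(iii)$) applied to $C(\mathbb{G}),C(\mathbb{H})\in\mathcal{N}$. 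Third, $\mathbb{C}\in\mathcal{N}_{\widehat{\mathbb{F}}}$ is assumed. Corollary \ref{cor.KunnethBC2}$(i)$ then yields the quantum BC property for $\widehat{\mathbb{F}}$ with coefficients in every C$^*$-algebra carrying the trivial action, which is the assertion.

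The routine but delicate point --- the genuine obstacle --- is the bookkeeping in item $(i)$: one must check that, after applying $K_*$ and the identifications of Proposition \ref{pro.DirectProductTensorProduct}, the left-hand vertical arrow really is the exterior product $\eta^{\widehat{\mathbb{G}}}_A\underset{\mathbb{C}}{\otimes^{\text{ext}}}\mathrm{id}$ and that the whole square commutes on the nose, which rests on the compatibility of the exterior product with the Kasparov product recorded in Equation (\ref{eq.TensorProdKasparovProd}). This is the same naturality computation as in Proposition \ref{pro.BCKunneth}, so it can be imported rather than redone; once it is in place, both the $KK$-equivalence route and the five-lemma route close the argument immediately.
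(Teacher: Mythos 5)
Your proof is correct, and for item $(ii)$ it coincides with the paper's: Theorem \ref{theo.StrongBCDirectProd} gives the quantum BC property for $\widehat{\mathbb{F}}$ with coefficients in $\mathbb{C}$, Lemma \ref{lem.StabilityClassN} gives $C(\mathbb{F})=C(\mathbb{G})\otimes C(\mathbb{H})\in\mathcal{N}$, and Corollary \ref{cor.KunnethBC2}$(i)$ concludes. For item $(i)$, your \emph{alternative} (five-lemma) route is precisely the paper's proof: the author sandwiches $\eta^{\widehat{\mathbb{F}}}_{A\otimes B}$ between the Künneth sequences for $L'(A)\underset{r}{\rtimes}\widehat{\mathbb{G}}$ (exact because $A\in\mathcal{N}_{\widehat{\mathbb{G}}}$) and $A\underset{r}{\rtimes}\widehat{\mathbb{G}}$ (exact because $A\underset{r}{\rtimes}\widehat{\mathbb{G}}\in\mathcal{N}$), with outer vertical arrows built from $\eta^{\widehat{\mathbb{G}}}_A\otimes\eta^{\widehat{\mathbb{H}}}_B$, identifies the middle arrow with $\eta^{\widehat{\mathbb{F}}}_{A\otimes B}$ via Lemma \ref{lem.InvertibleElementDirectProduct}, and applies the five lemma. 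Your \emph{primary} route is genuinely different and shorter: since the strong quantum BC property for $\widehat{\mathbb{G}}$ forces $\mathscr{N}_{\widehat{\mathbb{G}}}=0$, the Dirac homomorphism $u'$ of $A$ is already a $KK^{\widehat{\mathbb{G}}}$-equivalence, hence $\mathcal{Z}_B(u')\underset{r}{\rtimes}\widehat{\mathbb{F}}$ is a $KK$-equivalence and the commutative square of Lemma \ref{lem.InvertibleElementDirectProduct}$(iii)$ makes $u\underset{r}{\rtimes}\widehat{\mathbb{F}}$ one as well. This is valid --- equivalently, $A\in\mathscr{L}_{\widehat{\mathbb{G}}}$ and $B\in\mathscr{L}_{\widehat{\mathbb{H}}}$ already give $A\otimes B\in\mathscr{L}_{\widehat{\mathbb{F}}}$ by Lemma \ref{lem.FunctorPreservingLDirectProduct}, so the Dirac homomorphism of $A\otimes B$ is invertible --- and it exposes the fact that, under the hypotheses \emph{as stated}, the Künneth assumptions $A\underset{r}{\rtimes}\widehat{\mathbb{G}}\in\mathcal{N}$ and $A\in\mathcal{N}_{\widehat{\mathbb{G}}}$ are not actually needed to reach the conclusion of item $(i)$. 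What the paper's five-lemma route buys is that it is the argument which would survive weakening ``strong BC for $\widehat{\mathbb{G}}$ plus $B\in\mathscr{L}_{\widehat{\mathbb{H}}}$'' to mere invertibility of $\eta^{\widehat{\mathbb{G}}}_A$ and $\eta^{\widehat{\mathbb{H}}}_B$, in the spirit of \cite[Theorem 5.3]{ChabertEchterhoffOyono}; in the present quantum formulation the stronger hypotheses are kept only because the identification of the middle arrow with $\eta^{\widehat{\mathbb{F}}}_{A\otimes B}$ is available solely through Lemma \ref{lem.InvertibleElementDirectProduct} (cf. Remark \ref{rem.FurtherHypoth2}). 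Your treatment of the ``in particular'' clause ($A=B=\mathbb{C}$, with $\mathbb{C}\in\mathcal{N}_{\widehat{\mathbb{G}}}\Leftrightarrow C(\mathbb{G})\in\mathcal{N}$ under strong BC) likewise matches the paper's discussion.
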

	\begin{proof}
		\begin{enumerate}[i)]
			\item Consider the following diagram:
			$$
	\xymatrix@C=5mm@!R=15mm{
		\mbox{$0$}\ar[r]&\mbox{$ K_*(L'(A)\underset{r}{\rtimes}\widehat{\mathbb{G}})\otimes K_*(L''(B)\underset{r}{\rtimes}\widehat{\mathbb{H}})$}\ar[r]^-{\mbox{$\alpha_{\widehat{\mathbb{G}}}$}}\ar[d]_{\mbox{$\eta^{\widehat{\mathbb{G}}}_{A}\otimes \eta^{\widehat{\mathbb{H}}}_{B}$}}
		&\mbox{$K_*(L(A\otimes B)\underset{r}{\rtimes}\widehat{\mathbb{F}})$}\ar[r]^-{\mbox{$\beta_{\widehat{\mathbb{G}}}$}}\ar[d]^{\mbox{$\eta^{\widehat{\mathbb{F}}}_{A\otimes B}$}}
		&\mbox{$\text{Tor}(K_*(L'(A)\underset{r}{\rtimes} \widehat{\mathbb{G}}), K_*(L''(B)\underset{r}{\rtimes}\widehat{\mathbb{H}}))\longrightarrow 0$}\ar[d]^{\mbox{$\text{Tor}(\eta^{\widehat{\mathbb{G}}}_{A}\otimes \eta^{\widehat{\mathbb{H}}}_{B}$)}}\\
		\mbox{$0$}\ar[r]&\mbox{$ K_*(A\underset{r}{\rtimes}\widehat{\mathbb{G}})\otimes K_*(B\underset{r}{\rtimes}\widehat{\mathbb{H}})$}\ar[r]_-{\mbox{$\alpha$}}
		&\mbox{$K_*((A\otimes B)\underset{r}{\rtimes}\widehat{\mathbb{F}})$}\ar[r]_-{\mbox{$\beta$}}
		&\mbox{$\text{Tor}(K_*(A\underset{r}{\rtimes}\widehat{\mathbb{G}}), K_*(B\underset{r}{\rtimes}\widehat{\mathbb{H}})) \longrightarrow 0$}}
	$$
			
	On the one hand, since $\widehat{\mathbb{G}}$ is assumed to satisfy the strong quantum BC property and $B\in\mathscr{L}_{\widehat{\mathbb{H}}}$, we can apply Lemma \ref{lem.InvertibleElementDirectProduct}. In particular, one has that $L'(A)\otimes L''(B)\cong L(A\otimes B)$ and the middle arrow is indeed $\eta^{\widehat{\mathbb{F}}}_{A\otimes B}$. On the other hand, $\eta^{\widehat{\mathbb{H}}}_{B}$ is an isomorphism (because  $B\cong L''(B)$) and $\eta^{\widehat{\mathbb{G}}}_A$ is an isomorphism (because $\widehat{\mathbb{G}}$ satisfies the strong quantum BC property). Consequently, the left arrow is an isomorphism hence so is the right arrow. Finally, the assumptions $A\underset{r}{\rtimes}\widehat{\mathbb{G}}\in\mathcal{N}$ and $A\in\mathcal{N}_{\widehat{\mathbb{G}}}$ say that the bottom and top lines of the diagram are exact sequences, respectively. In conclusion, the Five lemma yields that the middle arrow, i.e. $\eta^{\widehat{\mathbb{F}}}_{A\otimes B}$, is an isomorphism.

			\item Since $\widehat{\mathbb{G}}$ and $\widehat{\mathbb{H}}$ satisfy the strong quantum BC property by assumption, then Theorem \ref{theo.StrongBCDirectProd} yields that $\widehat{\mathbb{F}}$ satisfies the quantum BC property with coefficients in $A\otimes B$, for every $A\in \text{Obj}(\mathscr{KK}^{\widehat{\mathbb{G}}})$ and $B\in \text{Obj}(\mathscr{KK}^{\widehat{\mathbb{H}}})$. In particular, $\widehat{\mathbb{F}}$ satisfies the quantum BC property with coefficients in $\mathbb{C}$. Next, observe that $C(\mathbb{F})=C(\mathbb{G})\otimes C(\mathbb{H})\in\mathcal{N}$ because $\mathcal{N}$ is closed under taking tensor products (cf. Lemma \ref{lem.StabilityClassN}). Then the conclusion follows from item $(i)$ of Corollary \ref{cor.KunnethBC2}.
		\end{enumerate}
	\end{proof}
	
	\begin{remSec}\label{rem.FurtherHypoth2}
		Theorem \ref{theo.BCDirectProducts} above is analogue to \cite[Theorem 5.3]{ChabertEchterhoffOyono}. In the quantum setting we need however further hypothesis. Namely, we need $\mathbb{C}\in\mathcal{N}_{\widehat{\mathbb{G}}}$ as explained in Remark \ref{rem.FurtherHypoth} and $\mathbb{C}\in\mathscr{L}_{\widehat{\mathbb{H}}}$. The reason for the latter is again that we need some control for the $\mathscr{L}_{\widehat{\mathbb{H}}}$-simplicial approximations in relation with tensor products. This is made precise by Lemma \ref{lem.InvertibleElementDirectProduct} (and Remark \ref{rem.AssumptionStrongBC}), which we use in the proof of Theorem \ref{theo.BCDirectProducts}. In the classical setting, this control is guaranteed by \cite[Corollary 2.10]{ChabertEchterhoffOyono} based on their \emph{Going-Down technique}.
		
		Note that assuming that $\widehat{\mathbb{G}}$ satisfies the \emph{strong} quantum BC property guarantees the property $\mathbb{C}\in\mathcal{N}_{\widehat{\mathbb{G}}}$. Indeed, in this case one has in particular that $\mathbb{C}\in\mathscr{L}_{\widehat{\mathbb{G}}}$ hence $\mathbb{C}\cong L'(\mathbb{C})$. Therefore, by definition of our equivariant Künneth class, one has $\mathbb{C}\in\mathcal{N}_{\widehat{\mathbb{G}}}$ $\Leftrightarrow$ $C(\mathbb{G})\in\mathcal{N}$. The latter is our assumption in item $(i)$ of Theorem \ref{theo.BCDirectProducts}, which is the analogous assumption made in item $(i)$ of \cite[Theorem 5.3]{ChabertEchterhoffOyono}.
	\end{remSec}
	
\section{\textsc{Some K-theory computations}}\label{sec.KTheoryComp}

Recall that by Theorem \ref{theo.StrongBCDirectProd}, we have that if $\widehat{\mathbb{G}}$ and $\widehat{\mathbb{H}}$ satisfy the strong quantum BC property, then $\widehat{\mathbb{G}\times \mathbb{H}}$ satisfies the strong quantum BC property with coefficients in $A\otimes B$, for all $A\in \text{Obj}(\mathscr{KK}^{\widehat{\mathbb{G}}})$ and $B\in \text{Obj}(\mathscr{KK}^{\widehat{\mathbb{H}}})$. In order to compute the K-theory groups of $C(\mathbb{G}\times\mathbb{H})$ by applying the homological techniques from the Meyer-Nest work, we would need further information concerning the torsion phenomenon of $\widehat{\mathbb{F}}$. Namely, a classification of torsion actions of a quantum direct product, which seems hard in general as explained in the introduction. If such a classification is provided, then it is plausible to construct explicit projective resolutions for $\mathbb{C}$ in $\mathscr{KK}^{\widehat{\mathbb{G}\times \mathbb{H}}}$ as tensor products of projective resolutions for $\mathbb{C}$ in $\mathscr{KK}^{\widehat{\mathbb{G}}}$ and in $\mathscr{KK}^{\widehat{\mathbb{H}}}$. Note that, since $C(\mathbb{G}\times\mathbb{H})=C(\mathbb{G})\otimes C(\mathbb{H})$, we need either $C(\mathbb{G})$ or $C(\mathbb{H})$ to satisfy the Künneth formula to succeed in such a construction.

However, instead of doing that, we can compute $K_*(C(\mathbb{G}\times\mathbb{H}))$ using simply the Künneth formula. In order to do so, let us point out that \cite[Theorem 5.2]{YukiBCTorsion} and \cite[Corollary 5.5]{YukiBCTorsion} can be also obtained for the Künneth class instead of the bootstrap class. This is true because even if these two classes are not the same, they satisfy similar stabilising properties in the sense of Lemma \ref{lem.StabilityClassN}. Namely, $\mathcal{N}$ contains all finite dimensional C$^*$-algebras, it is closed under tensor products, it is closed under semi-split extensions (hence under mapping cones and homotopy limits) and it contains all type I C$^*$-algebras. In other words, the arguments in \cite[Theorem 5.2]{YukiBCTorsion} and \cite[Corollary 5.5]{YukiBCTorsion} can be applied \emph{verbatim} by replacing the bootstrap class by $\mathcal{N}$ and we obtain the following:
\begin{theoSec}\label{theo.QuantumGroupCalgebraKunneth}
	Let $\mathbb{G}$ be a compact quantum group. 
	\begin{enumerate}[i)]
		\item If $(B, \beta)$ is a $\mathbb{G}$-C$^*$-algebra such that $B\underset{r, \beta}{\rtimes}\mathbb{G}\underset{r, \overline{\delta}}{\ltimes}T^{op}\in\mathcal{N}$, for all torsion action $(T, \delta)\in\text{Tor}(\widehat{\mathbb{G}})$; then $\widehat{L}(B)\in\mathcal{N}$.
		\item Assume that $\widehat{\mathbb{G}}$ satisfies the strong quantum BC property. If $(A, \alpha)$ is a type $I$ $\widehat{\mathbb{G}}$-C$^*$-algebra, then $A\underset{r, \alpha}{\rtimes}\widehat{\mathbb{G}}\in\mathcal{N}$. In particular, $C(\mathbb{G})\in\mathcal{N}$.
	\end{enumerate}
\end{theoSec}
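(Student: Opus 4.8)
The plan is to follow the proofs of \cite[Theorem 5.2]{YukiBCTorsion} and \cite[Corollary 5.5]{YukiBCTorsion} essentially verbatim, the only change being that the bootstrap class is replaced throughout by the Künneth class $\mathcal{N}$. This is legitimate because, by Lemma \ref{lem.StabilityClassN} and the discussion preceding it, $\mathcal{N}$ enjoys exactly those stability properties of the bootstrap class that are used in \emph{loc.\ cit.}: it contains all type $I$ (in particular all finite dimensional) C$^*$-algebras, and it is closed under $KK$-equivalence, minimal tensor products, semi-split extensions, suspensions, mapping cones and countable inductive limits with injective connecting maps (hence under the countable homotopy colimits that realise simplicial approximations). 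I would make these the standing facts and then reproduce the two arguments; throughout, $\mathcal{N}$-membership of an object of $\mathscr{KK}^{\mathbb{G}}$ is understood for its underlying C$^*$-algebra, i.e.\ after applying the triangulated, direct-sum-preserving functor $\text{Res}^{\mathbb{G}}_{\mathbb{E}}:\mathscr{KK}^{\mathbb{G}}\rightarrow\mathscr{KK}$.

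For item $(i)$ I would use the Meyer--Nest description (cf.\ \cite{MeyerNest}) of $\widehat{L}(B)$ as a $\widehat{\mathscr{L}}_{\widehat{\mathbb{G}}}$-simplicial approximation. Since $\tau_T$ is left adjoint to $j_{\mathbb{G},T}$, the approximation is built from a relative projective resolution whose stages are $P_n=\bigoplus_{[T]\in\text{Tor}(\widehat{\mathbb{G}})}\tau_T\big(j_{\mathbb{G},T}(N_{n-1})\big)$, where $N_{-1}:=B$, the map $P_n\rightarrow N_{n-1}$ is induced by the counits, and $N_n$ is its cone; then $\widehat{L}(B)$ is the homotopy colimit of the associated partial realisations. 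The inductive device is the class $\mathscr{C}:=\{C\in\text{Obj}(\mathscr{KK}^{\mathbb{G}})\ |\ j_{\mathbb{G},T}(C)\in\mathcal{N}\text{ for all }T\in\text{Tor}(\widehat{\mathbb{G}})\}$. Using that each $j_{\mathbb{G},S}$ is triangulated and commutes with direct sums, together with the key identity $j_{\mathbb{G},S}\big(\tau_T(C)\big)\cong C\otimes F_{T,S}$ where $F_{T,S}:=j_{\mathbb{G},S}(\tau_T(\mathbb{C}))=T\underset{r}{\rtimes}\mathbb{G}\ltimes S^{op}$ is \emph{finite dimensional} (the isomorphism follows from Proposition \ref{pro.TwoSidedDirectProd} applied to the trivially acted factor $C$, and the finite dimensionality from the torsion theory of \cite{YukiKenny}), one checks that $\mathscr{C}$ is closed under suspensions, cones and countable direct sums, and that it contains every $\tau_T(C)$ with $C\in\mathcal{N}$. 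Since $B\in\mathscr{C}$ by hypothesis, an induction gives $N_n\in\mathscr{C}$ for all $n$; hence the coefficients $j_{\mathbb{G},T}(N_{n-1})$ lie in $\mathcal{N}$, and the underlying algebra of each $P_n$, a countable direct sum of the $j_{\mathbb{G},T}(N_{n-1})\otimes T$ with $T$ finite dimensional, lies in $\mathcal{N}$. Propagating through the cones and the terminal homotopy colimit yields $\widehat{L}(B)\in\mathcal{N}$.

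For item $(ii)$ the strategy is to reduce to $(i)$ via Baaj--Skandalis duality. Put $B:=A\underset{r,\alpha}{\rtimes}\widehat{\mathbb{G}}$, viewed as a $\mathbb{G}$-C$^*$-algebra for the dual action. The strong quantum BC property $\mathscr{KK}^{\widehat{\mathbb{G}}}=\mathscr{L}_{\widehat{\mathbb{G}}}$ transports, under the equivalence $\mathscr{KK}^{\mathbb{G}}\cong\mathscr{KK}^{\widehat{\mathbb{G}}}$ of Baaj--Skandalis and the identity $\widehat{\mathscr{L}}_{\widehat{\mathbb{G}}}\rtimes\mathbb{G}=\mathscr{L}_{\widehat{\mathbb{G}}}$, to $\mathscr{KK}^{\mathbb{G}}=\widehat{\mathscr{L}}_{\widehat{\mathbb{G}}}$, so that $\widehat{L}(B)\cong B$. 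It therefore suffices, by $(i)$, to verify that $j_{\mathbb{G},T}(B)=(A\underset{r,\alpha}{\rtimes}\widehat{\mathbb{G}})\underset{r}{\rtimes}\mathbb{G}\underset{r,\overline{\delta}}{\ltimes}T^{op}\in\mathcal{N}$ for every torsion action $(T,\delta)$. Here I would invoke the biduality (Takesaki--Takai type) computation showing that $(A\underset{r,\alpha}{\rtimes}\widehat{\mathbb{G}})\underset{r}{\rtimes}\mathbb{G}\ltimes T^{op}$ is Morita equivalent to $A$ tensored with a finite dimensional C$^*$-algebra; this is type $I$ since $A$ is, hence lies in $\mathcal{N}$. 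Applying $(i)$ then gives $A\underset{r,\alpha}{\rtimes}\widehat{\mathbb{G}}=\widehat{L}(B)\in\mathcal{N}$, and taking $A=\mathbb{C}$ with trivial action, together with $C(\mathbb{G})=\mathbb{C}\underset{r}{\rtimes}\widehat{\mathbb{G}}$, yields $C(\mathbb{G})\in\mathcal{N}$.

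I expect the main obstacle to be the bookkeeping in item $(i)$: establishing the identity $j_{\mathbb{G},S}(\tau_T(C))\cong C\otimes F_{T,S}$ with $F_{T,S}$ finite dimensional, which is precisely what forces $\mathcal{N}$-membership to propagate along the resolution, and checking carefully that $\mathcal{N}$ survives the particular countable homotopy colimit realising $\widehat{L}(B)$ (so that the partial-realisation estimates pass to the limit). In item $(ii)$ the only genuine input beyond $(i)$ is the duality identification of the two-sided crossed product $(A\rtimes\widehat{\mathbb{G}})\rtimes\mathbb{G}\ltimes T^{op}$, up to Morita equivalence, with a type $I$ algebra; granting this, the reduction is immediate. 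Both inputs are exactly the points exploited in \cite{YukiBCTorsion}, and substituting $\mathcal{N}$ for the bootstrap class changes nothing in the reasoning.
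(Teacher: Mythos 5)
Your proposal is correct and follows exactly the route the paper takes: the paper's ``proof'' is precisely the observation that the arguments of \cite[Theorem 5.2]{YukiBCTorsion} and \cite[Corollary 5.5]{YukiBCTorsion} go through verbatim once the bootstrap class is replaced by $\mathcal{N}$, justified by the stability properties of Lemma \ref{lem.StabilityClassN} (closure under $KK$-equivalence, semi-split extensions, tensor products, inductive limits, and containment of all type $I$ algebras). Your additional unpacking of the simplicial-approximation induction for $(i)$ and the Baaj--Skandalis/Takesaki--Takai reduction for $(ii)$ is consistent with, and more detailed than, what the paper records.
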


\begin{remSec}\label{rem.AnalogoyThm01}
	On the one hand, as we have explained in the introduction, the family of finite subgroups represents the torsion phenomenon for a classical discrete group. In the quantum setting it must be replaced by the family of torsion actions of a compact quantum group. This makes a substantial difference when it comes to define the quantum assembly map for a discrete quantum group. In particular, the induction functor must be replaced by the two-sided crossed product functor as explained in Section \ref{sec.QuantumBC}. In particular, we work in the category $\mathscr{KK}^{\mathbb{G}}$ and not in $\mathscr{KK}^{\widehat{\mathbb{G}}}$ itself. If $G$ is a discrete group, then \cite[Theorem 0.1]{ChabertEchterhoffOyono} gives a sufficient condition for a $G$-C$^*$-algebra to be in $\mathcal{N}_G$ in terms of the torsion of $G$. In this sense, item $(i)$ of Theorem \ref{theo.QuantumGroupCalgebraKunneth} gives a sufficient condition for the \emph{$\widehat{\mathscr{L}}_{\widehat{\mathbb{G}}}$-simplicial approximation} of a $\mathbb{G}$-C$^*$-algebra to be in $\mathcal{N}$ in terms of the torsion actions of $\mathbb{G}$. The fact that we work in $\mathscr{KK}^{\mathbb{G}}$ leads to consider \emph{dual} (in the sense of the Baaj-Skandalis duality) simplicial approximations, which would lead to a \emph{dual} equivariant Künneth class. However, the Künneth class $\mathcal{N}$ is not stable under general crossed products hence it is not clear whether item $(i)$ of Theorem \ref{theo.QuantumGroupCalgebraKunneth} translates into a statement about $\mathcal{N}_{\widehat{\mathbb{G}}}$.
	
	On the other hand, the conclusion of item $(ii)$ of Theorem \ref{theo.QuantumGroupCalgebraKunneth}, i.e. that $C(\mathbb{G})\in\mathcal{N}$ as soons as $\widehat{\mathbb{G}}$ satisfies the strong quantum BC property is also true for classical locally compact groups. One can argue as follows. Assume that $G$ is a locally compact group satisfying the BC property with coefficients (\emph{a fortriori} when $G$ satisfies the \emph{strong} BC property). As explained in Remark \ref{rem.FurtherHypoth}, we always have $\mathbb{C}\in\mathcal{N}_G$. Therefore, \cite[Proposition 4.9]{ChabertEchterhoffOyono} implies that $C^*(G)=\mathbb{C}\underset{r}{\rtimes} G\in\mathcal{N}$.
\end{remSec}

By the work of Voigt and Vergnioux-Voigt (e.g. \cite{VoigtBaumConnesFree}, \cite{VoigtBaumConnesUnitaryFree}, \cite{VoigtBaumConnesAutomorphisms}) we have a number of examples of compact quantum groups with duals satisfying the strong quantum BC property. Hence the C$^*$-algebras defining these compact quantum groups lie in $\mathcal{N}$. Namely:
\begin{corSec}\label{cor.QGKunnetClass}
	Let $\mathbb{G}$ be any of the following compact quantum groups: $SU_q(2)$, $O^+(F)$, $U^+(Q)$, $S^+_N$, where $N\in\mathbb{N}$, $Q$ is a complex invertible matrix and $F$ is a complex invertible matrix such that $F\overline{F}\in \mathbb{R} id$. Then $C(\mathbb{G})\in\mathcal{N}$.
\end{corSec}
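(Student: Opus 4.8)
The plan is to derive the statement as an immediate consequence of item $(ii)$ of Theorem \ref{theo.QuantumGroupCalgebraKunneth}. The key point recorded there is that if $\widehat{\mathbb{G}}$ satisfies the strong quantum BC property, then $A\underset{r}{\rtimes}\widehat{\mathbb{G}}\in\mathcal{N}$ for every type I $\widehat{\mathbb{G}}$-C$^*$-algebra $A$, and in particular $C(\mathbb{G})=\mathbb{C}\underset{r}{\rtimes}\widehat{\mathbb{G}}\in\mathcal{N}$ (here $\mathbb{C}$, being one-dimensional, is trivially type I). Thus the entire content of the corollary reduces to verifying, case by case, that the duals of the listed compact quantum groups satisfy the strong quantum BC property.

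First I would treat the torsion-free families. For $\widehat{SU_q(2)}$ and $\widehat{O^+(F)}$ (with $F\overline{F}\in\mathbb{R}\,id$) the strong quantum BC property is established by Voigt in \cite{VoigtBaumConnesFree}; for $\widehat{U^+(Q)}$ it follows from the work of Vergnioux-Voigt in \cite{VoigtBaumConnesUnitaryFree}. In each of these cases the discrete dual is torsion-free, so the relevant complementary pair is the one described in Section \ref{sec.QuantumBC} for torsion-free discrete quantum groups, and the strong property amounts to the existence of a $\gamma$-element equal to $\mathbbold{1}_{\mathbb{C}}$, which is exactly what is proven there. Feeding this into Theorem \ref{theo.QuantumGroupCalgebraKunneth}$(ii)$ gives $C(\mathbb{G})\in\mathcal{N}$ for these three families.

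The remaining case $S^+_N$ requires slightly more attention because $\widehat{S^+_N}$ is \emph{not} torsion-free: as recalled in the introduction, its only non-trivial torsion action (up to equivariant Morita equivalence) is the structural action of $S^+_N$ on $\mathbb{C}^N$. Here I would invoke \cite{VoigtBaumConnesAutomorphisms}, where the strong quantum BC property for $\widehat{S^+_N}$ is established with respect to the complementary pair $(\mathscr{L}_{\widehat{S^+_N}},\mathscr{N}_{\widehat{S^+_N}})$ built from this torsion structure. Once the strong property is in hand, a final application of Theorem \ref{theo.QuantumGroupCalgebraKunneth}$(ii)$ yields $C(\mathbb{G})\in\mathcal{N}$ in this case as well, completing all four cases.

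The main obstacle is not in the present argument but is entirely outsourced to the cited references: the verification of the strong quantum BC property (equivalently, the construction of a $\gamma$-element equal to the identity) for these quantum groups. I expect the only delicate bookkeeping on our side to be ensuring that the cited results indeed provide the \emph{strong} form of the property rather than merely the BC property with coefficients --- in particular for $S^+_N$, where the presence of non-trivial torsion means one must work with the correct torsion-aware complementary pair rather than the one for the torsion-free setting.
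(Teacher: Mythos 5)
Your proposal is correct and follows essentially the same route as the paper: the corollary is deduced by combining item $(ii)$ of Theorem \ref{theo.QuantumGroupCalgebraKunneth} (which gives $C(\mathbb{G})=\mathbb{C}\underset{r}{\rtimes}\widehat{\mathbb{G}}\in\mathcal{N}$ once the strong quantum BC property holds for $\widehat{\mathbb{G}}$) with the results of Voigt and Vergnioux--Voigt establishing that property for the duals of the listed quantum groups. Your additional care in distinguishing the torsion-free cases from $S^+_N$, where the torsion-aware complementary pair must be used, is a welcome precision but does not change the argument.
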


Moreover, a computation of the K-theory groups of the C$^*$-algebras $C(\mathbb{G})$ is carried out too in the works \cite{VoigtBaumConnesFree}, \cite{VoigtBaumConnesUnitaryFree}, \cite{VoigtBaumConnesAutomorphisms}. Namely:
\begin{theoSec}\label{theo.KtheoryComp}
	Let $N\in\mathbb{N}$, let $Q$ be a complex invertible matrix and let $F$ be a complex invertible matrix such that $F\overline{F}\in \mathbb{R} id$. Then:
	\begin{itemize}[-]
		\item $K_0(C(SU_q(2)))=\mathbb{Z}$ and $K_1(C(SU_q(2)))=\mathbb{Z}$.
		\item $K_0(C(O^+(F)))=\mathbb{Z}$ and $K_1(C(O^+(F)))=\mathbb{Z}$.
		\item $K_0(C(U^+(Q)))=\mathbb{Z}$ and $K_1(C(U^+(Q)))=\mathbb{Z}\oplus\mathbb{Z}$.
		\item $K_0(C(S^+_N))=\mathbb{Z}^{N^2-2N+2}$ and $K_1(C(S^+_N))=\mathbb{Z}$.
	\end{itemize}
\end{theoSec}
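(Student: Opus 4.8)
The statement is a recollection of the K-theory computations performed by Voigt and Vergnioux--Voigt in \cite{VoigtBaumConnesFree}, \cite{VoigtBaumConnesUnitaryFree} and \cite{VoigtBaumConnesAutomorphisms}; the plan is to describe the common mechanism and to indicate where the genuine work lies. For each quantum group in the list the dual $\widehat{\mathbb{G}}$ satisfies the strong quantum BC property, which is precisely the input recalled in Corollary \ref{cor.QGKunnetClass}. Hence $\mathscr{KK}^{\widehat{\mathbb{G}}}=\mathscr{L}_{\widehat{\mathbb{G}}}$, so that $\mathbb{C}\in\mathscr{L}_{\widehat{\mathbb{G}}}$ and $L(\mathbb{C})\cong\mathbb{C}$. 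Since $C(\mathbb{G})=\mathbb{C}\underset{r}{\rtimes}\widehat{\mathbb{G}}$, we have $K_*(C(\mathbb{G}))=F_*(\mathbb{C})$, and this group can be read off from any projective resolution of $\mathbb{C}$ in the localising subcategory $\mathscr{L}_{\widehat{\mathbb{G}}}$ by applying the homological functor $F_*$ to the resulting distinguished triangles.

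First I would dispatch the three torsion-free cases together, observing that $SU_q(2)$ is itself a free orthogonal quantum group $O^+(F)$ (with $F\overline{F}\in\mathbb{R}\,id$), so the first two lines are instances of one computation. Here $\widehat{\mathbb{G}}$ is torsion-free, hence $\mathscr{L}_{\widehat{\mathbb{G}}}$ is generated by the single object $c_0(\widehat{\mathbb{G}})=\text{Ind}^{\widehat{\mathbb{G}}}_{\mathbb{E}}(\mathbb{C})$, and the fusion structure (the representation semiring of $SU(2)$ in the orthogonal case, a free analogue in the unitary case) furnishes an explicit resolution of length one
$$0\longrightarrow P_1\longrightarrow P_0\longrightarrow \mathbb{C}\longrightarrow 0$$
with $P_0,P_1$ built from $c_0(\widehat{\mathbb{G}})$ and finite-dimensional coefficients. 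Applying $F_*$ together with the Green-Julg identification $F_*(c_0(\widehat{\mathbb{G}})\otimes C)\cong K_*(C)$ turns this into a six-term exact sequence whose only boundary map is the single connecting $KK$-element of the resolution; computing $K_0$ and $K_1$ then reduces to determining the kernel and cokernel of that one map, which yields the extra summand $\mathbb{Z}$ in degree one for the free unitary group $U^+(Q)$ and single copies of $\mathbb{Z}$ in the orthogonal case.

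The remaining case $S^+_N$ is different in nature because $\widehat{S^+_N}$ is \emph{not} torsion-free: up to equivariant Morita equivalence its only non-trivial torsion action is the structural action of $S^+_N$ on $\mathbb{C}^N$. Thus $\mathscr{L}_{\widehat{S^+_N}}$ is generated by the induced pieces coming from both the trivial and this structural torsion action, and the projective resolution of $\mathbb{C}$ acquires an extra generator accordingly. Applying $F_*$ now produces a chain complex whose homology is the prescribed $\mathbb{Z}^{N^2-2N+2}$ in degree zero and $\mathbb{Z}$ in degree one.

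The main obstacle, in every case, is the explicit evaluation of the boundary maps of these exact sequences: one has to express the $KK$-classes of the resolution differentials through the fusion data (tensor-product multiplicities, quantum dimensions, the relevant $R$-matrix) and, for $S^+_N$, to track the contribution of the non-trivial torsion action. This is the technical core carried out in \cite{VoigtBaumConnesFree}, \cite{VoigtBaumConnesUnitaryFree} and \cite{VoigtBaumConnesAutomorphisms}, and it is this computation --- rather than the formal reduction above, which is guaranteed by the strong quantum BC property --- that produces the stated groups.
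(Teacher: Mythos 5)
Your proposal is correct and matches the paper, which states this theorem purely as a recollection of the computations in \cite{VoigtBaumConnesFree}, \cite{VoigtBaumConnesUnitaryFree} and \cite{VoigtBaumConnesAutomorphisms} and offers no proof of its own. Your sketch of the mechanism (strong quantum BC property, projective resolutions in $\mathscr{L}_{\widehat{\mathbb{G}}}$, the Green--Julg identification, and the special treatment of the non-trivial torsion action of $S^+_N$) is an accurate description of how those references obtain the stated groups, and you correctly locate the genuine work in the evaluation of the boundary maps carried out there.
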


These computations together with Theorem \ref{theo.QuantumGroupCalgebraKunneth} allow to compute the K-theory groups for the corresponding quantum direct products by means of the Künneth formula. More precisely, if $\mathbb{G}$ and $\mathbb{H}$ are any of the compact quantum groups $SU_q(2)$, $O^+(F)$, $U^+(Q)$ or $S^+_N$ as above, then it follows from Theorem \ref{theo.KtheoryComp} that $K_*(C(\mathbb{H}))$ is free abelian. Moreover, $C(\mathbb{G})\in\mathcal{N}$ by Corollary \ref{cor.QGKunnetClass}. Therefore, we have $K_*(C(\mathbb{F}))=K_*(C(\mathbb{G})\otimes C(\mathbb{H}))=K_*(C(\mathbb{G}))\otimes K_*(C(\mathbb{H}))$, where $\mathbb{F}=\mathbb{G}\times \mathbb{H}$. Recall that $K_*(\ \cdot\ )$ denotes the $\mathbb{Z}/2$-graded $K$-theory hence $K_l(C(\mathbb{F}))=\underset{\underset{i,j\in\{0,1\}}{i+j=l}}{\bigoplus}K_i(C(\mathbb{G}))\otimes K_j(C(\mathbb{H}))$, for all $l\in\{0,1\}$. Namely, we have the following:
\begin{theoSec}
	Let $N\in\mathbb{N}$, let $Q$ be a complex invertible matrix and let $F$ be a complex invertible matrix such that $F\overline{F}\in \mathbb{R} id$. Then:
	\begin{itemize}[-]
		\item For $\mathbb{F}:=O^+(F)\times O^+(F)$, we have $K_0(C(\mathbb{F}))=\mathbb{Z}^2$ and $K_1(C(\mathbb{F}))=\mathbb{Z}^2$.
		\item For $\mathbb{F}:=O^+(F)\times U^+(Q)$, we have $K_0(C(\mathbb{F}))=\mathbb{Z}^3$ and $K_1(C(\mathbb{F}))=\mathbb{Z}^3$.
		\item For $\mathbb{F}:=O^+(F)\times S^+_N$, we have $K_0(C(\mathbb{F}))=\mathbb{Z}^{N^2-2N+3}$ and $K_1(C(\mathbb{F}))=\mathbb{Z}^{N^2-2N+3}$.
		\item For $\mathbb{F}:=U^+(Q)\times U^+(Q)$, we have $K_0(C(\mathbb{F}))=\mathbb{Z}^5$ and $K_1(C(\mathbb{F}))=\mathbb{Z}^4$.
		\item For $\mathbb{F}:=U^+(Q)\times S^+_N$, we have $K_0(C(\mathbb{F}))=\mathbb{Z}^{N^2-2N+4}$ and $K_1(C(\mathbb{F}))=\mathbb{Z}^{2N^2-4N+5}$.
		\item For $\mathbb{F}:=S^+_N\times S^+_N$, we have $K_0(C(\mathbb{F}))=\mathbb{Z}^{(N^2-2N+2)^2+1}$ and $K_1(C(\mathbb{F}))=\mathbb{Z}^{2(N^2-2N+2)}$.
		
	\end{itemize}
\end{theoSec}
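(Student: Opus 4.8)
The plan is to reduce every case to a single application of the Künneth formula, since the two substantive ingredients are already in place. I would begin by recalling from Section~\ref{sec.QDP} that the quantum direct product satisfies $C(\mathbb{F})=C(\mathbb{G})\otimes C(\mathbb{H})$ for $\mathbb{F}=\mathbb{G}\times\mathbb{H}$, so that computing $K_*(C(\mathbb{F}))$ amounts to computing the K-theory of the minimal tensor product $C(\mathbb{G})\otimes C(\mathbb{H})$.

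The key step is to invoke the Künneth sequence of Examples~\ref{exs.KunnethClasses}. By Corollary~\ref{cor.QGKunnetClass} the defining algebra $C(\mathbb{G})$ lies in the Künneth class $\mathcal{N}$ for each of the quantum groups $O^+(F)$, $U^+(Q)$ and $S^+_N$; hence Theorem~\ref{theo.AbstractKunnethTheorem}, applied to the Künneth functor $\kappa_*=K_*(C(\mathbb{G})\otimes(\,\cdot\,))$, furnishes the exact sequence
$$0\longrightarrow K_*(C(\mathbb{G}))\otimes K_*(C(\mathbb{H}))\overset{\alpha}{\longrightarrow} K_*(C(\mathbb{F}))\overset{\beta}{\longrightarrow} \text{Tor}(K_*(C(\mathbb{G})), K_*(C(\mathbb{H})))\longrightarrow 0.$$
Reading off Theorem~\ref{theo.KtheoryComp}, every K-group appearing is free abelian; in particular $K_*(C(\mathbb{H}))$ is free, so the $\text{Tor}$-term vanishes and $\alpha$ becomes an isomorphism. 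This yields the Künneth formula $K_*(C(\mathbb{F}))\cong K_*(C(\mathbb{G}))\otimes K_*(C(\mathbb{H}))$ of $\mathbb{Z}/2$-graded abelian groups.

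What remains is purely combinatorial. Unfolding the grading gives, for $l\in\{0,1\}$ with indices read modulo $2$,
$$K_l(C(\mathbb{F}))\cong\bigoplus_{i+j=l}K_i(C(\mathbb{G}))\otimes K_j(C(\mathbb{H})).$$
Substituting the values $K_0(C(O^+(F)))=K_1(C(O^+(F)))=\mathbb{Z}$, $K_0(C(U^+(Q)))=\mathbb{Z}$, $K_1(C(U^+(Q)))=\mathbb{Z}^2$, $K_0(C(S^+_N))=\mathbb{Z}^{N^2-2N+2}$ and $K_1(C(S^+_N))=\mathbb{Z}$ from Theorem~\ref{theo.KtheoryComp}, and using $\mathbb{Z}^a\otimes\mathbb{Z}^b\cong\mathbb{Z}^{ab}$, each of the six identities follows by a direct count. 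For instance, for $\mathbb{F}=S^+_N\times S^+_N$ one obtains $K_0(C(\mathbb{F}))\cong\mathbb{Z}^{(N^2-2N+2)^2}\oplus\mathbb{Z}\cong\mathbb{Z}^{(N^2-2N+2)^2+1}$ and $K_1(C(\mathbb{F}))\cong\mathbb{Z}^{N^2-2N+2}\oplus\mathbb{Z}^{N^2-2N+2}\cong\mathbb{Z}^{2(N^2-2N+2)}$, matching the stated formulas; the other five cases are handled identically.

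Since the genuine analytic input---membership of $C(\mathbb{G})$ in $\mathcal{N}$ and the individual K-theory computations---is already supplied by Corollary~\ref{cor.QGKunnetClass} and Theorem~\ref{theo.KtheoryComp}, there is no substantial obstacle to overcome here. The only point requiring a moment's care is verifying that freeness holds on at least one factor so that the $\text{Tor}$-contribution drops out; this is immediate from the explicit groups above. Everything else is the routine bookkeeping of the six product cases.
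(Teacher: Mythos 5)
Your proposal is correct and follows exactly the paper's own route: identify $C(\mathbb{F})=C(\mathbb{G})\otimes C(\mathbb{H})$, invoke $C(\mathbb{G})\in\mathcal{N}$ from Corollary \ref{cor.QGKunnetClass} together with the freeness of $K_*(C(\mathbb{H}))$ from Theorem \ref{theo.KtheoryComp} to kill the $\text{Tor}$-term in the Künneth sequence, and then unfold the $\mathbb{Z}/2$-grading to count ranks. All six numerical cases check out, so there is nothing to add.
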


	Of course, a similar list of K-theory groups can be obtained with other combinations of compact quantum groups into a quantum direct product as soon as the strong quantum BC property is satisfied and the K-groups of the C$^*$-algebras defining the quantum groups involved are free abelian. Notice that, in order to apply the Künneth formula, the computation of the K-groups of the C$^*$-algebra defining $\mathbb{G}\times \mathbb{H}$ requires at least $K_*(C(\mathbb{H}))$ to be free abelian.
	
	For instance, let $\mathbb{G}$ be a compact quantum group, $N\geq 4$ and $\mathbb{G}\wr_*S^+_N$ the corresponding free wreath product of $\mathbb{G}$ by $S^+_N$. A result by F. Lemeux and P. Tarrago (cf. \cite{TarragoWreath}) shows that there exists a parameter $q\in [-1,1]$ such that the compact quantum group $\mathbb{H}_q$ is monoidal equivalent to $\mathbb{G}\wr_*S^+_N$, where $\mathbb{H}_q$ is such that $\widehat{\mathbb{H}}_q$ is the discrete quantum subgroup of $\widehat{\mathbb{G} * SU_q(2)}$ generated, in the sense of the Tannaka-Krein duality, by the representations $xux$ (as a word in $\text{Irr}(\mathbb{G} * SU_q(2))$) with $x\in\text{Irr}(\mathbb{G})$ and $u$ being the fundamental representation of $SU_q(2)$. It is shown in \cite{RubenAmauryTorsion} that $\widehat{\mathbb{G}\wr_*S^+_N}$ (hence $\widehat{\mathbb{H}}_q$) satisfies the strong quantum BC property as soon as $\widehat{\mathbb{G}}$ is torsion-free and satisfies the strong quantum BC property. 
	
	The K-theory of $C(\mathbb{H}_q)$ is computed in \cite{RubenAmauryTorsion} for concrete and relevant instances of $\mathbb{G}$. For example, when $\widehat{\mathbb{G}}:=\mathbb{F}_n$ is the classical free group with $n\in\mathbb{N}$ generators, then $K_0(C(\mathbb{H}_q))=\mathbb{Z}\oplus \mathbb{Z}_2$ and $K_1(C(\mathbb{H}_q))=\mathbb{Z}^{n+1}$. In this case, we see that the K-groups are \emph{not} free abelian, so that the Künneth formula cannot be applied to compute the K-theory of quantum direct products of the form $\mathbb{X}\times \mathbb{H}_q$, but it still applies for quantum direct products of the form $\mathbb{H}_q \times \mathbb{X}$ for $\mathbb{X}$ any compact quantum group satisfying the strong quantum BC property and such that $K_*(C(\mathbb{X}))$ is free abelian.
	
	The K-theory of $C(\mathbb{G}\wr_*S^+_N)$ is computed in the recent paper \cite{FimaTroupelCouronne} by P. Fima and A. Troupel for concrete and relevant instances of $\mathbb{G}$. For example, when $\widehat{\mathbb{G}}:=\mathbb{F}_n$ is the classical free group with $n\in\mathbb{N}$ generators, then $K_0(C(\widehat{\mathbb{F}}_n\wr_*S^+_N))=\mathbb{Z}^{N^2-2N+2}$ and $K_1(C(\widehat{\mathbb{F}}_n\wr_*S^+_N))=\mathbb{Z}^{N^2n+1}$ (cf. \cite[Corollary 7.2]{FimaTroupelCouronne}). In this case, the K-groups are free abelian.

\bibliographystyle{acm}
\bibliography{KunnethClassQuantumGroups}

\vspace{1cm}
\textsc{R. Martos, Department of Mathematical Sciences, University of Copenhagen, Denmark.} 

\textit{E-mail address:} \textbf{\texttt{ruben.martos@math.ku.dk}}

 \end{document}